\definecolor{webgreen}{rgb}{0,.5,0}
\definecolor{webbrown}{rgb}{.8,0,0}
\definecolor{emphcolor}{rgb}{0.95,0.95,0.95}
\ifpdf \hypersetup{pdftex,
            pdfstartview=FitH, 
            bookmarksopen=true,
            bookmarksnumbered=true
} \else \hypersetup{dvips} \fi
\newcommand{\lapinv}{\Phi(q)}
\newcommand {\B}{\mathcal{B}}
\numberwithin{equation}{section}
\newtheorem{theorem}{Theorem}[section]
\newtheorem{proposition}{Proposition}[section]
\newtheorem{remark}{Remark}[section]
\newtheorem{lemma}{Lemma}[section]
\newtheorem{assump}{Assumption}[section]
\newtheorem{definition}{Definition}[section]
\numberwithin{remark}{section} \numberwithin{proposition}{section}
\numberwithin{corollary}{section}
\newcommand {\R}{\mathbb{R}}
\newcommand {\p}{\mathbb{P}}
\newcommand {\E}{\mathbb{E}}
\newcommand{\diff}{{\rm d}}
\newcommand{\lev}{L\'{e}vy }
\title[Doubly reflected L\'EVY processes in singular control]{Optimality of doubly reflected L\'EVY processes in singular control}
\thanks{This version: \today. }
\thanks{$*$\, Department of Statistics, London School of Economics, Houghton Street, London, WC2A 2AE, UK. Email: E.J.Baurdoux@lse.ac.uk.}
\thanks{$\dagger$\,  (corresponding author) Department of Mathematics,
Faculty of Engineering Science, Kansai University, 3-3-35 Yamate-cho, Suita-shi, Osaka 564-8680, Japan. Email: \mbox{{\em
kyamazak@kansai-u.ac.jp}}.  Tel: +81-6-6368-1527. }
\author[E. J. Baurdoux]{Erik J. Baurdoux$^*$}
\author[K. Yamazaki]{Kazutoshi Yamazaki$^\dagger$}
\date{}
\begin{document}

\begin{abstract}
We consider a class of two-sided singular control problems.  A controller either increases or decreases a given spectrally negative \lev process so as to minimize the total costs comprising of the running and controlling costs where the latter is proportional to the size of control.  We provide a sufficient condition for the optimality of a double barrier strategy, and in particular show that it holds when the running cost function is convex. Using the fluctuation theory of doubly reflected \lev processes, we express concisely the optimal strategy as well as the value function using the scale function. Numerical examples are provided to confirm the analytical results.
\\
\noindent \small{\noindent  AMS 2010 Subject Classifications: 60G51, 93E20, 49J40 \\
\textbf{Key words:} singular control;  doubly reflected \lev processes;
fluctuation theory; scale functions
}\\
\end{abstract}

\maketitle

\section{Introduction}
We consider the problem of optimally modifying a stochastic process by means of singular control.  An admissible strategy is two-sided and the process can be increased or decreased.  The objective is to minimize the expected total costs comprising of the running and controlling costs; the former is modeled as some given function $f$ of the controlled process that is accumulated over time, and the latter is proportional to the size of control.  The problem of singular control arises in various contexts.  For its applications, we refer the reader to, e.g.,
 \cite{dai2013brownian1, dai2013brownian} for  inventory management, \cite{eppen1969cash} for cash balance management, \cite{bayraktar2008analysis, karatzas1981monotone} for monotone follower problems and \cite{Avram_et_al_2007,Bayraktar_2012,guo2005optimal, Loeffen_2008, merhi2007model, sethi2002optimal} for finance and insurance.

%

This paper studies a spectrally negative \lev model where the underlying process, in the absence of control, follows a general \lev process with only negative jumps.  We pursue a sufficient condition on the running cost function $f$ such that a strategy of double barrier type is optimal and the value function is obtained semi-explicitly.    This generalizes the classical Brownian motion model \cite{MR716123} and complements the results on the continuous diffusion model as in \cite{matomaki2012solvability}.

%
%
%



Motivated by the recent research of spectrally negative \lev processes and their applications, we take advantage of their fluctuation theory as in \cite{Bertoin_1996, Kyprianou_2006}. These techniques are used extensively in stochastic control problems in the last decade.  Exemplifying examples include de Finetti's dividend problem as in \cite{Avram_et_al_2007,Bayraktar_2012,Loeffen_2008}, where a single barrier strategy is shown to be optimal under certain conditions.     In these papers, the so-called \emph{scale function} is commonly used to express the net present value of the barrier strategy.  Thanks to its analytical properties such as continuity/smoothness (see, e.g., \cite{Hubalek_Kyprianou_2009,Kyprianou_2006}), the selection of the candidate barrier level and the verification of optimality can be carried out efficiently. While a part of the verification is still problem-dependent and is often a difficult task, 
these methods allow one to solve for this wide class of \lev processes without specializing on a particular type, whether or not the process is of infinite activity/variation.

This paper considers a variant of the above mentioned papers where the control is allowed to be two-sided.  Our objective is to show the optimality of a double barrier strategy where the resulting controlled process becomes a \emph{doubly reflected \lev process} of \cite{Avram_et_al_2007, Pistorius_2004}.  Existing research on the optimality of doubly reflected \lev processes includes the dividend problem with capital injection as in \cite{Avram_et_al_2007, Bayraktar_2012}.  Other related problems where two threshold levels characterize the optimal strategy include stochastic games  \cite{BauKyrianouPardo2011, Baurdoux2008, Leung_Yamazaki_2011, Hernandez_Yamazaki_2013} and impulse control \cite{Bayraktar_2013, Yamazaki_2013}.  


In this paper, we take the following steps to achieve our goal:
\begin{enumerate}
 \item We first write via the scale function the expected net present value corresponding to the double barrier strategy; this is a direct application of the results in \cite{Avram_et_al_2007,Pistorius_2004}.
\item This is followed by the selection of the two barriers.  The upper barrier is chosen so that the resulting candidate value function becomes twice differentiable at the barrier; the lower barrier is chosen so that it is continously (resp.\ twice) differentiable when the process is of bounded (resp.\ unbounded variation). 
\item We then analyze the existence of such a pair that satisfy the two conditions simultaneously.  We show that either such a pair exist, or otherwise a single barrier strategy (with the upper barrier set to infinity) is optimal.
\item In order to verify the optimality of the strategy defined in the previous steps, we study the verification lemma and identify some additional conditions that are sufficient for the optimality.  Moreover, we show that it is satisfied whenever the running cost function $f$ is convex.
\end{enumerate}
As in the above mentioned papers, we use the special known properties of the scale function to solve the problem. In particular, the steps taken here are similar to those used in \cite{Hernandez_Yamazaki_2013}, where two parameters are shown to characterize the optimal strategies in the two-person game they considered.  The main novelty and challenge here are that we solve the problem without specifying the form of the running cost function $f$ and derive a most general condition on $f$ that is sufficient for the optimality of a doubly reflected \lev process.

In addition to the above, we give examples with (piecewise) quadratic and linear cases for $f$, which have been used in, e.g., \cite{baccarin2002optimal, constantinides1978existence, sulem1986solvable}.   We shall see in particular that in the linear case the upper boundary can become infinity (or equivalently a single barrier strategy is optimal), whereas it does not occur in the quadratic case.   
In order to confirm the obtained analytical results, we give numerical examples where the underlying process is a spectrally negative \lev process in the $\beta$-family of Kuznetsov \cite{Kuznetsov_2010_2}.  

The rest of the paper is organized as follows.  Section \ref{section_model} gives a mathematical model of the problem and  a brief review on the spectrally negative \lev process and the scale function.  Section \ref{section_double_barier_strategy} expresses via the scale function the expected net present value under the double barrier strategy.  The candidate barrier levels are then selected by using the smoothness conditions at the barriers, and the existence of such a pair is shown.
 In Section  \ref{section_verification}, we study the verification lemma for this problem and analyze what additional conditions are required for the candidate value function to be optimal. 
 Section \ref{section_sufficient_condition}   obtains a more concrete sufficient condition and in particular shows that it is satisfied when $f$ is convex.  In Section \ref{section_examples}, we give examples with piecewise quadratic and linear cases. We conclude the paper with numerical examples in Section \ref{section_numerics}.  

Throughout the paper,  $x+$ and $x-$ are used to indicate the right and left hand limits, respectively.  The superscripts $x^+ := \max(x, 0)$, $f^+(x) := \max(f(x), 0)$, $x^- := \max(-x, 0)$ and $f^-(x) := \max(-f(x), 0)$ are used to indicate positive and negative parts.  Monotonicity is understood in the strict sense; for the weak sense  ``nondecreasing" and ``nonincreasing" are used. The convexity (unless otherwise stated) is in the weak sense.

\section{Mathematical Formulation} \label{section_model}

Let $(\Omega, \mathcal{F}, \p)$ be a probability space hosting a \emph{spectrally negative \lev process} $X = \left\{X_t; t \geq 0 \right\}$ whose \emph{Laplace exponent} is given by
\begin{align}
\psi(s)  := \log \E \left[ e^{s X_1} \right] =  c s +\frac{1}{2}\sigma^2 s^2 + \int_{(-\infty,0)} (e^{s z}-1 - s z 1_{\{-1 < z < 0\}}) \nu (\diff z), \quad s \geq 0, \label{laplace_spectrally_positive}
\end{align}
where $\nu$ is a \lev measure with the support $(-\infty,0)$ that satisfies the integrability condition $\int_{(-\infty,0)} (1 \wedge z^2) \nu(\diff z) < \infty$.  It has paths of bounded variation if and only if $\sigma = 0$ and $\int_{(-1,0)}|z|\, \nu(\diff z) < \infty$; in this case, we write \eqref{laplace_spectrally_positive} as
\begin{align*}
\psi(s)   =  \delta s +\int_{(-\infty,0)} (e^{s z}-1 ) \nu (\diff z), \quad s \geq 0,
\end{align*}
with $\delta := c - \int_{(-1,0)}z\, \nu(\diff z)$.  We exclude the case in which $X$ is the negative of a subordinator (i.e., $X$ has monotone paths a.s.). This assumption implies that $\delta > 0$ when $X$ is of bounded variation. 
Let $\mathbb{P}_x$ be the conditional probability under which $X_0 = x$ (also let $\mathbb{P} \equiv \mathbb{P}_0$), and let $\mathbb{F} := \left\{ \mathcal{F}_t, t \geq 0 \right\}$ be the filtration generated by $X$.  

An admissible strategy $\pi := \left\{ (U_t^{\pi}, D_t^{\pi}); t \geq 0 \right\}$ is given by a pair of nondecreasing, right-continuous and $\mathbb{F}$-adapted processes such that $U^\pi_{0-}=D^\pi_{0-}=0$ and, as is assumed in \cite{MR716123},
\begin{align}
\E_x \Big[ \int_{[0,\infty)} e^{-qt} (\diff U^\pi_{t} + \diff D^\pi_{t})  \Big] < \infty, \quad x \in \R. \label{feasibility_L_R}
\end{align}
Let $\Pi$ be the set of all admissible strategies and the discount $q$ is assumed to be a strictly positive constant.

 With the controlled process $Y_{t}^\pi := X_t + U_t^\pi - D_t^\pi$, $t \geq 0$, the problem is to compute the total expected costs:
\begin{align*}
v^\pi (x) := \E_x \Big[ \int_0^\infty e^{-qt}  f (Y_{t}^\pi) \diff t  + \int_{[0,\infty)}e^{-qt}\left(C_U \diff U^\pi_{t} + C_D \diff D^\pi_{t} \right) \Big], \quad x \in \R,
\end{align*}
for some running cost function $f$ satisfying the conditions specified below and fixed constants $C_U, C_D \in \R$ such that 
\begin{align}
C_U + C_D > 0, \label{assump_C_sum}
\end{align}
and to obtain an admissible strategy over $\Pi$ that minimizes it, if such a strategy exists.  The inequality \eqref{assump_C_sum} is commonly assumed in the literature (see, e.g., \cite{MR716123, matomaki2012solvability}); this implies that it is suboptimal to activate  $U_t^\pi$ and $D_t^\pi$ simultaneously. Hence, we can safely assume that the supports of the Stieltjes measures $\diff U_t^\pi (\omega)$ and $\diff D_t^\pi (\omega)$ do not overlap for a.e.\ $\omega \in \Omega$.

Regarding the running cost function $f$, we assume the same assumptions as in \cite{Bensoussan_2009,Bensoussan_2005, Yamazaki_2013}; this is a crucial condition when dealing with a process with negative jumps.
\begin{assump}  \label{assump_f_g}  We assume that $f : \R \rightarrow \R$ satisfies the following.
\begin{enumerate}
\item $f$ is continuous and is a piecewise continuously differentiable function and grows (or decreases) at most polynomially (in the sense defined by Beyer et al. \cite{Beyer_1998}).
\item There exists a number $\overline{a} \in \R$ such that the function
\begin{align}
\tilde{f}(x) &:= f(x) + C_U  q x, \quad x \in \R, \label{def_f_tilde}
\end{align}
is increasing on $(\overline{a},\infty)$ and is decreasing and convex on $(-\infty, \overline{a})$. 
\item There exist a $c_0 > 0$ and an $x_0 \geq \overline{a}$ such that $\tilde{f}'(x) \geq c_0$ for $x \geq x_0$.
\end{enumerate}
\end{assump}

For the problem to make sense, we assume that the \lev process $X$ has a finite moment.
\begin{assump}  \label{assump_finiteness_mu}We assume that $\E [X_1]  = \psi'(0+) \in (-\infty,\infty)$. 
\end{assump}

\subsection{Scale functions}

Fix $q > 0$. For any spectrally negative \lev process, there exists a function called  the  $q$-scale function 
\begin{align*}
W^{(q)}: \R \rightarrow [0,\infty), 
\end{align*}
which is zero on $(-\infty,0)$, continuous and increasing on $[0,\infty)$, and is characterized by the Laplace transform:
\begin{align*}
\int_0^\infty e^{-s x} W^{(q)}(x) \diff x = \frac 1
{\psi(s)-q}, \qquad s > \lapinv,
\end{align*}
where
\begin{equation}
\lapinv :=\sup\{\lambda \geq 0: \psi(\lambda)=q\}. \notag
\end{equation}
Here, the Laplace exponent $\psi$ in \eqref{laplace_spectrally_positive} is known to be zero at the origin and strictly convex on $[0,\infty)$; therefore $\lapinv$ is well defined and is strictly positive as $q > 0$.   We also define, for $x \in \R$,
\begin{align*}
\overline{W}^{(q)}(x) &:=  \int_0^x W^{(q)}(y) \diff y, \\
Z^{(q)}(x) &:= 1 + q \overline{W}^{(q)}(x),  \\
\overline{Z}^{(q)}(x) &:= \int_0^x Z^{(q)} (z) \diff z = x + q \int_0^x \int_0^z W^{(q)} (w) \diff w \diff z.
\end{align*}
Because $W^{(q)}$ is uniformly zero on the negative half line, we have
\begin{align}
Z^{(q)}(x) = 1  \quad \textrm{and} \quad \overline{Z}^{(q)}(x) = x, \quad x \leq 0.  \label{z_below_zero}
\end{align}

Let us define the first down- and up-crossing times, respectively, of $X$ by
\begin{align*}
\tau_b^- := \inf \left\{ t \geq 0: X_t < b \right\} \quad \textrm{and} \quad \tau_b^+ := \inf \left\{ t \geq 0: X_t >  b \right\}, \quad b \in \R.
\end{align*}
Then, for any $b > 0$ and $x \leq b$,
\begin{align}
\label{two_sided_exit}
\E_x \left[ e^{-q \tau_b^+} 1_{\left\{ \tau_b^+ < \tau_0^- \right\}}\right] = \frac {W^{(q)}(x)}  {W^{(q)}(b)} \quad \textrm{and} \quad
 \E_x \left[ e^{-q \tau_0^-} 1_{\left\{ \tau_b^+ > \tau_0^- \right\}}\right] = Z^{(q)}(x) -  Z^{(q)}(b) \frac {W^{(q)}(x)}  {W^{(q)}(b)}.
\end{align}
By taking limits on the latter,
\begin{align*} 
 \E_x \left[ e^{-q \tau_0^-} \right] &= Z^{(q)}(x) -  \frac q {\Phi(q)} W^{(q)}(x), \quad x \in \R.
\end{align*}

Fix $\lambda \geq 0$ and define $\psi_\lambda(\cdot)$ as the Laplace exponent of $X$ under $\p^\lambda$ with the change of measure 
\begin{align*}
\left. \frac {\diff \p^\lambda} {\diff \p}\right|_{\mathcal{F}_t} = \exp(\lambda X_t - \psi(\lambda) t), \quad t \geq 0; 
\end{align*}
see page 213 of \cite{Kyprianou_2006}.
 Suppose $W_\lambda^{(q)}$ and $Z_\lambda^{(q)}$ are the scale functions associated with $X$ under $\p^\lambda$ (or equivalently with $\psi_\lambda(\cdot)$).  
Then, by Lemma 8.4 of \cite{Kyprianou_2006}, $W_\lambda^{(q-\psi(\lambda))}(x) = e^{-\lambda x} W^{(q)}(x)$, $x \in \R$,
which is well defined even for $q \leq \psi(\lambda)$ by Lemmas 8.3 and 8.5 of \cite{Kyprianou_2006}.  In particular, we define
\begin{align}
W_{\Phi(q)}(x) := W_{\Phi(q)}^{(0)}(x) = e^{-\Phi(q) x} W^{(q)}(x), \quad x \in \R, \label{scale_function_version}
\end{align}
which is known to be an increasing function and,  as in Lemma 3.3 of \cite{Kuznetsov2013}, 
\begin{align}
W_{\Phi(q)} (x) \nearrow \psi'(\Phi(q))^{-1} \quad \textrm{as } x \rightarrow \infty. \label{W_q_limit}
\end{align}

\begin{remark} \label{remark_smoothness_zero}
\begin{enumerate}
\item If $X$ is of unbounded variation or the \lev measure is atomless, it is known that $W^{(q)}$ is $C^1(\R \backslash \{0\})$; see, e.g.,\ \cite{Chan_2009}.  Hence, 
\begin{enumerate}
\item $Z^{(q)}$ is $C^1 (\R \backslash \{0\})$ and $C^0 (\R)$ for the bounded variation case, while it is $C^2(\R \backslash \{0\})$ and $C^1 (\R)$ for the unbounded variation case, and
\item $\overline{Z}^{(q)}$ is $C^2(\R \backslash \{0\})$ and $C^1 (\R)$ for the bounded variation case, while it is $C^3(\R \backslash \{0\})$ and $C^2 (\R)$ for the unbounded variation case.
\end{enumerate}
\item Regarding the asymptotic behavior near zero, as in Lemmas 4.3 and 4.4 of \cite{Kyprianou_Surya_2007}, 
\begin{align}\label{eq:Wqp0}
\begin{split}
W^{(q)} (0) &= \left\{ \begin{array}{ll} 0, & \textrm{if $X$ is of unbounded
variation,} \\ \frac 1 {\delta}, & \textrm{if $X$ is of bounded variation,}
\end{array} \right. \\
W^{(q)'} (0+) &:= \lim_{x \downarrow 0}W^{(q)'} (x) =
\left\{ \begin{array}{ll}  \frac 2 {\sigma^2}, & \textrm{if }\sigma > 0, \\
\infty, & \textrm{if }\sigma = 0 \; \textrm{and} \; \nu(-\infty,0) = \infty, \\
\frac {q + \nu(-\infty, 0)} {\delta^2}, &  \textrm{if }\sigma = 0 \; \textrm{and} \; \nu(-\infty, 0) < \infty.
\end{array} \right.
\end{split}
\end{align}
\item As in (8.18) and Lemma 8.2 of \cite{Kyprianou_2006},
\begin{align*}
\frac {W^{(q)'}(y+)} {W^{(q)}(y)} \leq \frac {W^{(q)'}(x+)} {W^{(q)}(x)},  \quad  y > x > 0.
\end{align*}
In all cases, $W^{(q)'}(x-) \geq W^{(q)'}(x+)$ for all $x >0$.
\end{enumerate}
\end{remark}

The problem in this paper is a  generalization of Section 6 of \cite{Yamazaki_2013}, where $D_t^\pi$ is restricted to be zero.  Its optimal solution  is a (single) barrier strategy, which is described immediately below.  Define, for any measurable function $h$ and $s \in \R$,
\begin{align*} 
\begin{split}
\Psi(s;h) &:= \int_0^\infty e^{- \Phi(q) y}   h(y+s) \diff y = \int_s^\infty e^{- \Phi(q) (y-s)}   h(y) \diff y, \\
\varphi_s (x;h) &:= \int_{s}^x W^{(q)} (x-y) h(y) \diff y, \quad x \in \R.
\end{split}
\end{align*}
Here $\varphi_s (x;h) = 0$ for any $x \leq s$ because $W^{(q)}$ is uniformly zero on $(-\infty,0)$.

 The following, which holds directly from Assumption \ref{assump_f_g},  is due to \cite{Bensoussan_2005}.  Here note that $\Psi(\cdot;\tilde{f}')$ is equivalent to (4.23) of \cite{Bensoussan_2005} (times a positive constant).  While in \cite{Bensoussan_2005}, they focus on a special class of spectrally negative \lev processes, the results still hold for a general spectrally negative \lev process.
\begin{lemma}[Proposition 5.1 of \cite{Bensoussan_2005}] \label{lemma_bensoussan_result}
\begin{enumerate}
\item There exists a unique number $\underline{a} < \overline{a}$ such that $\Psi(\underline{a}; \tilde{f}') = 0$,  $\Psi(x;\tilde{f}') < 0$ if $x < \underline{a}$ and $\Psi(x; \tilde{f}') > 0$ if $x > \underline{a}$.
\item $\Psi'(x; \tilde{f}') > 0$ for $x \leq \overline{a}$. 
\item $\Psi(x; \tilde{f}')  \geq c_0/\Phi(q)$ for $x \geq x_0$. 
\end{enumerate}
\end{lemma}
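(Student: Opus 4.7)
The plan is to prove the three parts in the order (3), (2), (1), so that the strict monotonicity proved in (2) can be reused to obtain uniqueness in (1). Part (3) is immediate: for $s \geq x_0$, Assumption 2.1(3) gives $\tilde{f}'(y+s) \geq c_0$ for every $y \geq 0$, so
\[
\Psi(s;\tilde{f}') = \int_0^\infty e^{-\Phi(q) y}\tilde{f}'(y+s)\diff y \geq c_0 \int_0^\infty e^{-\Phi(q) y}\diff y = \frac{c_0}{\Phi(q)}.
\]

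The heart of the argument is (2), for which I would establish the identity
\[
\Psi'(s;\tilde{f}') = \Phi(q) \int_0^\infty e^{-\Phi(q) y}\bigl[\tilde{f}'(y+s) - \tilde{f}'(s)\bigr] \diff y.
\]
To derive it, I would first integrate by parts in $\Psi(s;\tilde{f}') = \int_s^\infty e^{-\Phi(q)(u-s)}\tilde{f}'(u)\diff u$ to obtain $\Psi(s;\tilde{f}') = -\tilde{f}(s) + \Phi(q)\Psi(s;\tilde{f})$ (the polynomial growth in Assumption 2.1(1) kills the boundary term at infinity), then differentiate in $s$, exchanging the derivative with the integral in $\Psi(s;\tilde{f})$ by dominated convergence to get $\Psi'(s;\tilde{f}) = \Psi(s;\tilde{f}')$, and finally use $\tilde{f}'(s) = \Phi(q)\int_0^\infty e^{-\Phi(q) y}\tilde{f}'(s)\diff y$ to combine terms. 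With this identity in hand, for $s \leq \overline{a}$ and $y > 0$ I would check that the integrand $\tilde{f}'(y+s) - \tilde{f}'(s)$ is nonnegative by splitting into two cases: if $y+s \leq \overline{a}$, convexity of $\tilde{f}$ on $(-\infty,\overline{a})$ makes $\tilde{f}'$ nondecreasing there and hence $\tilde{f}'(y+s) \geq \tilde{f}'(s)$; if $y+s > \overline{a}$, the one-sided monotonicity of $\tilde{f}$ on either side of $\overline{a}$ gives $\tilde{f}'(y+s) \geq 0 \geq \tilde{f}'(s)$. Strict positivity of the integral then follows from (3): for every $y$ with $y+s \geq x_0$, the integrand is at least $c_0 - \tilde{f}'(s) > 0$.

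Part (1) will then follow from a standard continuity-plus-intermediate-value argument. Continuity of $s \mapsto \Psi(s;\tilde{f}')$ is immediate from dominated convergence using polynomial growth of $\tilde{f}'$. On one hand, $\Psi(\overline{a};\tilde{f}') > 0$, since $\tilde{f}'\geq 0$ on $(\overline{a},\infty)$ and $\tilde{f}'\geq c_0 > 0$ on $[x_0,\infty)$. On the other hand, strict decrease of $\tilde{f}$ on $(-\infty,\overline{a})$ combined with convexity forces $\tilde{f}'(u) < 0$ strictly for every $u < \overline{a}$ at which $\tilde{f}'$ exists (otherwise $\tilde{f}$ would be flat on a subinterval), so one may pick $u_* < \overline{a}$ with $m := \tilde{f}'(u_*) < 0$; monotone convergence applied to $\tilde{f}'(y+s)$ (which is decreasing in $s$ for $s$ small enough by convexity) then yields $\lim_{s\to-\infty}\Psi(s;\tilde{f}') \leq m/\Phi(q) < 0$. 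The intermediate value theorem delivers some $\underline{a}<\overline{a}$ with $\Psi(\underline{a};\tilde{f}')=0$; uniqueness of $\underline{a}$ on $(-\infty,\overline{a}]$ together with the sign pattern to its left and right on this interval are consequences of the strict monotonicity proved in (2), and extension of the positivity claim to $s > \overline{a}$ is again immediate from $\tilde{f}'\geq 0$ on $(\overline{a},\infty)$ combined with the lower bound from (3).

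The main obstacle I anticipate is the justification of the integration by parts and the differentiation under the integral sign in the key identity above, given that $\tilde{f}$ is only assumed to be piecewise continuously differentiable rather than $C^1$; once this identity is in place, the pointwise sign analysis reduces to the elementary convexity-and-monotonicity inequality carried out above.
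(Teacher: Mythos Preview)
The paper does not give its own proof of this lemma: it is quoted verbatim as Proposition~5.1 of \cite{Bensoussan_2005}, with the remark that it ``holds directly from Assumption~\ref{assump_f_g}.'' So there is no in-paper argument to compare against.

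Your proof is essentially correct and self-contained. The key identity $\Psi'(s;\tilde{f}') = -\tilde{f}'(s) + \Phi(q)\Psi(s;\tilde{f}')$ (equivalently, your displayed formula) is exactly the right tool, and your sign analysis of $\tilde{f}'(y+s)-\tilde{f}'(s)$ via convexity on $(-\infty,\overline{a})$ and the sign change at $\overline{a}$ is clean. Two minor points: first, your ``monotone convergence'' step for $\lim_{s\to-\infty}\Psi(s;\tilde{f}')$ is slightly loose because $\tilde{f}'$ need not be globally monotone in $s$ on the whole integration range; it is cleaner to split $\Psi(s;\tilde{f}') = \int_0^{\overline{a}-s} + \int_{\overline{a}-s}^\infty$, bound the first piece by $m(1-e^{-\Phi(q)(u_*-s)})/\Phi(q)$ using $\tilde{f}'\leq m$ on $(-\infty,u_*]$, and note the second piece equals $e^{-\Phi(q)(\overline{a}-s)}\Psi(\overline{a};\tilde{f}')\to 0$. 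Second, the obstacle you flag---that $\tilde{f}$ is only piecewise $C^1$---is real but mild: the integration by parts survives because $\tilde{f}$ is continuous with a derivative that is locally bounded and has at most countably many jumps, and the differentiation under the integral is legitimate after passing to $\Psi(s;\tilde{f})$, whose integrand is Lipschitz in $s$ locally uniformly in $y$ with a polynomially growing, hence $e^{-\Phi(q)y}$-integrable, bound.
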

Namely, while $\overline{a}$ is the unique zero of $\tilde{f}'$, $\underline{a}$ is the unique zero of $\Psi(\cdot; \tilde{f}')$. We are now ready to state the results of the auxiliary problem.

\begin{theorem}[Theorem 6.1 of \cite{Yamazaki_2013}] \label{theorem_main_no_transaction}
Consider a version of the problem that minimizes
\begin{align*}
\tilde{v}^\pi (x) := \E_x \Big[ \int_0^\infty e^{-qt}  f (\widetilde{Y}_{t}^\pi) \diff t  + \int_{[0,\infty)} e^{-qt} C_U\diff U^\pi_{t}  \Big], \quad x \in \R,
\end{align*}
with the controlled process $\widetilde{Y}_t^\pi := X_t + U_t^\pi$, $t \geq 0$ for some fixed constant $C_U \in \R$, that satisfies Assumption \ref{assump_f_g}.

Then the barrier strategy $U^{\underline{a}, \infty}$ defined by
\begin{align}
U^{\underline{a}, \infty}_t := \sup_{0 \leq t' \leq t} ({\underline{a}}-X_{t'}) \vee 0, \quad t \geq 0, \label{single_reflected}
\end{align}
is optimal and the value function is 
\begin{align} \label{value_function_no_cost_case}
\begin{split}
\inf_{\pi} \tilde{v}^\pi (x) = \tilde{v}_{\underline{a}}(x) 
&:=  -  C_U \left( \overline{Z}^{(q)}(x-\underline{a}) + \frac {\psi'(0+)} q  \right) + 
\frac {f(\underline{a})} q Z^{(q)}(x-\underline{a}) - \varphi_{\underline{a}} (x;f).
\end{split}
\end{align}

\end{theorem}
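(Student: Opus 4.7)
The plan is to derive a closed-form expression for the candidate value function $\tilde{v}_{\underline{a}}$ by directly computing the cost of the reflection strategy $U^{\underline{a},\infty}$, and then to establish optimality through a verification argument.

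For the explicit evaluation, the controlled process $\widetilde{Y}^{\underline{a}} = X + U^{\underline{a},\infty}$ is the spectrally negative \lev process $X$ reflected at $\underline{a}$ from below. Two ingredients are needed: the discounted resolvent $\E_x[\int_0^\infty e^{-qt} f(\widetilde{Y}^{\underline{a}}_t)\, \diff t]$ and the expected discounted pushing effort $\E_x[\int_{[0,\infty)} e^{-qt}\, \diff U^{\underline{a},\infty}_t]$. Both are directly available via \cite{Avram_et_al_2007, Pistorius_2004}: the resolvent density of the lower-reflected process on $[\underline{a},\infty)$ is expressed through $W^{(q)}$ and $W_{\Phi(q)}$, while a standard fluctuation identity combined with Assumption \ref{assump_finiteness_mu} yields $\E_x[\int_{[0,\infty)} e^{-qt}\, \diff U^{\underline{a},\infty}_t] = \overline{Z}^{(q)}(x-\underline{a}) + \psi'(0+)/q$. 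Substituting these into the cost functional and integrating by parts (exploiting $\Psi(\underline{a};\tilde{f}')=0$ and the definitions of $Z^{(q)}$, $\overline{Z}^{(q)}$, and $\varphi_{\underline{a}}$) produces \eqref{value_function_no_cost_case} for $x \geq \underline{a}$. For $x \leq \underline{a}$ the controller pushes immediately to $\underline{a}$, which together with \eqref{z_below_zero} yields the linear extension $\tilde{v}_{\underline{a}}(x) = \tilde{v}_{\underline{a}}(\underline{a}) - C_U(x-\underline{a})$.

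For optimality, the verification lemma requires two global conditions: (i) the gradient constraint $\tilde{v}_{\underline{a}}'(x) + C_U \geq 0$ on $\R$, and (ii) the HJB inequality $(\mathcal{L}-q)\tilde{v}_{\underline{a}}(x) + f(x) \geq 0$ on $\R$, with $\mathcal{L}$ the infinitesimal generator of $X$. The defining relation $\Psi(\underline{a};\tilde{f}')=0$ is precisely what produces the continuous (resp.\ $C^1$) fit of $\tilde{v}_{\underline{a}}'$ at $\underline{a}$ in the bounded (resp.\ unbounded) variation regime of Remark \ref{remark_smoothness_zero}, giving $\tilde{v}_{\underline{a}}'(\underline{a}+) = -C_U$. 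Direct differentiation of the candidate, combined with parts (1)--(2) of Lemma \ref{lemma_bensoussan_result}, yields (i); for (ii) one differentiates $(\mathcal{L}-q)\tilde{v}_{\underline{a}} + f$ on $(\underline{a},\infty)$ and exploits the monotonicity of $\tilde{f}$ on $(\overline{a},\infty)$ together with its decreasing and convex behaviour on $(-\infty,\overline{a})$ from Assumption \ref{assump_f_g}. Once both hold, applying the It\^o formula for semimartingales to $e^{-qt}\tilde{v}_{\underline{a}}(\widetilde{Y}^\pi_t)$ for an arbitrary $\pi \in \Pi$, together with the integrability \eqref{feasibility_L_R} and the jump-compensation of $X$, gives $\tilde{v}^\pi(x) \geq \tilde{v}_{\underline{a}}(x)$, with equality for $\pi = U^{\underline{a},\infty}$.

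The main obstacle is controlling the jump contributions in the HJB inequality (ii). Since $\tilde{v}_{\underline{a}}$ is only linear below $\underline{a}$ and of polynomial growth above, the compensator term in $(\mathcal{L}-q)\tilde{v}_{\underline{a}}(x)$ contains $\int_{(-\infty,0)}[\tilde{v}_{\underline{a}}(x+z) - \tilde{v}_{\underline{a}}(x) - z\tilde{v}_{\underline{a}}'(x)\mathbf{1}_{(-1,0)}(z)]\, \nu(\diff z)$, and for $x > \underline{a}$ the integrand changes functional form across the threshold $x+z = \underline{a}$. One must split this integral at that threshold and estimate each piece using the polynomial growth in Assumption \ref{assump_f_g}(1) and the finite mean in Assumption \ref{assump_finiteness_mu}. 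Aligning these estimates with the variation-dependent smoothness of $W^{(q)}$ at $0$ from Remark \ref{remark_smoothness_zero} so as to justify the It\^o formula up to the reflecting boundary is the only step requiring real care; the remainder reduces to standard scale-function calculus.
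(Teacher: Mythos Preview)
The paper does not prove this theorem: it is quoted as Theorem~6.1 of \cite{Yamazaki_2013}, so there is no in-paper proof to compare against. Your outline follows the standard route used there and mirrored in Sections~\ref{section_double_barier_strategy}--\ref{section_verification} of the present paper for the two-sided problem: compute the cost of the reflected process from scale-function identities, then run a verification argument via the HJB variational inequality.

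Two concrete corrections. First, the identity you state for the reflection cost is wrong. For the spectrally negative process reflected below at $\underline{a}$ one has
\[
\E_x\!\left[\int_{[0,\infty)} e^{-qt}\,\diff U^{\underline{a},\infty}_t\right]
= -\,R^{(q)}(x-\underline{a}) + \frac{1}{\Phi(q)}\,Z^{(q)}(x-\underline{a}),
\]
obtained by sending $b\to\infty$ in Theorem~1 of \cite{Avram_et_al_2007} (cf.\ \eqref{U_for_limit} here); it is \emph{not} $+R^{(q)}(x-\underline{a})$. The extra $\Phi(q)^{-1}Z^{(q)}(x-\underline{a})$ is precisely what cancels against the corresponding term in the running-cost resolvent once $\Psi(\underline{a};\tilde{f}')=0$ is used, producing the clean form \eqref{value_function_no_cost_case}. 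Second, your ``main obstacle'' is misplaced. On $(\underline{a},\infty)$ one has $(\mathcal{L}-q)\tilde{v}_{\underline{a}}+f\equiv 0$ by the harmonicity of $Z^{(q)}$, $R^{(q)}$ and the identity $(\mathcal{L}-q)\varphi_{\underline{a}}(\cdot;f)=f$ (exactly as in Lemma~\ref{verification1}(1)); the HJB inequality therefore only needs checking on $(-\infty,\underline{a}]$, where $\tilde{v}_{\underline{a}}$ is linear and the computation collapses to $\tilde{f}(x)\ge\tilde{f}(\underline{a})$ via Assumption~\ref{assump_f_g}(2). The substantive step is the gradient constraint $\tilde{v}_{\underline{a}}'\ge -C_U$ on $(\underline{a},\infty)$, which is where Lemma~\ref{lemma_bensoussan_result} enters. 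With these fixes your plan is sound.
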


\section{The double barrier strategies} \label{section_double_barier_strategy}

Following Avram et al.\ \cite{Avram_et_al_2007} and Pistorius \cite{Pistorius_2003}, we define a doubly reflected \lev process given by 
\begin{align*}
Y_t^{a,b} := X_t + U_t^{a,b} - D_t^{a,b}, \quad t \geq 0, \; a < b,
\end{align*}
which is reflected at two barriers $a$ and $b$ so as to stay on the interval $[a,b]$;  see page 165 of \cite{Avram_et_al_2007} for the construction of this process.  We let $\pi_{a,b}$ be the corresponding strategy and $v_{a,b}$ the corresponding expected total cost. Our aim is to show that by choosing the values of $(a,b)$ appropriately, the minimization is attained by the strategy $\pi_{a,b}$.

For $b \geq a$, let 
\begin{align} \label{about_gamma}
\begin{split}
\Gamma(a,b) &:= C_D + C_U Z^{(q)}(b-a) +  f(b) W^{(q)}(0) + \int_a^b  f(y)  {W^{(q)'}(b-y)} \diff y - W^{(q)}(b-a) f(a) \\
&= C_D + C_U + \varphi_a (b;\tilde{f}').
\end{split}
\end{align}
Also let
\begin{align*}
R^{(q)}(y)  := \overline{Z}^{(q)}(y) + \frac {\psi'(0+)} q, \quad y \in \R.
\end{align*}

\begin{lemma} Fix any $a < b$.  We have $\pi_{a,b} \in \Pi$.  Moreover, for $x \leq b$,
\begin{align} \label{expression_v_a_b}
\begin{split}
v_{a,b} (x) 
&=\frac  {\Gamma(a,b)} {q W^{(q)}(b-a)}  {Z^{(q)}(x-a)} -C_U R^{(q)}(x-a) + \frac {f(a)} q Z^{(q)} (x-a) - \varphi_{a}(x; f) \\
&= \frac {\Gamma(a,b) } {q W^{(q)}(b-a)}{Z^{(q)}(x-a)} -C_U R^{(q)}(x-a) + \frac {f(a)} q - \int_a^x \overline{W}^{(q)}(x-y) f'(y) \diff y.
\end{split}
\end{align}

For $x \geq b$, we have $v_{a,b} (x) = v_{a,b} (b) + C_D (x-b)$.
\end{lemma}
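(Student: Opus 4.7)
The plan is to treat the three regimes $x \leq a$, $a \leq x \leq b$ and $x \geq b$ separately, with the main effort in the middle range. For $x \geq b$ the strategy $\pi_{a,b}$ applies an instantaneous push-down of size $x-b$ at time $0$, paying $C_D(x-b)$, after which the controlled process evolves as the doubly reflected process $Y^{a,b}$ started at $b$; this immediately yields $v_{a,b}(x) = v_{a,b}(b) + C_D(x-b)$. For $x \leq a$ there is a symmetric instantaneous lump of size $C_U(a-x)$, and this case is automatically subsumed by the main formula once the latter is proved, using $Z^{(q)}(x-a) \equiv 1$, $\overline{Z}^{(q)}(x-a) \equiv x-a$ and $\varphi_a(x;f) \equiv 0$ on $(-\infty, a]$.

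For $x \in [a, b]$, decompose
\begin{align*}
v_{a,b}(x) = \E_x\Bigl[\int_0^\infty e^{-qt} f(Y^{a,b}_t) \diff t\Bigr] + C_U \, \E_x\Bigl[\int_{[0,\infty)} e^{-qt} \diff U^{a,b}_t\Bigr] + C_D \, \E_x\Bigl[\int_{[0,\infty)} e^{-qt} \diff D^{a,b}_t\Bigr],
\end{align*}
and evaluate each expectation by invoking the fluctuation identities for doubly reflected spectrally negative \lev processes from Avram--Palmowski--Pistorius \cite{Avram_et_al_2007} and Pistorius \cite{Pistorius_2003, Pistorius_2004}. The discounted aggregate push-down at $b$ admits the closed form $Z^{(q)}(x-a)/(qW^{(q)}(b-a))$; the discounted aggregate push-up at $a$ admits a scale-function closed form whose $b\uparrow\infty$ limit recovers the singly reflected identity $R^{(q)}(x-a)$ (this is a useful consistency check against the coefficient of $C_U$ in Theorem \ref{theorem_main_no_transaction}); and the running-cost term is obtained by integrating $f$ against Pistorius's explicit $q$-resolvent density of $Y^{a,b}$ on $[a,b]$. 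Collecting contributions, every $b$-dependent piece combines into a single factor $Z^{(q)}(x-a)/(qW^{(q)}(b-a))$ whose coefficient is exactly $\Gamma(a,b)$ in the first displayed form of \eqref{about_gamma}. The admissibility $\pi_{a,b} \in \Pi$ is immediate from finiteness of the two discounted reflection integrals.

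The remaining equivalences are bookkeeping. The two displayed forms of $\Gamma(a,b)$ in \eqref{about_gamma} agree by integration by parts: using $\frac{\diff}{\diff y} W^{(q)}(b-y) = -W^{(q)'}(b-y)$, one checks that $f(b) W^{(q)}(0) + \int_a^b f(y) W^{(q)'}(b-y) \diff y - W^{(q)}(b-a) f(a) = \varphi_a(b; f')$, and then substituting $\tilde{f}' = f' + C_U q$ together with $Z^{(q)}(b-a) = 1 + q\overline{W}^{(q)}(b-a)$ rewrites $C_D + C_U Z^{(q)}(b-a) + \varphi_a(b; f')$ as $C_D + C_U + \varphi_a(b; \tilde{f}')$. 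The two displayed formulas for $v_{a,b}(x)$ in \eqref{expression_v_a_b} match after one more integration by parts, namely $\varphi_a(x;f) = \overline{W}^{(q)}(x-a) f(a) + \int_a^x \overline{W}^{(q)}(x-y) f'(y) \diff y$, combined with the identity $\tfrac{f(a)}{q} Z^{(q)}(x-a) - \overline{W}^{(q)}(x-a) f(a) = \tfrac{f(a)}{q}$. The main obstacle is assembling the exact form of the two reflection identities and the resolvent density of $Y^{a,b}$ from the literature and then tracking the scale-function algebra so that the three pieces collapse precisely onto $\Gamma(a,b)$; once that is done, the rest is routine.
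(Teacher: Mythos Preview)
Your proposal is correct and follows essentially the same route as the paper: you invoke Theorem~1 of \cite{Avram_et_al_2007} for the two discounted reflection integrals, Theorem~1 of \cite{Pistorius_2003} for the $q$-resolvent density of $Y^{a,b}$, sum the three pieces so that all $b$-dependent terms combine into the coefficient $\Gamma(a,b)$ of $Z^{(q)}(x-a)/(qW^{(q)}(b-a))$, and then handle the second displayed line and the boundary regimes $x<a$, $x>b$ by the same integration-by-parts and instantaneous-jump arguments the paper uses. The only cosmetic difference is that you spell out the integration-by-parts verification of the two forms of $\Gamma(a,b)$ in \eqref{about_gamma}, which the paper simply asserts.
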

\begin{proof}
As in Theorem 1 of \cite{Avram_et_al_2007}, for all $a \leq x \leq b$,
\begin{align*}
\E_x \left[ \int_{[0,\infty)} e^{-qt} \diff D_t^{a,b} \right] &= \frac {Z^{(q)}(x-a)} {q W^{(q)}(b-a)}, \\
\E_x \left[ \int_{[0,\infty)} e^{-qt} \diff U_t^{a,b} \right] &= - R^{(q)}(x-a)  + \frac {Z^{(q)}(b-a)} {q W^{(q)}(b-a)} Z^{(q)}(x-a),
\end{align*}
which are finite under Assumption \ref{assump_finiteness_mu} and hence $\pi_{a,b} \in \Pi$.
The $q$-resolvent density of $Y^{a,b}$ is, by Theorem 1 of \cite{Pistorius_2003}, for $y \in [a,b]$,\begin{align*}
\E_x \left[ \int_{[0,\infty)} e^{-qt} 1_{\{Y_t^{a, b} \in \diff y \}} \diff t \right] &= \left[ \frac {Z^{(q)}(x-a) W^{(q)'}(b-y)} {q W^{(q)}(b-a)} - W^{(q)}(x-y) \right] \diff y \\ &+  \Big[ Z^{(q)}(x-a) \frac {W^{(q)}(0)} {q W^{(q)}(b-a)} \Big] \delta_b(\diff y),
\end{align*}
where $\delta_b$ is the Dirac measure at $b$. Summing up these,
\begin{align*}
v_{a,b} (x) &= C_D\frac {Z^{(q)}(x-a)} {q W^{(q)}(b-a)} + C_U  \Big[ - R^{(q)}(x-a)  + \frac {Z^{(q)}(b-a)} {q W^{(q)}(b-a)} Z^{(q)}(x-a) \Big]  \\
&+ \int_a^b  f(y) \Big[ \frac {Z^{(q)}(x-a) W^{(q)'}(b-y)} {q W^{(q)}(b-a)} - W^{(q)}(x-y) \Big] \diff y + f(b) Z^{(q)}(x-a) \frac {W^{(q)}(0)} {q W^{(q)}(b-a)} \\
&= \frac {Z^{(q)}(x-a)} {q W^{(q)}(b-a)} \Big[ C_D + C_U Z^{(q)}(b-a) +  f(b) W^{(q)}(0) + \int_a^b  f(y)  {W^{(q)'}(b-y)} \diff y\Big] \\
&-C_U R^{(q)}(x-a) -\varphi_a (x;f),
\end{align*}
which equals the first equality of \eqref{expression_v_a_b}.  The second equality holds because 
integration by parts gives
\begin{align*}
\varphi_a (x;f) = \overline{W}^{(q)}(x-a) f(a) + \int_a^x \overline{W}^{(q)}(x-y) f'(y) \diff y, \quad x \geq a. 
\end{align*}

The case $x < a$ holds because $v_{a,b} (x) = v_{a, b}(a)  - C_U (x-a)$ and $Z^{(q)}(x-a) = Z^{(q)}(0) $ and $R^{(q)}(x-a) = (x-a) + R^{(q)}(0)$.  The case $x > b$ similarly holds.
\end{proof}

\subsection{Smoothness conditions}
Taking a derivative in \eqref{expression_v_a_b}, 
\begin{align} \label{v_derivative_general}
v_{a,b}' (x)&= \frac { \Gamma(a,b)} {W^{(q)}(b-a)} {W^{(q)}(x-a)} -C_U  -\varphi_a (x;\tilde{f}'), \quad a < x < b,
\end{align}
and hence by \eqref{about_gamma}
\begin{align}\label{smooth_fit_all_b}
\begin{split}
v_{a,b}' (b-)= C_D \quad \textrm{and} \quad
v_{a,b}' (a+)=   \frac {\Gamma(a,b)} {W^{(q)}(b-a)}  {W^{(q)}(0)} -C_U. 
\end{split}
\end{align}
This implies, in view of  Remark \ref{remark_smoothness_zero}(2), that the differentiability of $v_{a,b}$ at $b$ holds  for all cases while it holds at $a$ when $\Gamma(a,b)/W^{(q)}(b-a) = 0$ for the case of bounded variation and it holds automatically for the case of unbounded variation.

Taking another derivative, we have, for a.e.\ $x \in (a,b)$,
\begin{align*}
v_{a,b}'' (x)&= \frac {W^{(q)'}(x-a)} {W^{(q)}(b-a)} \Gamma(a,b)  -\int_a^x W^{(q)'}(x-y) \tilde{f}'(y) \diff y -  \tilde{f}'(x) W^{(q)}(0),
\end{align*}
and hence
\begin{align*}
v_{a,b}'' (b-) &= \frac {\Gamma(a,b) } {W^{(q)}(b-a)} {W^{(q)'}((b-a)-)}  -\gamma(a,b), \\
v_{a,b}'' (a+)&= \frac {\Gamma(a,b) }  {W^{(q)}(b-a)}  {W^{(q)'}(0+)} - \tilde{f}'(a+) W^{(q)}(0) ,
\end{align*}
where
\begin{align}
\gamma(a,b) :=  \int_a^b W^{(q)'}(b-y) \tilde{f}'(y) \diff y +  \tilde{f}'(b-) W^{(q)}(0)= \frac \partial {\partial b} \Gamma(a,b-), \quad b > a. \label{small_gamma}
\end{align}
Hence, our candidate levels $(a,b)$ are such that
\begin{align}
\frac {\Gamma(a,b)} {W^{(q)}(b-a)} &= 0,  \label{smoothness_condition1} \\
\gamma(a,b) &= 0. \label{smoothness_condition2}
\end{align}
Here we understand for the case $b =\infty$ that $\lim_{b \rightarrow \infty} \Gamma(a,b)/W^{(q)}(a,b) = 0$. In such case, in view of \eqref{expression_v_a_b}, $\lim_{b \rightarrow \infty }v_{a,b} (x) = v_{a, \infty} (x) := -C_U R^{(q)}(x-a) +  {f(a)}  Z^{(q)} (x-a) /q - \varphi_{a}(x; f)$ and hence $\lim_{b \rightarrow \infty }v_{a,b}'(x) = v_{a, \infty}'(x)$ and $\lim_{b \rightarrow \infty }v_{a,b}'' (x) = v_{a, \infty}'' (x)$ also hold.

We summarize the results as follows.
\begin{lemma} \label{lemma_smoothfit}
\begin{enumerate}
\item If \eqref{smoothness_condition1} holds for some $a < b \leq\infty$,
 then $v_{a,b}$ is differentiable (resp.\ twice-differentiable) at $a$ when $X$ is of bounded (resp.\ unbounded) variation. 
\item If in addition $b < \infty$, it is continuously differentiable at $b$.  In particular, when \eqref{smoothness_condition2} further holds, then it is twice-differentiable at $b$. \end{enumerate}
\end{lemma}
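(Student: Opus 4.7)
The plan is to verify the smooth-pasting assertions directly by matching one-sided derivatives at each of the barriers $a$ and $b$, exploiting the fact that $v_{a,b}$ is affine outside $[a,b]$. Concretely, the identities $v_{a,b}(x) = v_{a,b}(a) - C_U(x-a)$ for $x < a$ and $v_{a,b}(x) = v_{a,b}(b) + C_D(x-b)$ for $x > b$ give at once $v_{a,b}'(a-) = -C_U$, $v_{a,b}''(a-) = 0$, $v_{a,b}'(b+) = C_D$, and $v_{a,b}''(b+) = 0$. The remainder of the proof then amounts to showing that the right-hand limits at $a$ and the left-hand limits at $b$, already computed in the formulas displayed above the statement, match these.

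For part (1), I would first substitute \eqref{smoothness_condition1} into the expression for $v_{a,b}'(a+)$ in \eqref{smooth_fit_all_b} and observe that it collapses to $-C_U$ in both variation regimes, agreeing with $v_{a,b}'(a-)$; this already settles the bounded-variation case. When $X$ is of unbounded variation, I would then plug \eqref{smoothness_condition1} into the formula for $v_{a,b}''(a+)$ displayed just above \eqref{small_gamma}: the first term vanishes because the coefficient $\Gamma(a,b)/W^{(q)}(b-a)$ is zero, while the second term vanishes because $W^{(q)}(0) = 0$ by Remark \ref{remark_smoothness_zero}(2); hence $v_{a,b}''(a+) = 0 = v_{a,b}''(a-)$. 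The case $b = \infty$ is handled by appealing to the limiting identities $\lim_{b\to\infty} v_{a,b}'(x) = v_{a,\infty}'(x)$ and $\lim_{b\to\infty} v_{a,b}''(x) = v_{a,\infty}''(x)$ already recorded in the paragraph immediately preceding the lemma, which transfer the conclusion to $v_{a,\infty}$.

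For part (2), the identity $v_{a,b}'(b-) = C_D$ in \eqref{smooth_fit_all_b} is unconditional, so continuous differentiability at $b$ is immediate whenever $b$ is finite. Assuming additionally \eqref{smoothness_condition2}, I would use \eqref{smoothness_condition1} to zero out the first term in the formula for $v_{a,b}''(b-)$; what remains is $-\gamma(a,b) = 0$, matching $v_{a,b}''(b+) = 0$ and yielding twice-differentiability at $b$.

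I do not foresee any substantial obstacle: every formula needed has already been derived in the discussion preceding the statement, and the proof is essentially reading off values. The one delicate point worth flagging is the potentially indeterminate product $W^{(q)'}(0+) \cdot \Gamma(a,b)/W^{(q)}(b-a)$ in the formula for $v_{a,b}''(a+)$ when $\sigma = 0$ and $\nu(-\infty,0) = \infty$, where $W^{(q)'}(0+) = \infty$ by Remark \ref{remark_smoothness_zero}(2). This is resolved by noting that under \eqref{smoothness_condition1} the coefficient $\Gamma(a,b)/W^{(q)}(b-a)$ is \emph{identically} zero, so the corresponding term vanishes termwise in the expression for $v_{a,b}''(x)$ on $(a,b)$ before any limit $x \downarrow a$ is taken.
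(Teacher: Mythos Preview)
Your proposal is correct and mirrors the paper's own treatment: the lemma is stated immediately after the sentence ``We summarize the results as follows,'' with no separate proof, because the assertions are read off directly from the boundary derivative formulas \eqref{smooth_fit_all_b} and the displayed expressions for $v_{a,b}''(a+)$ and $v_{a,b}''(b-)$ together with Remark~\ref{remark_smoothness_zero}(2). Your handling of the potentially indeterminate product in the unbounded-variation, $\sigma=0$ case is appropriate and is the only point requiring any care.
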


\subsection{Existence of $(a^*, b^*)$}  \label{subsection_existence_a_b}
Here we show the existence of a pair $(a^*, b^*)$ where \eqref{smoothness_condition1} and \eqref{smoothness_condition2} hold simultaneously. Equivalently, we pursue $(a^*, b^*)$ such that the function $b \mapsto \Gamma(a^*, b)$ attains a minimum $0$ at $b^*$ (if $b^* < \infty$).
%

First, by \eqref{assump_C_sum}, \eqref{about_gamma} and \eqref{small_gamma},
\begin{align}
\Gamma(a,a) =  C_D+C_U > 0  \quad \textrm{and} \quad
\gamma(a,a+) =  \tilde{f}'(a+) W^{(q)}(0), \quad a \in \R. \label{Gamma_initial} 
\end{align}
Recall the definition of the level $\overline{a}$ as in Assumption \ref{assump_f_g}. Fix any  $a \geq \overline{a}$. Because $\gamma(a,b) \geq 0$ for $b > a$ in view of \eqref{small_gamma}, the function $b \mapsto \Gamma(a,b)$ starts at a positive value $\Gamma(a,a)$ and increases in $b$.  Therefore, it never crosses nor touches the x-axis.

We now start at $\overline{a}$ and decrease its value until we arrive at the desired pair $(a^*,b^*)$ such that $b \mapsto \Gamma(a^*, b)$ first touches  the x-axis at $b^*$.   

We shall first show that $\underline{a}$ as in Lemma \ref{lemma_bensoussan_result}(1) becomes a lower bound of such  $a^*$.
For any fixed $a \in \R$,
\begin{align*}
\frac {\varphi_a (b;\tilde{f}')} {W^{(q)}(b-a)} =  \int_a^b \tilde{f}'(y)  \frac { {W^{(q)}(b-y)} } {W^{(q)}(b-a)} \diff y = e^{\Phi(q)a} \int_a^b \tilde{f}'(y)  e^{-\Phi(q)y} \frac { {W_{\Phi(q)}(b-y)} } {W_{\Phi(q)}(b-a)} \diff y. 
\end{align*}
Here, by \eqref{W_q_limit}, $1_{\{ y \leq b \}} |\tilde{f}'(y)|  e^{-\Phi(q)y}  { {W_{\Phi(q)}(b-y)} } / {W_{\Phi(q)}(b-a)} \leq  |\tilde{f}'(y)|  e^{-\Phi(q)y}$ for a.e.\ $y \geq a$, which is integrable over $(a, \infty)$ by Assumption \ref{assump_f_g}(1).  Hence, by \eqref{about_gamma}, dominated convergence gives
\begin{align}
\lim_{b \rightarrow \infty} \frac {\Gamma(a,b)} {W^{(q)}(b-a)} = \Psi(a; \tilde{f}'). \label{limit_Gamma_a_b}
\end{align}
By Lemma \ref{lemma_bensoussan_result}(1), this also implies that $\lim_{b \rightarrow \infty}\Gamma(a,b) = \infty$ if $a > \underline{a}$ and $\lim_{b \rightarrow \infty}\Gamma(a,b) = -\infty$ if $a < \underline{a}$.  Therefore, for fixed $a \in (\underline{a}, \overline{a})$, the infimum $\underline{\Gamma}(a) := \inf_{b \geq a} \Gamma(a,b)$ exists and is increasing in $a$ because the (right-)derivative with respect to $a$ becomes
\begin{align}
\frac \partial {\partial a}\Gamma(a+,b) 
&= -  \tilde{f}'(a+) W^{(q)}(b-a), \quad a < b, \label{derivative_Gamma_a}
\end{align}
which is positive for $a < \overline{a}$, and for any $a' < a < \overline{a}$ (such that $\tilde{f}' < 0$ on $(a', a)$),
\begin{align*}
\underline{\Gamma} (a') &\leq \inf_{b \geq a} \Gamma(a', b) = \inf_{b \geq a} \Big(\Gamma(a,b) + \int_{a'}^a   \tilde{f}'(y) W^{(q)}(b-y) \diff y \Big) \\ &\leq  \inf_{b \geq a} \Big(\Gamma(a,b) + \int_{a'}^{(a+a')/2}   \tilde{f}'(y) W^{(q)}(b-y) \diff y \Big) \\ &\leq  \inf_{b \geq a} \Big(\Gamma(a,b) + W^{(q)}\Big(b-\frac{a+a'} 2 \Big)  \int_{a'}^{(a+a')/2}   \tilde{f}'(y) \diff y \Big) \\ &\leq   \underline{\Gamma}(a) + W^{(q)}\Big(\frac{a-a'} 2 \Big)  \int_{a'}^{(a+a')/2}   \tilde{f}'(y) \diff y < \underline{\Gamma} (a).
\end{align*}
It is also easy to see that the function $\underline{\Gamma}(a)$ is continuous on $(\underline{a}, \overline{a})$.

In view of these arguments, as we decrease the value of $a$ from $\overline{a}$ to $\underline{a}$, there are two scenarios:
\begin{enumerate}
\item The curve $b \mapsto \Gamma(a,b)$ downcrosses the x-axis for a finite $b$ for some $a \in (\underline{a}, \overline{a})$; i.e., there exists $a' \in (\underline{a}, \overline{a})$ such that $\underline{\Gamma}(a') < 0$.
\item The curve $b \mapsto \Gamma(a,b)$ is uniformly positive for any choice of $a \in (\underline{a}, \overline{a})$; i.e., $\underline{\Gamma}(a) \geq 0$ for all $a \in (\underline{a}, \overline{a})$.
\end{enumerate}

For the first scenario, due to the continuity and increasingness of $\underline{\Gamma}$ on $(\underline{a}, \overline{a})$ and because $\underline{\Gamma}(\overline{a}) = C_D + C_U > 0$, there must exist a \emph{unique} $a^* \in (\underline{a}, \overline{a})$ such that $\underline{\Gamma}(a^*) = 0$.  By calling $b^*$ the largest value of the minimizers of $\Gamma(a^*, \cdot)$, we must have that $\Gamma(a^*, b^*) = 0$.  In addition, if the function $\gamma(a^*, \cdot)$ is continuous at $b^*$  then $\gamma(a^*, b^*) = 0$ due to the property of the local minimum.  
Notice, in view of the definition of $\gamma(\cdot, \cdot)$ as in \eqref{small_gamma}, that $\gamma(a,b) < 0$ for any $a<b\leq\overline{a}$, and hence such $b^* > \overline{a}$.


For the second scenario, we have $\Gamma(a, b) \geq 0$ for any $a \in (\underline{a}, \overline{a})$ and $b \geq a$.   Taking $a \downarrow \underline{a}$, we have $\Gamma(\underline{a}, b) \geq 0$ for any $b \geq \underline{a}$.  By \eqref{limit_Gamma_a_b}, we see that $v_{\underline{a}, \infty} (x) := \lim_{b \rightarrow \infty}v_{\underline{a}, b} (x)$ equals $\tilde{v}_{\underline{a}}(x)$ as in \eqref{value_function_no_cost_case}, that is attained by the strategy $\pi_{\underline{a},\infty}$ comprising of the single barrier strategy $U^{\underline{a}, \infty}$ as in \eqref{single_reflected} and $D^\pi = D^{\underline{a}, \infty} \equiv 0$.



%

We summarize the results in the lemma below.
\begin{lemma} \label{lemma_existence}
There exist a unique $a^*$  such that $\Gamma(a^*,x) \geq 0$  for all $x \in [a^*, \infty)$, and $b^*$ (defined as the largest minimizer of $\Gamma(a^*,\cdot)$) such that either \textbf{Case 1} or \textbf{Case 2} defined below holds.
\begin{description}
\item[Case 1] $\underline{a} < a^* < \overline{a} < b^*$ and
\begin{align*}
\Gamma(a^*, b^*) = 0.
\end{align*}
Moreover, if $\gamma(a^*, b^*)$ is continuous  at $b^*$ then we also have that  $\gamma(a^*, b^*) = 0$.
\item[Case 2] $a^* = \underline{a}$ and $b^* = \infty$ and
\begin{align*}
\lim_{b \rightarrow \infty} \frac {\Gamma(\underline{a},b)} {W^{(q)}(b-\underline{a})} = 0.
\end{align*}
\end{description}
\end{lemma}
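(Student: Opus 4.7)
The plan is to analyze the one-parameter family of functions $b \mapsto \Gamma(a,b)$ as $a$ varies downward from $\overline{a}$ and locate the threshold value $a^*$ at which the infimum over $b$ first touches zero; the two cases of the lemma correspond to whether this threshold is attained in the open interval $(\underline{a},\overline{a})$ or only in the limit at $\underline{a}$.

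First, I would establish the basic boundary behavior. Using \eqref{Gamma_initial} and \eqref{assump_C_sum}, we have $\Gamma(a,a)=C_D+C_U>0$ for every $a$, while \eqref{small_gamma} gives $\partial_b\Gamma(a,b)=\gamma(a,b)$. Since $\tilde{f}'\geq 0$ on $[\overline{a},\infty)$ by Assumption \ref{assump_f_g}(2), for $a\geq\overline{a}$ we get $\gamma(a,b)\geq 0$ for all $b>a$, so $\Gamma(a,\cdot)$ is nondecreasing from a positive value and never vanishes. Thus any candidate $a^*$ satisfying $\Gamma(a^*,b^*)=0$ must lie strictly below $\overline{a}$.

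Second, I would identify the asymptotic behavior at $b=\infty$ via the scale-function version \eqref{scale_function_version}: writing
\[
\frac{\varphi_a(b;\tilde{f}')}{W^{(q)}(b-a)}=e^{\Phi(q)a}\int_a^b \tilde{f}'(y)e^{-\Phi(q)y}\frac{W_{\Phi(q)}(b-y)}{W_{\Phi(q)}(b-a)}\diff y,
\]
the monotonicity of $W_{\Phi(q)}$ together with \eqref{W_q_limit} provides a dominating integrable envelope (using the polynomial growth in Assumption \ref{assump_f_g}(1)), and dominated convergence yields \eqref{limit_Gamma_a_b}. Combined with Lemma \ref{lemma_bensoussan_result}(1) this shows $\Gamma(a,b)\to\infty$ for $a>\underline{a}$ (so the infimum $\underline{\Gamma}(a):=\inf_{b\geq a}\Gamma(a,b)$ is finite and attained) and $\Gamma(a,b)\to-\infty$ for $a<\underline{a}$.

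Third, I would establish strict monotonicity and continuity of $a\mapsto\underline{\Gamma}(a)$ on $(\underline{a},\overline{a})$. From \eqref{derivative_Gamma_a}, $\partial_a\Gamma(a+,b)=-\tilde{f}'(a+)W^{(q)}(b-a)>0$ on this range, giving nondecrease of $\underline{\Gamma}$; to sharpen to strict monotonicity I would use the chain of estimates shown in the text, which lower bounds $\underline{\Gamma}(a')$ by $\underline{\Gamma}(a)$ plus a strictly negative quantity coming from a positive lower bound of $W^{(q)}$ away from zero. Continuity follows from uniform control of $\Gamma(a,\cdot)$ in a neighborhood. Finally, the dichotomy: since $\underline{\Gamma}(\overline{a})=C_D+C_U>0$, either there exists $a'\in(\underline{a},\overline{a})$ with $\underline{\Gamma}(a')<0$, in which case the intermediate value theorem yields a unique $a^*$ with $\underline{\Gamma}(a^*)=0$, and the largest minimizer $b^*$ satisfies $\Gamma(a^*,b^*)=0$; moreover $b^*>\overline{a}$ since $\gamma(a,b)<0$ for $a<b\leq\overline{a}$. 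Otherwise $\underline{\Gamma}\geq 0$ throughout $(\underline{a},\overline{a})$, and letting $a\downarrow\underline{a}$ combined with $\Psi(\underline{a};\tilde{f}')=0$ from Lemma \ref{lemma_bensoussan_result}(1) and the limit \eqref{limit_Gamma_a_b} delivers Case 2.

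The main obstacle will be rigorously justifying the dominated-convergence argument for \eqref{limit_Gamma_a_b}, which relies on interpreting the ratio in terms of $W_{\Phi(q)}$ and exploiting both its monotonicity and the polynomial-growth restriction on $\tilde{f}'$; the strict monotonicity of $\underline{\Gamma}$ (as opposed to mere nondecrease), needed to ensure uniqueness of $a^*$, is the subtle point that requires the sharpened integral estimate rather than the bare derivative bound.
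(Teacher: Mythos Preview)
Your proposal is correct and follows essentially the same approach as the paper: it starts $a$ at $\overline{a}$ and moves downward, uses the $W_{\Phi(q)}$ representation and dominated convergence to establish \eqref{limit_Gamma_a_b}, invokes \eqref{derivative_Gamma_a} together with the sharpened integral estimate for strict monotonicity of $\underline{\Gamma}$, and then splits into the two scenarios via the intermediate value theorem. The only small omission is that you do not explicitly state the first-order conclusion $\gamma(a^*,b^*)=0$ at the interior minimizer (under continuity of $\gamma(a^*,\cdot)$ at $b^*$), but this is immediate from your setup.
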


\begin{remark} \label{remark_continuity_gamma}  In view of \textbf{Case 1} of Lemma \ref{lemma_existence}, the function $b \mapsto \gamma(a^*, b)$ is continuous  at $b^*$ if $f$ is differentiable at $b^*$ or $X$ is of unbounded variation (i.e.\ $W^{(q)}(0) = 0$ as in \eqref{eq:Wqp0}).
\end{remark}

\begin{remark}
\begin{enumerate}
\item  Because $a^* <  \overline{a}$ and $\overline{a} \rightarrow -\infty$ as $C_U \rightarrow \infty$, we must have  $a^* \rightarrow -\infty$ as $C_U \rightarrow \infty$.  On the other hand, $a^*$ must be finite when $C_U$ is finite because $a^* \geq \underline{a}$.
\item 
By \eqref{limit_Gamma_a_b} and Lemma \ref{lemma_bensoussan_result}(1), for any $a > \underline{a}$, $\underline{\Gamma}(a) - C_D - C_U = \inf_{b \geq a}\varphi_a (b;\tilde{f}') > -\infty$.  Because this does not depend on the value of $C_D$, we have $\underline{\Gamma}(a) \rightarrow \infty$ as $C_D \rightarrow \infty$.  Equivalently, for any $a > \underline{a}$, we can choose a sufficiently large $C_D$ so that $\underline{\Gamma}(a) > 0$. Hence, $a^* \rightarrow \underline{a}$ and $b^* \rightarrow \infty$ as $C_D \rightarrow \infty$.  
%
\end{enumerate}
\end{remark}

\section{Verification Lemma} \label{section_verification}
With $(a^*, b^*)$ whose existence is proved in Lemma \ref{lemma_existence}, our candidate value function becomes, by \eqref{expression_v_a_b}, for all $x \leq b^*$,
\begin{align} \label{expression_value_function}
\begin{split}
v_{a^*,b^*} (x) 
&= -C_U R^{(q)}(x-a^*) + \frac {f(a^*)} q Z^{(q)} (x-a^*) - \varphi_{a^*}(x; f) \\
&=  -C_U R^{(q)}(x-a^*) + \frac {f(a^*)} q - \int_{a^*}^x \overline{W}^{(q)}(x-y) f'(y) \diff y.
\end{split}
\end{align}
Integration by parts gives (for more details, see Lemma 4.1 of \cite{Yamazaki_2013})
\begin{align*}
\varphi_{a^*} (x;f) 
&= \varphi_{a^*} (x;\tilde{f}) - C_U \left[  a^* Z^{(q)}(x-a^*) + \overline{Z}^{(q)}(x-a^*) -x\right], \quad x \in \R.
\end{align*}
Hence we can also write
\begin{align} 
v_{a^*,b^*} (x) 
&= -C_U \Big( \frac {\psi'(0+)} q  + x \Big)+ \frac {\tilde{f}(a^*)} q Z^{(q)} (x-a^*) -  \varphi_{a^*} (x;\tilde{f}), \quad x \leq b^*. \label{value_function_simplified}
\end{align}

Let $\mathcal{L}$ be the infinitesimal generator associated with
the process $X$ applied to a sufficiently smooth function $h$
\begin{align*}
\mathcal{L} h(x) &:= c h'(x) + \frac 1 2 \sigma^2 h''(x) + \int_{(-\infty,0)} \left[ h(x+z) - h(x) -  h'(x) z 1_{\{-1 < z < 0\}} \right] \nu(\diff z), \quad x \in \R.
\end{align*}
By Lemma \ref{lemma_smoothfit} and Remarks \ref{remark_smoothness_zero}(1) and \ref{remark_continuity_gamma}, the function $v_{a^*, b^*}$ is $C^1 (\R)$ (resp.\ $C^2(\R)$) when $X$ is of bounded (resp.\ unbounded) variation.  Moreover, the integral part is well defined and finite by Assumption \ref{assump_finiteness_mu} and because $v_{a^*, b^*}$ is linear below $a^*$.
Hence, $\mathcal{L} v_{a^*,b^*} (\cdot)$ makes sense anywhere on $\R$.  

The following theorem addresses some additional conditions that are sufficient for the optimality of $v_{a^*, b^*}$.

\begin{theorem}[Verification lemma] \label{verification_lemma}
Suppose 
\begin{enumerate}
\item $-C_U \leq v_{a^*,b^*}'(x)$ for all $x \in (a^*,b^*)$,
\item $(\mathcal{L}-q)v_{a^*,b^*}(x) + f(x) \geq 0$ for all $x > b^*$.
\end{enumerate}
Then, we have
\begin{align*}
v_{a^*,b^*}(x) = \inf_{\pi \in \Pi}v^\pi (x), \quad x \in \R,
\end{align*}
and $\pi_{a^*,b^*}$ is the optimal strategy.
\end{theorem}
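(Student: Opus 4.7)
The plan is the standard verification argument for singular control: apply an It\^o-type formula to $e^{-qt}v_{a^*,b^*}(Y_t^\pi)$ for an arbitrary admissible $\pi \in \Pi$, take expectations, and pass to $t \to \infty$ to derive $v_{a^*,b^*}(x) \leq v^\pi(x)$. Since $\pi_{a^*,b^*} \in \Pi$ attains cost exactly $v_{a^*,b^*}(x)$ by \eqref{expression_v_a_b}, this lower bound yields both the optimality claim and identifies the value function.

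First, I would upgrade hypothesis $(2)$ to the global HJB-type inequality $(\mathcal{L}-q)v_{a^*,b^*}(x) + f(x) \geq 0$ for all $x \in \R$. On the continuation region $(a^*, b^*)$ this holds with equality and is built into the construction: it follows from the resolvent representation in \eqref{expression_v_a_b} together with the smoothness conditions \eqref{smoothness_condition1}--\eqref{smoothness_condition2}, so that $v_{a^*,b^*}$ solves $(\mathcal{L}-q)v + f = 0$ on $(a^*, b^*)$---the usual scale-function argument, cf.\ \cite{Yamazaki_2013}. On $(-\infty, a^*)$, where $v_{a^*,b^*}$ is affine with slope $-C_U$, the inequality must be verified explicitly using Assumption \ref{assump_f_g} on $\tilde{f}$, continuity of $(\mathcal{L}-q)v_{a^*,b^*}$ at $a^*$, and direct estimation of the L\'evy integral term against the slope $-C_U$.

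Second, I would appeal to Lemma \ref{lemma_smoothfit} and Remark \ref{remark_smoothness_zero}, by which $v_{a^*,b^*}\in C^1(\R)$ in the bounded-variation case and $C^2(\R)$ in the unbounded-variation case---exactly the smoothness needed to apply It\^o's formula (invoking the Meyer--It\^o / change-of-variables formula when $v_{a^*,b^*}$ is only $C^1$) to $e^{-qt}v_{a^*,b^*}(Y^\pi_t)$, treating $Y^\pi = X + U^\pi - D^\pi$ as a semimartingale. Decomposing the jumps into those of $X$ (absorbed into $\mathcal{L}$) and those of $U^\pi, D^\pi$, I would use a fundamental-theorem-of-calculus bound with the derivative condition $(1)$---extended to $\R$ using $v_{a^*,b^*}' \equiv -C_U$ on $(-\infty, a^*)$ and $v_{a^*,b^*}' \equiv C_D > -C_U$ on $(b^*, \infty)$ (the latter by \eqref{assump_C_sum})---to obtain $v_{a^*,b^*}(y+u) - v_{a^*,b^*}(y) \geq -C_U u$ for every $u \geq 0$, controlling the $U^\pi$ contributions. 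A companion upper bound $v_{a^*,b^*}'(x) \leq C_D$ on $(a^*,b^*)$ is required to control the $D^\pi$ contributions; I would derive it from the construction via $v_{a^*,b^*}'' = -\gamma(a^*, \cdot)$ together with the fact that $b^*$ is the largest minimizer of $\Gamma(a^*, \cdot)$, or else adjoin it to the sufficient conditions to be verified in Section \ref{section_sufficient_condition}.

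Assembling these ingredients, the It\^o expansion combined with the global HJB inequality gives, after localizing the local-martingale part,
\begin{align*}
v_{a^*,b^*}(x) \leq \E_x\bigl[e^{-qt}v_{a^*,b^*}(Y^\pi_t)\bigr] + \E_x\Bigl[\int_0^t e^{-qs}f(Y^\pi_s)\,\diff s\Bigr] + \E_x\Bigl[\int_{[0,t]}e^{-qs}\bigl(C_U\,\diff U^\pi_s + C_D\,\diff D^\pi_s\bigr)\Bigr].
\end{align*}
Letting $t \to \infty$ and using the (at most linear) growth of $v_{a^*,b^*}$ together with Assumption \ref{assump_finiteness_mu} and \eqref{feasibility_L_R} to force the first term to vanish, one obtains $v_{a^*,b^*}(x) \leq v^\pi(x)$, completing the proof. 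I expect the main obstacle to be the verification of the global HJB inequality off the continuation region---both $(\mathcal{L}-q)v_{a^*,b^*} + f \geq 0$ on $(-\infty, a^*)$ and the companion derivative bound $v_{a^*,b^*}' \leq C_D$ on $(a^*, b^*)$---since both depend delicately on the interplay between Assumption \ref{assump_f_g} and the scale-function structure; the technical care needed to apply It\^o in the only-$C^1$ case is a secondary but standard concern.
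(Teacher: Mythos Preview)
Your plan is correct and matches the paper's proof essentially step for step: the paper establishes $(\mathcal{L}-q)v_{a^*,b^*}+f=0$ on $(a^*,b^*)$ and $\geq 0$ on $(-\infty,a^*]$ (its Lemma \ref{verification1}, via $\tilde f(x)-\tilde f(a^*)\geq 0$ for $x\leq a^*$), proves the companion bound $v_{a^*,b^*}'\leq C_D$ unconditionally from \eqref{derivative_value_function} and $\Gamma(a^*,\cdot)\geq 0$ (Lemma \ref{lemma_existence}), then runs the It\^o/localization argument and shows $\E_x[e^{-q(t\wedge\tau_n)}v^+(Y^\pi_{t\wedge\tau_n})]\to 0$ by sandwiching $Y^\pi$ between $\underline X_t-D_t^\pi$ and $\overline X_t+U_t^\pi$. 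The only place you are vaguer than the paper is the terminal-term limit, and note that the $C_D$ bound is \emph{not} an extra hypothesis but a direct consequence of the construction of $(a^*,b^*)$.
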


We shall later show that the conditions (1) and (2) of  the above theorem are satisfied if the function $f$ is convex or more generally Assumption \ref{assump_dec_inc} below holds.

In order to show Theorem \ref{verification_lemma} above, we first show Lemmas \ref{verification1} and \ref{lemma_v_prime_bounded_C_D} below.  
\begin{lemma} \label{verification1}
\begin{enumerate}
\item We have $(\mathcal{L}-q)v_{a^*, b^*}(x) + f(x)= 0$ for $a^* < x < b^*$.
\item We have $(\mathcal{L}-q)v_{a^*, b^*}(x) + f(x) \geq 0$ for $ x \leq a^*$.
\end{enumerate}
\end{lemma}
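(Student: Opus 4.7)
For part (1), my plan is to exploit that the choice $\Gamma(a^*,b^*)=0$ annihilates the coefficient of $Z^{(q)}(x-a^*)$ in \eqref{expression_v_a_b}, so on $(-\infty,b^*]$ the candidate value function has exactly the analytic form of $\tilde{v}_{\underline{a}}$ in Theorem \ref{theorem_main_no_transaction} with $\underline{a}$ replaced by $a^*$. Verifying $(\mathcal{L}-q)v_{a^*,b^*}(x)+f(x)=0$ on $(a^*,b^*)$ then amounts to applying $(\mathcal{L}-q)$ term by term to \eqref{expression_value_function} and invoking three generator identities valid on $(a^*,\infty)$: $(\mathcal{L}-q)Z^{(q)}(\cdot-a^*)=0$, $(\mathcal{L}-q)R^{(q)}(\cdot-a^*)=0$, and $(\mathcal{L}-q)\varphi_{a^*}(\cdot;f)=f$. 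The first is classical; the second can be read off by noting that $C_U[\Phi(q)^{-1}Z^{(q)}(\cdot-a^*)-R^{(q)}(\cdot-a^*)]$ is the value of a single-reflection problem at barrier $a^*$ with $f\equiv 0$ and therefore satisfies the homogeneous equation on $(a^*,\infty)$; the third follows from the decomposition $\varphi_{a^*}(\cdot;f)=\Psi(a^*;f)W^{(q)}(\cdot-a^*)-u(\cdot)$, where $u(x):=\E_x[\int_0^{\tau_{a^*}^-}e^{-qt}f(X_t)\ud t]$ is the $q$-resolvent of $X$ killed on exit from $(a^*,\infty)$ and hence satisfies $(\mathcal{L}-q)u=-f$ there.

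For part (2), the key observation is that $v_{a^*,b^*}$ is affine with slope $-C_U$ on $(-\infty,a^*]$, since activating $\pi_{a^*,b^*}$ from $x<a^*$ instantaneously lifts the process to $a^*$ at cost $C_U(a^*-x)$. Because $x+z\leq a^*$ whenever $x\leq a^*$ and $z<0$, only this linear branch enters the nonlocal integral in $\mathcal{L}v_{a^*,b^*}(x)$, and a direct computation using the identity
\begin{align*}
\psi'(0+)=c+\int_{(-\infty,-1]}z\,\nu(\ud z),
\end{align*}
obtained by differentiating \eqref{laplace_spectrally_positive}, yields $\mathcal{L}v_{a^*,b^*}(x)=-C_U\psi'(0+)$ for $x<a^*$. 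Substituting the explicit boundary value
\begin{align*}
v_{a^*,b^*}(a^*)=-C_U R^{(q)}(0)+\frac{f(a^*)}{q}Z^{(q)}(0)-\varphi_{a^*}(a^*;f)=\frac{f(a^*)-C_U\psi'(0+)}{q},
\end{align*}
which uses $R^{(q)}(0)=\psi'(0+)/q$, $Z^{(q)}(0)=1$, and $\varphi_{a^*}(a^*;f)=0$, collapses the expression for $(\mathcal{L}-q)v_{a^*,b^*}(x)+f(x)$ to
\begin{align*}
(\mathcal{L}-q)v_{a^*,b^*}(x)+f(x)=\tilde{f}(x)-\tilde{f}(a^*),\qquad x\leq a^*.
\end{align*}
Since Lemma \ref{lemma_existence} gives $a^*<\overline{a}$ in both \textbf{Case 1} and \textbf{Case 2}, and Assumption \ref{assump_f_g}(2) makes $\tilde{f}$ strictly decreasing on $(-\infty,\overline{a})$, we have $\tilde{f}(x)\geq\tilde{f}(a^*)$ for $x\leq a^*$, which gives part (2).

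The step I expect to require the most care is the term-by-term computation of $(\mathcal{L}-q)$ on the three summands of $v_{a^*,b^*}$ in part (1): while the needed identities on the scale functions are well established in the literature, one must check that the full action of $\mathcal{L}$ on these functions (which are zero-extended to the left of $a^*$) matches the identities derived on $(a^*,\infty)$. This is transparent in the unbounded variation case where $v_{a^*,b^*}\in C^2(\R)$ by Lemma \ref{lemma_smoothfit} together with Remarks \ref{remark_smoothness_zero}(1) and \ref{remark_continuity_gamma}; in the bounded variation case $v_{a^*,b^*}$ is only $C^1(\R)$ globally, but the generator then has no second-derivative term and the nonlocal integral is controlled by $\int_{(-1,0)}|z|\nu(\ud z)<\infty$, so the same reasoning carries through.
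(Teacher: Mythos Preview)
Your proposal is correct and follows essentially the same approach as the paper: in both parts you arrive at exactly the paper's computation, namely applying $(\mathcal{L}-q)$ term-by-term to \eqref{expression_value_function} using $(\mathcal{L}-q)Z^{(q)}(\cdot-a^*)=(\mathcal{L}-q)R^{(q)}(\cdot-a^*)=0$ and $(\mathcal{L}-q)\varphi_{a^*}(\cdot;f)=f$ for part (1), and reducing part (2) to $\tilde{f}(x)-\tilde{f}(a^*)\geq 0$ via the linearity of $v_{a^*,b^*}$ below $a^*$. The paper simply cites these three generator identities (to \cite{Bayraktar_2012} and \cite{Egami-Yamazaki-2010-1}) rather than re-deriving them, and for part (2) it reads the affine form directly off \eqref{value_function_simplified} instead of computing the boundary value at $a^*$; your resolvent decomposition $\varphi_{a^*}(\cdot;f)=\Psi(a^*;f)W^{(q)}(\cdot-a^*)-u(\cdot)$ is a clean self-contained justification of the third identity that the paper omits.
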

\begin{proof}
As in the proof of  Theorem 2.1 in \cite{Bayraktar_2012},
%
$(\mathcal{L}-q) Z^{(q)}(y-a^*) = (\mathcal{L}-q) R^{(q)}(y-a^*) = 0$ for any $a^* < y < b^*$. 
On the other hand, as in the proof of Lemma 4.5 of \cite{Egami-Yamazaki-2010-1}, $(\mathcal{L}-q) \varphi_{a^*} (x;f) = f(x)$.
Hence in view of  \eqref{expression_value_function},  (1) is proved.

 For (2), by \eqref{value_function_simplified}, $v_{a^*, b^*}(x) =  [-C_U {\psi'(0+)}  + {\tilde{f}(a^*)}] / q- C_U x$, for $x < a^*$, and hence $(\mathcal{L}-q)v_{a^*, b^*}(x) + f(x) = \tilde{f}(x) - \tilde{f}(a^*)$.
This is positive by $x \leq a^* < \overline{a}$ and Assumption \ref{assump_f_g}(2), as desired.
\end{proof}

By \eqref{about_gamma} and \eqref{v_derivative_general},
\begin{align}
v_{a^*,b^*}'(x) = -\Gamma(a^*,x) + C_D, \quad a^* \leq x \leq b^*. \label{derivative_value_function}
\end{align}

\begin{lemma}  \label{lemma_v_prime_bounded_C_D} For all $x \in \R$, we have $v_{a^*,b^*}'(x) \leq C_D$.
\end{lemma}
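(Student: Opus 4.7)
The plan is to prove the inequality by checking the three regions $x<a^*$, $a^*\le x\le b^*$, and $x>b^*$ separately, using the closed-form expressions for $v_{a^*,b^*}$ already derived and the fact that $\Gamma(a^*,\cdot)$ is nonnegative on $[a^*,\infty)$ by Lemma \ref{lemma_existence}.

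First, for $x < a^*$, the relation $v_{a^*,b^*}(x) = v_{a^*,b^*}(a^*) - C_U (x-a^*)$ recorded just after \eqref{expression_v_a_b} gives $v_{a^*,b^*}'(x) = -C_U$, and by the standing assumption \eqref{assump_C_sum} we have $-C_U < C_D$. Next, for $x > b^*$ (only relevant in Case 1 of Lemma \ref{lemma_existence}, where $b^*<\infty$), the identity $v_{a^*,b^*}(x) = v_{a^*,b^*}(b^*) + C_D(x-b^*)$ yields $v_{a^*,b^*}'(x) = C_D$, so the inequality holds with equality.

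For the middle region $a^* \le x \le b^*$, I would invoke \eqref{derivative_value_function}, which reads $v_{a^*,b^*}'(x) = -\Gamma(a^*,x) + C_D$. In Case 1 of Lemma \ref{lemma_existence}, we have $\Gamma(a^*,x)\ge 0$ for every $x\in[a^*,\infty)$ by the very construction of $a^*$ (it is the unique value at which the envelope $\underline{\Gamma}$ vanishes). In Case 2, the same nonnegativity holds for $x\in[\underline{a},\infty)=[a^*,\infty)$, and the identity \eqref{derivative_value_function} carries over in the limit $b^*\to\infty$ together with the limit statement for $v_{a,b}$ recorded just before Lemma \ref{lemma_smoothfit}. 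Either way, $-\Gamma(a^*,x)\le 0$, which gives $v_{a^*,b^*}'(x) \le C_D$.

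There is no real obstacle here; the lemma is essentially a bookkeeping consequence of Lemma \ref{lemma_existence} combined with \eqref{derivative_value_function}. The only point requiring minimal care is the region $x>b^*$ in Case 2, which is vacuous since $b^*=\infty$, and the matching of left/right derivatives at $a^*$ and $b^*$, which is ensured by Lemma \ref{lemma_smoothfit} so that the assertion is unambiguous at the boundary points.
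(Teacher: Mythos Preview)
Your proof is correct and follows essentially the same approach as the paper: split into the three regions $(-\infty,a^*)$, $[a^*,b^*]$, and $(b^*,\infty)$, use \eqref{derivative_value_function} together with $\Gamma(a^*,\cdot)\ge 0$ from Lemma~\ref{lemma_existence} on the middle region, and use the linearity with slopes $-C_U$ and $C_D$ on the outer regions. Your additional remarks on Case~2 and on the boundary points are fine but not needed beyond what the paper records.
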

\begin{proof}
By Lemma \ref{lemma_existence}, we must have $\Gamma(a^*,x) \geq 0$ on $[a^*, b^*]$ and hence  in view of \eqref{derivative_value_function}, this inequality holds for $x \in [a^*, b^*]$.  For $x \in (-\infty, a^*)$, we have  $v_{a^*,b^*}'(x) = - C_U$, which is smaller than $C_D$ by \eqref{assump_C_sum}.  Finally, for $x \in (b^*, \infty)$, we have  $v_{a^*,b^*}'(x) = C_D$.
\end{proof}

We are now ready to give a proof for Theorem \ref{verification_lemma}.

\begin{proof}[Proof of Theorem \ref{verification_lemma}] As a short-hand notation, let $v \equiv v_{a^*,b^*}$ in this proof.  By \eqref{assump_C_sum}, Lemma \ref{lemma_v_prime_bounded_C_D} and the assumption (1), 
\begin{align}
- C_U \leq v'(x) \leq C_D, \quad x \in \R. \label{v_derivative_bounded}
\end{align}

As discussed in the introduction, we can focus on the strategy $\pi \in \Pi$ such that $U_t^\pi$ and $D_t^\pi$ are not increased simultaneously.  Fix any such admissible strategy $\pi \in \Pi$.  Thanks to the smoothness of $v$ described above, It\^o's formula (see, e.g., page 78 of \cite{MR2273672}) gives
 \begin{equation*}
 v(Y_t^{\pi})-v(Y_{0-}^{\pi})= \int_{[0,t]} v'(Y_{s-}^{\pi})\diff Y_s^{\pi}+\frac{\sigma^2}{2}\int_0^t v'' (Y_{s}^{\pi}) \diff s + \sum_{0 \leq s\leq t}[v(Y_s^{\pi})-v(Y_{s-}^{\pi})-v'(Y_{s-}^{\pi})\Delta Y_s^{\pi}].
 \end{equation*}
Define the difference of the control processes $\xi_t^{\pi} := U_t^{\pi} - D_t^{\pi}, \; t \geq 0$.
Then $Y_{t}^\pi = X_t + \xi_t^\pi$ and
\begin{align*}
\Delta \xi_t^\pi = \left\{ \begin{array}{ll} \Delta U_t^{\pi}, & \textrm{if } \Delta \xi_t^\pi \geq 0, \\- \Delta D_t^{\pi}, & \textrm{if }  \Delta \xi_t^\pi < 0, \end{array} \right. \quad \textrm{and}  \quad \diff \xi^{\pi,c}_t = \left\{ \begin{array}{ll} \diff U_t^{\pi,c}, & \textrm{if }  \diff \xi_t^{\pi,c} \geq 0, \\- \diff D_t^{\pi,c}, & \textrm{if } \diff \xi_t^{\pi,c} < 0, \end{array} \right.
\end{align*}
where we denote $\Delta \zeta_t := \zeta_t - \zeta_{t-}$ and $\zeta^c$ as the continuous part of a process $\zeta$.
We have
\begin{align*}
 \int_{[0,t]} v'(Y_{s-}^{\pi})\diff Y_s^{\pi}&=\int_{[0,t]} v'(Y_{s-}^{\pi}) \diff X_s+\int_0^t v'(Y_{s}^{\pi}) \diff \xi_s^{\pi,c} + \sum_{0 \leq s\leq t}v'(Y_{s-}^{\pi})\Delta \xi_s^\pi. 
 \end{align*}

 From the L\'evy-It\^o decomposition theorem (e.g., Theorem 2.1 of \cite{Kyprianou_2006}), we know that
 $$
 X_t=(\sigma B_t +c t)+  \left(\int_{[0,t]}\int_{(-\infty,-1]} y N(\diff s\times \diff y)\right)+\left(\lim_{\varepsilon\downarrow 0}\int_{[0,t]}\int_{(-1,-\varepsilon)}y(N(\diff s\times \diff y)-\nu(\diff y) \diff s)\right),
 $$
 where $B_t$ is a standard Brownian motion and $N$ is a Poisson random measure in   the measurable space  $([0,\infty)\times(-\infty,0),\B [0,\infty)\times \B (-\infty,0), \diff t\times\nu( \diff x)).$ The last term is a square integrable martingale, to which the limit converges uniformly on any compact $[0,T]$.

 Using this decomposition
%
and defining $A_t^\pi:=Y_{t-}^\pi+\Delta X_t$, $t\geq 0$ (so that $A_t^\pi+\Delta \xi_t^\pi=Y_{t}^\pi$), integration by parts gives (see, e.g., the proof of Theorem 3.1 of \cite{Hernandez_Yamazaki_2013} for details),
\begin{align*}
 e^{-q t}v(Y_t^{\pi})-v(Y_{0-}^{\pi}) 
 &= \int_0^t e^{-qs}(\mathcal{L}-q)v(Y_{s}^\pi)\diff s+J_t +M_t,
 \end{align*}
 with
\begin{align*}
J_t &:=\int_0^t e^{-qs}v'(Y_{s}^\pi)\diff U_s^{\pi,c}+\sum_{0\leq s\leq t}e^{-qs}[v(A_{s}^\pi+\Delta U_s^\pi)-v(A_{s}^\pi)] 1_{\{\Delta U_s^\pi> 0\}} \\
&-\int_0^t e^{-qs}v'(Y_{s}^\pi)\diff D_s^{\pi,c}+\sum_{0\leq s\leq t}e^{-qs}[v(A_{s}^\pi-\Delta D_s^\pi)-v(A_{s}^\pi)] 1_{\{\Delta D_s^\pi > 0\}}, \\
 M_t &:= \int_0^t \sigma e^{-qs} v'(Y_{s}^{\pi}) \diff B_s +\lim_{\varepsilon\downarrow 0}\int_{[0,t]} \int_{(-1,-\varepsilon)} e^{-qs}v'(Y_{s-}^{\pi})y (N(\diff s\times \diff y)-\nu(\diff y) \diff s)\\
 &+\int_{[0,t]} \int_{(-\infty,0)}e^{-qs}(v(Y_{s-}^\pi+y)-v(Y_{s-}^\pi)-v'(Y_{s-}^\pi)y1_{\{y\in(-1,0)\}})(N(\diff s\times \diff y)-\nu(\diff y) \diff s).
 \end{align*}
By \eqref{v_derivative_bounded}, we have the inequality: $J_t 
\geq - C_U \int_{[0,t]} e^{-qs}\diff U_s^\pi - C_D \int_{[0,t]} e^{-qs}\diff D_s^\pi$.  Moreover, by  Lemma \ref{verification_lemma}(2) and the assumption (2) of this theorem, $(\mathcal{L}-q)v(x) \geq - f(x)$ for all $x \in \R$.

Let $\tau_n^\pi := \inf \{ t \geq 0: |Y_{t}^\pi| > n\}$, $n > 0$.
Optional sampling gives 
$$
v(x)\leq  \E_x \left[\int_0^{t \wedge \tau_n^\pi}e^{-q s} f(Y_{s}^\pi)\diff s+C_U \int_{[0,t \wedge \tau_n^\pi]}e^{ -qs}\diff  U^\pi_s+C_D \int_{[0,t \wedge \tau_n^\pi]}e^{ -qs}\diff  D^{\pi}_s+ e^{-q (t \wedge \tau_n^\pi)}v(Y_{t \wedge \tau_n^\pi }^\pi)\right].
$$
By \eqref{def_f_tilde}, $\E_x [\int_0^{t \wedge \tau_n^\pi}e^{-q s} f(Y_{s}^\pi)\diff s ] = \E_x [\int_0^{t \wedge \tau_n^\pi}e^{-q s} \tilde{f}(Y_{s}^\pi)\diff s ]  - C_U \E_x [\int_0^{t \wedge \tau_n^\pi} q e^{-q s} Y_{s}^\pi\diff s ]$.
Because $\widetilde{f}$ admits a global minimum at $\overline{a}$ by Assumption \ref{assump_f_g}(2) (and hence is bounded from below),  dominated convergence applied to the negative part of the integrand and monotone convergence for the other part give
\begin{align*}
\lim_{t, n \uparrow \infty}\E_x \Big[\int_0^{t \wedge \tau_n^\pi}e^{-q s} \tilde{f}(Y_{s}^\pi)\diff s \Big] = \E_x \Big[\int_0^{\infty}e^{-q s} \tilde{f}(Y_{s}^\pi)\diff s \Big].
\end{align*}
On the other hand, because $(Y_{s}^\pi)^-  \leq - \underline{X}_s + |x|+ D_s^\pi$ (where we define $\underline{X}_t := \inf_{0 \leq t' \leq t} X_{t'}$, $t \geq 0$, as the running infimum process),
\begin{align} \label{bound_Y_negative_part_integral}
\begin{split}
\int_0^{t \wedge \tau_n^\pi}q e^{-q s} (Y_{s}^\pi)^- \diff s  &\leq  -\int_0^{t \wedge \tau_n^\pi}q e^{-q s} (\underline{X}_{s} - |x| )\diff s + \int_0^{t \wedge \tau_n^\pi}q e^{-q s} D_{s}^\pi\diff s \\
&\leq  -\int_0^{\infty}q e^{-q s} \underline{X}_{s}\diff s +   {|x|} + \int_0^{\infty}q e^{-q s} D_{s}^\pi\diff s.
\end{split}
\end{align}
Notice that 
\begin{align}
- \E \Big[ \int_0^{\infty}q e^{-q s} \underline{X}_{s}\diff s \Big]= \Phi(q)^{-1} - \frac {\psi'(0+)} q \label{expectation_inf_process}
\end{align}
  by the duality and the Wiener-Hopf factorization (see, e.g., the proof of Lemma 4.4 of \cite{Hernandez_Yamazaki_2013}), which is finite by Assumption \ref{assump_finiteness_mu}.


 Integration by parts gives $\E_x [ \int_{[0,t]}e^{ -qs}\diff  D^{\pi}_s ] = \E_x [e^{-qt} D_t^\pi] + \E_x [ \int_0^t q e^{-qs} D_s^\pi \diff s ] \geq \E_x [ \int_0^t q e^{-qs} D_s^\pi \diff s ]$.
Hence, by \eqref{feasibility_L_R},
\begin{align}
\infty > \E_x \left[ \int_{[0,\infty)}e^{ -qs}\diff  D^{\pi}_s \right]  \geq \E_x \left[ \int_0^\infty q e^{-qs} D_s^\pi \diff s \right]. \label{L_cumulative}
\end{align}
This together with \eqref{bound_Y_negative_part_integral} and \eqref{expectation_inf_process} gives $\lim_{t, n \uparrow \infty}\E_x \big[ \int_0^{t \wedge \tau_n^\pi}q e^{-q s} (Y_{s}^\pi)^- \diff s \big] = \E_x \big[ \int_0^{\infty}q e^{-q s} (Y_{s}^\pi)^- \diff s \big] < \infty$. Similar arguments show that $\lim_{t, n \uparrow \infty}\E_x \big[ \int_0^{t \wedge \tau_n^\pi}q e^{-q s} (Y_{s}^\pi)^+ \diff s \big] = \E_x \big[ \int_0^{\infty}q e^{-q s} (Y_{s}^\pi)^+ \diff s \big] < \infty$.

By these and the monotonicity of $D_t^\pi$ and $U_t^\pi$ in $t$, monotone convergence gives  a bound:
\begin{align}
\begin{split}
v(x) = v(U^\pi_{0-}) &\leq  \E_x \left[\int_0^{\infty}e^{-q s} f(Y_{s}^\pi)\diff s+C_U \int_{[0,\infty)}e^{ -qs}\diff  U^\pi_s+C_D \int_{[0,\infty)}e^{ -qs}\diff  D^{\pi}_s \right] \\ &+ \limsup_{t, n \rightarrow \infty} \E_x \left[e^{-q (t \wedge \tau_n^\pi)}v^+(Y_{t \wedge \tau_n^\pi}^\pi)\right]. \label{v_bound_semifinal}
\end{split}
\end{align}

It remains to show that the last term of the right hand side vanishes.  Indeed, we have $\underline{X}_t - D_t^\pi \leq Y_{t \wedge \tau_n^\pi}^\pi \leq \overline{X}_t + U_t^\pi$,  $t \geq 0$ (where we define $\overline{X}_t := \sup_{0 \leq t' \leq t} X_{t'}$, $t \geq 0$, as the running supremum process).
In view of this and \eqref{v_derivative_bounded},  it is sufficient to show that $\E_x [ e^{-qt}(\overline{X}_t + U_t^\pi)]$ and $\E_x [e^{-qt} (-\underline{X}_t + D_t^\pi)]$  vanish in the limit.  

First, as in Lemma 3.3 and Remark 3.2 of \cite{Hernandez_Yamazaki_2013},  $\E_x [e^{-qt}\overline{X}_t]$ and $\E_x [e^{-qt} \underline{X}_t ]$ vanish in the limit as $t \rightarrow \infty$.
On the other hand, by \eqref{L_cumulative} and the monotonicity of $t \mapsto D_t^\pi$,
\begin{align*}
0 = \lim_{t \rightarrow \infty}\E_x \left[ \int_{t}^\infty q e^{-qs} D_s^\pi \diff s \right] \geq \limsup_{t \rightarrow \infty}\E_x \left[ D_{t}^\pi \int_{t }^\infty q e^{-qs}  \diff s \right] = \limsup_{t \rightarrow \infty}\E_x \Big[ e^{-qt} D_{t}^\pi \Big] \geq 0.
\end{align*}
Similarly, $\lim_{t\rightarrow \infty} \E_x [ e^{-q t} U_{t}^\pi ] = 0$ also holds.

These together with \eqref{v_bound_semifinal} show $v(x) \leq v^\pi (x)$ for all $\pi \in \Pi$.  We also have $v(x) \geq \inf_{\pi \in \Pi}v^\pi(x)$ because $v$ is attained by an admissible strategy $\pi_{a^*,b^*} \in \Pi$.  This completes the proof.
\end{proof}

Showing the conditions (1) and (2) of Theorem \ref{verification_lemma} above is the most challenging task of this problem.  However, \textbf{Case 2} 
(i.e.\ $a^* = \underline{a}$ and $b^* = \infty$) can be handled easily; we defer the discussion on \textbf{Case 1} to the next section. 
\begin{theorem}
In \textbf{Case 2}, we have $v_{\underline{a},\infty}(x) = \inf_{\pi \in \Pi}v^\pi (x)$, $x \in \R$,
and $\pi_{\underline{a},\infty}$ is the optimal strategy.
\end{theorem}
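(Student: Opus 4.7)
The plan is to apply the verification lemma (Theorem \ref{verification_lemma}) with $a^* = \underline{a}$ and $b^* = \infty$. Since there is no $x > b^* = \infty$, condition (2) of Theorem \ref{verification_lemma} is vacuous, so only the gradient bound $v_{\underline{a},\infty}'(x) \geq -C_U$ for all $x \in (\underline{a}, \infty)$ needs to be verified. The smoothness of $v_{\underline{a}, \infty}$ required by the verification lemma holds by Lemma \ref{lemma_smoothfit} (the condition \eqref{smoothness_condition1} holds in the limit sense that defines Case 2), and Lemma \ref{verification1} already supplies the HJB equality/inequality on $(-\infty, b^*)$, so there is nothing more to check away from condition (1).

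The crucial observation is that, as noted immediately after \eqref{value_function_no_cost_case}, the candidate value coincides with the one-sided optimum $v_{\underline{a}, \infty} = \tilde{v}_{\underline{a}}$. From Theorem \ref{theorem_main_no_transaction}, $\tilde{v}_{\underline{a}}$ is the value function of the one-sided problem, so a standard dynamic-programming argument yields the first-order bound $\tilde{v}_{\underline{a}}'(x) \geq -C_U$: for any $\epsilon > 0$, the admissible strategy that adds $\epsilon$ to $U$ at $t = 0$ and then follows the optimal one-sided strategy started from $x + \epsilon$ has cost $C_U \epsilon + \tilde{v}_{\underline{a}}(x+\epsilon)$, so by optimality $\tilde{v}_{\underline{a}}(x) \leq C_U \epsilon + \tilde{v}_{\underline{a}}(x+\epsilon)$, and letting $\epsilon \downarrow 0$ gives $\tilde{v}_{\underline{a}}'(x+) \geq -C_U$. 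This supplies condition (1), and Theorem \ref{verification_lemma} then delivers $v_{\underline{a}, \infty}(x) = \inf_{\pi \in \Pi} v^\pi(x)$ with optimal strategy $\pi_{\underline{a}, \infty}$.

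The main subtlety is that condition (1) is not a direct consequence of the defining Case 2 inequality $\Gamma(\underline{a}, b) \geq 0$ (which only yields the complementary bound $v_{\underline{a}, \infty}'\leq C_D$ via Lemma \ref{lemma_v_prime_bounded_C_D}); one must instead invoke the established one-sided optimality of $\tilde{v}_{\underline{a}}$. An alternative verification of condition (1) proceeds by direct calculation: from \eqref{derivative_value_function} in the limit $b^* \to \infty$ (using $\Gamma(\underline{a}, b)/W^{(q)}(b-\underline{a}) \to 0$), we obtain $v_{\underline{a}, \infty}'(x) = -C_U - \varphi_{\underline{a}}(x; \tilde{f}')$, so condition (1) reduces to $\varphi_{\underline{a}}(x; \tilde{f}') \leq 0$ for all $x > \underline{a}$; this can be shown by splitting the integral defining $\varphi_{\underline{a}}$ at $\overline{a}$ and exploiting the monotonicity of $W_{\Phi(q)}$ together with the identity $\Psi(\underline{a}; \tilde{f}') = 0$ from Lemma \ref{lemma_bensoussan_result}(1).
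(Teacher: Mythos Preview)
Your proposal is correct. Your alternative approach at the end is exactly the paper's proof: the paper writes $v_{\underline{a},\infty}'(x) = -\Gamma(\underline{a},x)+C_D = -C_U - \varphi_{\underline{a}}(x;\tilde{f}')$ via \eqref{derivative_value_function} and then invokes the nonpositivity of $\varphi_{\underline{a}}(x;\tilde{f}')$, for which it cites Proposition 7.4 of \cite{Yamazaki_2013} rather than spelling out the splitting-at-$\overline{a}$ argument you sketch (your sketch is precisely how that proposition is proved).

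Your primary approach, however, is genuinely different and worth noting: instead of computing the derivative explicitly, you exploit the identification $v_{\underline{a},\infty}=\tilde{v}_{\underline{a}}$ with the one-sided value function from Theorem \ref{theorem_main_no_transaction} and extract the gradient bound $\tilde{v}_{\underline{a}}'\geq -C_U$ from a one-step dynamic programming comparison. This is more conceptual and sidesteps any analysis of $\varphi_{\underline{a}}(\cdot;\tilde{f}')$; it works for any one-sided value function with proportional cost, independent of the specific L\'evy structure or scale-function formulas. The paper's route, by contrast, is purely computational and keeps everything at the level of the explicit scale-function representation, which is consistent with how the rest of the verification is carried out (e.g., Lemma \ref{lemma_v_prime_bounded_C_D} and Section \ref{section_sufficient_condition}). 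Both approaches are short here; yours generalizes more readily, while the paper's stays self-contained within the scale-function calculus already set up.
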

\begin{proof}
In view of the conditions for  Theorem \ref{verification_lemma}, we only need to show the condition (1).  By \eqref{derivative_value_function}, $v_{\underline{a}, \infty}'(x) = -\Gamma(\underline{a},x) + C_D =  - C_U - \varphi_{\underline{a}} (x;\tilde{f}')$.
The proof is complete because $\varphi_{\underline{a}} (x;\tilde{f}')$ is nonpositive by the proof of Proposition 7.4 of \cite{Yamazaki_2013}.
\end{proof}

\section{Sufficient condition for optimality for Case 1} \label{section_sufficient_condition} We shall now investigate a sufficient optimality condition for \textbf{Case 1}  so that the assumptions in Theorem \ref{verification_lemma} are satisfied.    Throughout this section, we assume \textbf{Case 1}  and the following. 
\begin{assump} \label{assump_dec_inc}We assume that, for every $a < \overline{a}$, there exists $\tilde{b}(a)  \in (a, \infty]$ such that
\begin{align*}
\gamma(a, b) \leq  (>) 0 \Longleftrightarrow b  < (>) \tilde{b}(a), \quad b \geq a.
\end{align*}
\end{assump}
Equivalently, this assumption says that the function $b \mapsto \Gamma(a, b)$ is first nonincreasing and then  increasing (or nonincreasing monotonically), given $a < \overline{a}$; note from \eqref{small_gamma} that $\gamma(a,b) < 0$ for $a < b < \overline{a}$ and hence the function must first decrease.

As an important condition where Assumption  \ref{assump_dec_inc} holds, we show the following.  It is noted that majority of related control problems assume the convexity of $f$; see, e.g., \cite{dai2013brownian1, dai2013brownian,MR716123}.

\begin{theorem} \label{theorem_convex} If $f$ is convex, then Assumption \ref{assump_dec_inc} holds.
\end{theorem}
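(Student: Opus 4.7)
The plan is to prove Assumption \ref{assump_dec_inc} by establishing a stronger monotonicity statement: for every fixed $a<\overline{a}$, the map $b\mapsto\gamma(a,b)/W^{(q)}(b-a)$ is nondecreasing on $(a,\infty)$. Since $W^{(q)}(b-a)>0$ on $(a,\infty)$, the sign of $\gamma(a,b)$ coincides with the sign of this ratio, so once it becomes positive it must remain positive. One then sets $\tilde b(a):=\inf\{b>a:\gamma(a,b)>0\}$ with the convention $\inf\emptyset=\infty$; the observation made just after Assumption \ref{assump_dec_inc}, namely $\gamma(a,b)<0$ on $(a,\overline{a}]$, guarantees $\tilde b(a)\geq\overline{a}>a$, so $\tilde b(a)\in(a,\infty]$ as required.

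The first ingredient is a clean representation for $\gamma(a,b)$. Convexity of $f$ is inherited by $\tilde f$, so the right-continuous version of $\tilde f'$ is nondecreasing and its Stieltjes measure $\tilde f''(\diff z)$ is a nonnegative measure. Writing $\tilde f'(y)=\tilde f'(a+)+\tilde f''((a,y])$ and applying Fubini in \eqref{small_gamma}, the $W^{(q)}(0)$ contributions arising from the primitive $\int_a^b W^{(q)'}(b-y)\diff y=W^{(q)}(b-a)-W^{(q)}(0)$ cancel against the explicit boundary term $\tilde f'(b-)W^{(q)}(0)$, yielding
\begin{equation*}
\gamma(a,b)=\tilde f'(a+)\,W^{(q)}(b-a)+\int_{(a,b]}W^{(q)}(b-z)\,\tilde f''(\diff z).
\end{equation*}
Dividing by $W^{(q)}(b-a)$ confines the $b$-dependence to the ratio $W^{(q)}(b-z)/W^{(q)}(b-a)$ inside the integrand and to the upper endpoint of integration.

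The second, and key, step is to show that for each fixed $z>a$, the map $b\mapsto W^{(q)}(b-z)/W^{(q)}(b-a)$ is nondecreasing on $(z,\infty)$. By Remark \ref{remark_smoothness_zero}(3), $W^{(q)'}/W^{(q)}$ is nonincreasing on $(0,\infty)$, i.e.\ $\log W^{(q)}$ is concave there, so $(\log W^{(q)})'$ is nonincreasing. For $z<b<b'$, the logarithm of the ratio of the two quotients rearranges as $\int_{b-z}^{b'-z}(\log W^{(q)})'(u)\diff u-\int_{b-a}^{b'-a}(\log W^{(q)})'(u)\diff u$, and since $z>a$ the first interval lies strictly to the left of the second (both having the same length $b'-b$), monotonicity of $(\log W^{(q)})'$ makes this difference nonnegative. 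Combined with nonnegativity of $\tilde f''$ and the fact that the integration range $(a,b]$ grows with $b$, this yields the desired monotonicity of $\gamma(a,b)/W^{(q)}(b-a)$ and completes the plan.

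The main obstacle I anticipate is the bookkeeping in the first step: in the bounded variation case $W^{(q)}(0)=1/\delta>0$, so the boundary contributions in the Fubini exchange do not disappear by themselves and must be matched exactly to the stray term $\tilde f'(b-)W^{(q)}(0)$ in \eqref{small_gamma}. There is also a minor regularity issue: under Assumption \ref{assump_f_g}(1), $\tilde f'$ need only be piecewise continuous, so the right-continuous version of $\tilde f'$ must be used consistently and the atoms of $\tilde f''$ at the breakpoints treated on the same footing as the absolutely continuous part; once this is done the log-concavity argument is the only analytic input and carries through verbatim.
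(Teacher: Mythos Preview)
Your proposal is correct and follows essentially the same route as the paper: rewrite $\gamma(a,b)$ via integration by parts as $\tilde f'(a+)W^{(q)}(b-a)$ plus an integral of $W^{(q)}(b-\cdot)$ against the (nonnegative) second-derivative measure of $\tilde f$, then use the log-concavity of $W^{(q)}$ from Remark~\ref{remark_smoothness_zero}(3) to conclude that $b\mapsto\gamma(a,b)/W^{(q)}(b-a)$ is nondecreasing. The only cosmetic differences are that the paper keeps the absolutely continuous part and the atoms of $\tilde f''$ separate and differentiates the ratio $W^{(q)}(b-y)/W^{(q)}(b-a)$ explicitly (your integral comparison is equivalent); also, your Fubini yields integration over $(a,b)$ rather than $(a,b]$, matching the left-derivative convention in \eqref{small_gamma}, but this does not affect the monotonicity conclusion.
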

\begin{proof}
Fix $a < \overline{a}$.  
Integration by parts applied to \eqref{small_gamma} gives, for all $b > a$,
\begin{align}
\gamma(a,b) = \int_a^b W^{(q)}(b-y) \tilde{f}''(y) \diff y + \tilde{f}'(a+) W^{(q)} (b-a) + \sum_{a < y < b} W^{(q)}(b-y) [\tilde{f}'(y+) - \tilde{f}'(y-)], \label{gamma_in_terms_of_second}
\end{align}
where $\tilde{f}''(y)$ exists a.e.\ on $(a,b)$ by the convexity of $f$.
First, because $\tilde{f}'(x+) - \tilde{f}'(x-) \geq 0$ for any $x \in \R$ by the convexity of $f$, $\gamma(a,b+) - \gamma(a,b-) \geq 0$.

Second, dividing both sides of \eqref{gamma_in_terms_of_second} by  $W^{(q)} (b-a)$ and  taking a derivative with respect to $b$, we have for a.e. $b > a$,
\begin{align*}
\frac \partial {\partial b}\frac {\gamma(a,b)} {W^{(q)}(b-a)} &= \int_a^b \frac \partial {\partial b} \frac {W^{(q)}(b-y)} {W^{(q)}(b-a)} \tilde{f}''(y) \diff y  + \frac {W^{(q)}(0)} {W^{(q)}(b-a)} \tilde{f}''(b-) \\  &+  \sum_{a < y < b} \frac \partial {\partial b} \frac {W^{(q)}(b-y)} {{W^{(q)}(b-a)}} [\tilde{f}'(y+) - \tilde{f}'(y-)]. 
\end{align*}
Here, for any $a < y < b$, the (right) derivative of the fraction ${W^{(q)}(b-y)} / {W^{(q)}(b-a)}$ equals
\begin{align}
\frac {\partial} {\partial b}\frac {W^{(q)}((b-y)+)} {W^{(q)}((b-a)+)}
= \frac {W^{(q)}(b-y)} {W^{(q)}(b-a)} \left[ \frac {W^{(q)'}((b-y)+)} {W^{(q)}(b-y)} -  \frac {W^{(q)'}((b-a)+)} {W^{(q)}(b-a)} \right], \label{fraction_derivative}
\end{align}
which is positive  by Remark  \ref{remark_smoothness_zero}(3).  This together with the convexity of $\tilde{f}$ shows that $b \mapsto {\gamma(a,b)} / {W^{(q)}(b-a)}$ is nondecreasing on $(a, \infty)$.  
By \eqref{Gamma_initial} and Assumption \ref{assump_f_g}(2), we have  $\gamma(a,a+) =  \tilde{f}'(a+) W^{(q)}(0) \leq 0$.  
This means, by the positivity of $W^{(q)}(b-a)$, that ${\gamma(a,\cdot)}$
is first negative and then positive (or uniformly negative).    This completes the proof.
\end{proof}

\begin{lemma} Under Assumption \ref{assump_dec_inc},  the  function $v_{a^*,b^*}$ is convex on $\R$.
\end{lemma}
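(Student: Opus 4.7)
The plan is to reduce convexity of $v_{a^*,b^*}$ to showing its first derivative is nondecreasing on $\R$, then use Assumption \ref{assump_dec_inc} together with the defining property of $b^*$ to control the sign of the second derivative on $(a^*,b^*)$. Since $v_{a^*,b^*}$ is linear outside $[a^*,b^*]$ (with slope $-C_U$ on $(-\infty,a^*]$ by \eqref{value_function_simplified} and slope $C_D$ on $[b^*,\infty)$), the only work is on the interior interval plus smooth-fit checks at the endpoints.

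First I would handle the smooth fit. By \eqref{smooth_fit_all_b} combined with Case 1 of Lemma \ref{lemma_existence} ($\Gamma(a^*,b^*)=0$), one gets $v_{a^*,b^*}'(a^*+)=-C_U$ and $v_{a^*,b^*}'(b^*-)=C_D$ in both variation regimes, which together with Lemma \ref{lemma_smoothfit} shows $v_{a^*,b^*}\in C^1(\R)$ and that the derivative matches the linear slopes on both sides. Next, \eqref{derivative_value_function} gives $v_{a^*,b^*}'(x)=-\Gamma(a^*,x)+C_D$ on $[a^*,b^*]$, so monotonicity of $v_{a^*,b^*}'$ on this interval is equivalent to $\Gamma(a^*,\cdot)$ being nonincreasing on $(a^*,b^*)$, i.e.\ to $\gamma(a^*,x)\le 0$ there (recall \eqref{small_gamma}).

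The key step is to show that $b^*=\tilde{b}(a^*)$, where $\tilde b(a^*)$ is the sign-change point furnished by Assumption \ref{assump_dec_inc}. Under that assumption, $b\mapsto\Gamma(a^*,b)$ is nonincreasing on $(a^*,\tilde b(a^*))$ and strictly increasing on $(\tilde b(a^*),\infty)$. Since $b^*$ is by construction the largest minimizer of $\Gamma(a^*,\cdot)$ and lies in $(\overline a,\infty)$, strict monotonicity to the right of $\tilde b(a^*)$ rules out $b^*>\tilde b(a^*)$, while $\Gamma(a^*,\cdot)$ being nonincreasing on the left rules out $b^*<\tilde b(a^*)$ as the largest minimizer. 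Hence $b^*=\tilde b(a^*)$ and Assumption \ref{assump_dec_inc} yields $\gamma(a^*,x)\le 0$ for $x\in(a^*,b^*)$, so $\Gamma(a^*,\cdot)$ is nonincreasing on that interval and $v_{a^*,b^*}'$ is nondecreasing there.

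Finally I would piece everything together: $v_{a^*,b^*}'$ equals the constant $-C_U$ on $(-\infty,a^*]$, is nondecreasing on $[a^*,b^*]$ with $v_{a^*,b^*}'(a^*+)=-C_U$ and $v_{a^*,b^*}'(b^*-)=C_D$, and equals the constant $C_D$ on $[b^*,\infty)$. Continuity at the two endpoints (already established) together with \eqref{assump_C_sum} (so $-C_U\le C_D$, ensuring the left-to-right transition across the interval is overall nondecreasing) shows that $v_{a^*,b^*}'$ is nondecreasing on all of $\R$, which is equivalent to convexity for the $C^1$ function $v_{a^*,b^*}$. The main obstacle I anticipate is the identification $b^*=\tilde b(a^*)$ at points where $\gamma(a^*,\cdot)$ may be discontinuous (e.g.\ when $f'$ has a jump or in the bounded-variation case where $W^{(q)}(0)\ne 0$); one must argue carefully, using the left/right behaviour built into Assumption \ref{assump_dec_inc} rather than continuity, that no minimizer can sit strictly beyond $\tilde b(a^*)$.
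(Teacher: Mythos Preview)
Your proposal is correct and follows essentially the same approach as the paper: identify $b^*=\tilde{b}(a^*)$ via Assumption~\ref{assump_dec_inc}, use \eqref{derivative_value_function} to deduce convexity on $(a^*,b^*)$ from the monotonicity of $\Gamma(a^*,\cdot)$, and then extend to $\R$ using the smooth fit (Lemma~\ref{lemma_smoothfit}) and linearity outside $[a^*,b^*]$. Your concern about discontinuities of $\gamma(a^*,\cdot)$ is not an obstacle here, since $\Gamma(a^*,\cdot)$ itself is continuous and the characterization of $b^*$ as the largest minimizer settles the identification directly.
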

\begin{proof} Because $a^* < \overline{a}$, Assumption \ref{assump_dec_inc} guarantees that $\tilde{b}(a^*) = b^*$.
Because $b \mapsto \Gamma(a^*, b)$ is nonincreasing on $(a^*,b^*)$, $v_{a^*,b^*}$ is convex on $(a^*,b^*)$ in view of \eqref{derivative_value_function}.  The convexity can be extended to $\R$ by the differentiability at $a^*$ and $b^*$ of $v_{a^*,b^*}$ (if $b^* < \infty$) by Lemma \ref{lemma_smoothfit} and the linearity on $(-\infty, a^*]$ and $[b^*, \infty)$.
\end{proof}

This lemma directly implies the following.

\begin{proposition} \label{proposition_under_the_condition_prop1} Under Assumption \ref{assump_dec_inc}, the condition (1) of Theorem \ref{verification_lemma} holds.
\end{proposition}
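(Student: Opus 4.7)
The plan is to read off the derivative of $v_{a^*,b^*}$ on the left-linear piece and then propagate the bound rightward by convexity, which has already been established in the preceding lemma.

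First I would unpack $v_{a^*,b^*}$ on $(-\infty, a^*)$: by \eqref{value_function_simplified}, together with \eqref{z_below_zero} and the vanishing of $\varphi_{a^*}(x;\tilde{f})$ for $x \leq a^*$, the function reduces to an affine expression of the form $-C_U x + (\text{constant})$ on that half-line. Thus $v_{a^*,b^*}'(x) \equiv -C_U$ for all $x < a^*$, and in particular the left derivative at $a^*$ equals $-C_U$.

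Next I would invoke the convexity of $v_{a^*,b^*}$ on $\R$ proved in the preceding lemma. Convexity forces the (left and right) derivatives to be nondecreasing, so for every $x \in (a^*, b^*)$,
\begin{equation*}
v_{a^*,b^*}'(x) \;\geq\; v_{a^*,b^*}'(a^*+) \;\geq\; v_{a^*,b^*}'(a^*-) \;=\; -C_U,
\end{equation*}
which is exactly condition (1) of Theorem \ref{verification_lemma}. (The middle inequality is automatic from convexity; alternatively, the differentiability of $v_{a^*,b^*}$ at $a^*$ guaranteed by Lemma \ref{lemma_smoothfit} makes it an equality, but this refinement is not needed here.)

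There is no serious obstacle: all the work has been done upstream. The convexity lemma supplies monotonicity of the derivative, and the explicit formula \eqref{value_function_simplified} supplies the initial value $-C_U$ to be propagated. The only minor point to be careful about is that in the bounded-variation case one should phrase the monotonicity argument in terms of one-sided derivatives at $a^*$; but as noted, Lemma \ref{lemma_smoothfit} together with \eqref{smoothness_condition1} ensures $v_{a^*,b^*}$ is actually $C^1$ across $a^*$, so the argument goes through cleanly in both the bounded- and unbounded-variation regimes.
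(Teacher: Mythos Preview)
Your proof is correct and follows exactly the route the paper intends: the paper simply states that the proposition is a direct consequence of the preceding convexity lemma, and you have spelled out that implication (derivative equals $-C_U$ on $(-\infty,a^*)$, then propagate via monotonicity of the derivative from convexity). Nothing needs to be added or changed.
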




Fix any $b \in \R$.  Note that $\Gamma(a,b) > 0$ for any $a \in [\overline{a} \wedge b,b]$ in view of \eqref{about_gamma} and Assumption \ref{assump_f_g}(2). This together with \eqref{derivative_Gamma_a} and Assumption \ref{assump_f_g}(2) (which implies $\lim_{a \downarrow -\infty}\Gamma(a,b) = -\infty$) shows that there exists a unique $a(b) \in (-\infty, \overline{a} \wedge b)$ such that 
\begin{align*}
\Gamma(a(b), b) = 0.
\end{align*}

%
%

\begin{lemma} \label{lemma_monotonicity_b_a} Suppose Assumption \ref{assump_dec_inc} holds.
(i) If $b > b' > b^*$, then $a(b) < a (b') < a^*$ and (ii)  if $b > b^*$, $\gamma(a(b),x-) \geq 0$ for all $x \geq b$.
\end{lemma}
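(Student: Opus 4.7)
The plan is to prove (ii) with the strict inequality $\gamma(a(b), b) > 0$, from which (i) will follow immediately via the monotonicity of $\Gamma(\cdot, b)$ in its first argument. The key input throughout is Assumption \ref{assump_dec_inc}: for every $a < \overline{a}$, the function $b' \mapsto \Gamma(a, b')$ is first nonincreasing on $(a, \tilde{b}(a))$ and then nondecreasing on $(\tilde{b}(a), \infty)$.

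I first observe that $a(b) < a^*$ whenever $b > b^*$. In \textbf{Case 1}, the identity $\Gamma(a^*, b^*) = 0 = \underline{\Gamma}(a^*)$ pins down $b^*$ as the minimizer $\tilde{b}(a^*)$, so Assumption \ref{assump_dec_inc} gives $\gamma(a^*, \cdot) > 0$, and hence $\Gamma(a^*, b) > 0$, on $(b^*, \infty)$. Coupling this with the strict monotonicity of $a \mapsto \Gamma(a, b)$ on $(-\infty, \overline{a})$ (which follows from $\frac{\partial}{\partial a}\Gamma(a+, b) = -\tilde{f}'(a+) W^{(q)}(b-a) > 0$ on this range), the unique zero $a(b)$ must lie strictly below $a^*$.

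For part (ii), I would show $b > \tilde{b}(a(b))$ strictly. Using $a(b) < a^*$ and the same monotonicity of $\Gamma(\cdot, b^*)$ in its first argument, $\Gamma(a(b), b^*) < \Gamma(a^*, b^*) = 0 = \Gamma(a(b), b)$. If $b \leq \tilde{b}(a(b))$, then both $b^* < b$ and $b$ lie in the nonincreasing portion of $b' \mapsto \Gamma(a(b), b')$, forcing $\Gamma(a(b), b^*) \geq \Gamma(a(b), b)$, which contradicts the previous display. The boundary case $b = \tilde{b}(a(b))$ is further ruled out because it would give $\underline{\Gamma}(a(b)) = \Gamma(a(b), b) = 0$, contradicting the uniqueness of $a^*$ as the zero of $\underline{\Gamma}$ on $(\underline{a}, \overline{a})$ established in Lemma \ref{lemma_existence}. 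Hence $b > \tilde{b}(a(b))$, and Assumption \ref{assump_dec_inc} delivers $\gamma(a(b), x-) \geq 0$ for all $x \geq b$, proving (ii).

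Part (i) is then immediate: applying the strict version of (ii) to $b'$, we have $b' > \tilde{b}(a(b'))$ and hence $\gamma(a(b'), \cdot) > 0$ on $[b', \infty)$, so $\Gamma(a(b'), \cdot)$ is strictly increasing there. This gives $\Gamma(a(b'), b) > \Gamma(a(b'), b') = 0 = \Gamma(a(b), b)$, and strict monotonicity of $\Gamma(\cdot, b)$ on $(-\infty, \overline{a})$, together with $a(b), a(b') < a^* < \overline{a}$, yields $a(b) < a(b') < a^*$. The main obstacle is the upgrade from $\geq$ to $>$ in the signed bound on $\gamma(a(b), b)$: Assumption \ref{assump_dec_inc} by itself only distinguishes the two monotonicity regimes, and the strictness indispensable for part (i) is extracted solely from the uniqueness of $a^*$ in Lemma \ref{lemma_existence}.
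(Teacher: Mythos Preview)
Your proof is correct and follows essentially the same approach as the paper: first establish $a(b) < a^*$ via the strict monotonicity of $a \mapsto \Gamma(a,b)$ on $(-\infty,\overline{a})$ together with $\Gamma(a^*,b)>0$ for $b>b^*$; then use the key comparison $\Gamma(a(b),b^*) < 0 = \Gamma(a(b),b)$ to place $b$ past the minimizer $\tilde{b}(a(b))$; finally deduce (i) by applying (ii) at $b'$ and invoking the same monotonicity in the first argument. The paper phrases the first two steps by contradiction but the content is identical.

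One minor remark: your separate treatment of the boundary case $b=\tilde{b}(a(b))$ via the uniqueness of $a^*$ from Lemma~\ref{lemma_existence} is unnecessary. Since $\Gamma(a(b),\cdot)$ is nonincreasing on $(a(b),\tilde{b}(a(b))]$ and $a(b)<a^*<b^*<b$, the hypothesis $b\le\tilde{b}(a(b))$ (equality included) already forces $\Gamma(a(b),b^*)\ge\Gamma(a(b),b)=0$, contradicting $\Gamma(a(b),b^*)<0$. Thus the strict inequality $b>\tilde{b}(a(b))$ that you need for part (i) follows from the same contradiction, and no appeal to the uniqueness of $a^*$ is required.
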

\begin{proof}
We first suppose $b > b^*$ and prove that $a(b) < a^*$. Assume for contradiction that $b > b^*$ and $a(b) \geq a^*$ hold simultaneously.  By    $ a^* \leq  a(b) < \overline{a}$ and \eqref{derivative_Gamma_a}, we have $0 = \Gamma(a(b), b) \geq  \Gamma(a^*, b)$, which is a contradiction because $\Gamma(a^*, b^*) = 0$ and $\Gamma(a^*, \cdot)$ is increasing on $(b^*, b)$ (by Assumption \ref{assump_dec_inc} and because $b^* = \tilde{b}(a^*)$).  Hence whenever $b > b^*$ we must have $a^* > a(b)$.

This also shows $\gamma(a(b),b-) \geq 0$, and hence (ii) by Assumption \ref{assump_dec_inc}.  Indeed, if $\gamma(a(b),b-) < 0$, this means by Assumption \ref{assump_dec_inc} that $\gamma(a(b),\cdot) < 0$ on $(b^*, b)$ and hence $0 = \Gamma(a(b),b) < \Gamma(a(b), b^*)$.  However, this contradicts with $0 = \Gamma(a^*, b^*)$, which is larger than $\Gamma(a(b),b^*)$ by $a(b) < a^* < \overline{a}$ and \eqref{derivative_Gamma_a}.

Now suppose $b > b' > b^*$ and assume for contradiction that $a(b) \geq a(b')$ to complete the proof for (i).  By  \eqref{derivative_Gamma_a} we have $0 = \Gamma(a(b), b) \geq \Gamma(a(b'), b)$, which is a contradiction because $\Gamma(a(b'), \cdot)$ is increasing on $(b', b)$  (due to (ii)) and $\Gamma(a(b'),b') = 0$.
\end{proof}

\begin{lemma} \label{lemma_gamma_b_a_shape} Suppose Assumption \ref{assump_dec_inc} holds. For any $b > b^*$, we have $\Gamma(a(b), y) \leq 0$ for $b^* \leq y \leq b$.
\end{lemma}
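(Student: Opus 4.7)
The plan is to reduce the claim to a two-point estimate by exploiting the ``U-shape'' of $\Gamma(a(b),\cdot)$ guaranteed by Assumption \ref{assump_dec_inc}. First I would record the two sign facts at the endpoints of $[b^*,b]$. By definition of $a(b)$, we have $\Gamma(a(b),b)=0$, which handles the right endpoint. For the left endpoint, Lemma \ref{lemma_monotonicity_b_a}(i) gives $a(b)<a^*$, and because $a(b)<a^*<\overline{a}$, the derivative formula \eqref{derivative_Gamma_a} yields
\[
\frac{\partial}{\partial a}\Gamma(a+,b^*)=-\tilde f'(a+)\,W^{(q)}(b^*-a)>0
\qquad\text{for all } a\in(a(b),a^*),
\]
where positivity comes from Assumption \ref{assump_f_g}(2) (so $\tilde f'(a+)<0$ on $(-\infty,\overline{a})$) and the positivity of $W^{(q)}$ on $(0,\infty)$. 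Integrating this inequality in $a$ from $a(b)$ to $a^*$ gives $\Gamma(a(b),b^*)<\Gamma(a^*,b^*)=0$.

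Next I would use the shape of $\Gamma(a(b),\cdot)$. Since $a(b)<\overline{a}$, Assumption \ref{assump_dec_inc} applied at $a=a(b)$ says that $y\mapsto\gamma(a(b),y)$ is nonpositive on $(a(b),\tilde b(a(b)))$ and nonnegative on $(\tilde b(a(b)),\infty)$. Recalling from \eqref{small_gamma} that $\gamma(a,b)=\partial_b\Gamma(a,b-)$, this means the map $y\mapsto\Gamma(a(b),y)$ is nonincreasing, then nondecreasing (or monotonically nonincreasing if $\tilde b(a(b))=\infty$) on $[a(b),\infty)$. In particular, on the subinterval $[b^*,b]\subset[a(b),\infty)$ the maximum of this function is attained at one of the endpoints:
\[
\sup_{y\in[b^*,b]}\Gamma(a(b),y)=\max\bigl\{\Gamma(a(b),b^*),\,\Gamma(a(b),b)\bigr\}.
\]

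Combining the two endpoint estimates from the first step, both arguments of the maximum are $\leq 0$, so $\Gamma(a(b),y)\leq 0$ for every $y\in[b^*,b]$, which is the desired conclusion. I do not expect any real obstacle here: the only point that requires some care is checking that $b^*$ lies in the region where Assumption \ref{assump_dec_inc} is applicable, i.e.\ $b^*\geq a(b)$, which follows immediately from $a(b)<a^*<\overline{a}<b^*$ established in \textbf{Case 1} of Lemma \ref{lemma_existence} together with Lemma \ref{lemma_monotonicity_b_a}(i).
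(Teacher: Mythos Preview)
Your proof is correct and follows essentially the same route as the paper: establish $\Gamma(a(b),b)=0$ by definition, show $\Gamma(a(b),b^*)<\Gamma(a^*,b^*)=0$ via the monotonicity of $a\mapsto\Gamma(a,b^*)$ on $(-\infty,\overline{a})$ from \eqref{derivative_Gamma_a} and Lemma \ref{lemma_monotonicity_b_a}(i), and then use Assumption \ref{assump_dec_inc} to conclude that the maximum of $\Gamma(a(b),\cdot)$ on $[b^*,b]$ is attained at an endpoint. The only cosmetic difference is that you spell out the integration of \eqref{derivative_Gamma_a} and the inclusion $[b^*,b]\subset[a(b),\infty)$ more explicitly than the paper does.
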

\begin{proof}
By definition, $\Gamma(a(b),b)=0$.   Because $a \mapsto \Gamma(a,b^*)$ is increasing on $(-\infty, \overline{a})$ by \eqref{derivative_Gamma_a} and $a(b) < a^* < \overline{a}$ by Lemma \ref{lemma_monotonicity_b_a}(i), we have $\Gamma(a(b), b^*) < \Gamma(a^*, b^*)=0$.

Because $y \mapsto \Gamma(a(b), y)$ is nonincreasing and then increasing on $(b^*, b)$ (or simply monotone on $(b^*, b)$), we must have that
$\Gamma(a(b), y) \leq \max \{ \Gamma(a(b), b^*), \Gamma(a(b), b) \} = 0$ for $b^* \leq y \leq b$.
%
\end{proof}

Using Lemmas \ref{lemma_monotonicity_b_a} and \ref{lemma_gamma_b_a_shape}, we show the second condition of Theorem \ref{verification_lemma}.  Below, we use techniques similar to \cite{Hernandez_Yamazaki_2013, Loeffen_2008}.

\begin{proposition}\label{proof_subharmonic_spec_pos}
Under Assumption \ref{assump_dec_inc}, the condition (2) of Theorem \ref{verification_lemma} holds.
\end{proposition}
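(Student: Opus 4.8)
The goal is to show $(\mathcal{L}-q)v_{a^*,b^*}(x) + f(x) \geq 0$ for all $x > b^*$. The natural strategy is to exploit the fact that above $b^*$ the candidate value function is linear, $v_{a^*,b^*}(x) = v_{a^*,b^*}(b^*) + C_D(x-b^*)$, so its generator can be computed directly, but the jumps of $X$ see the nonlinear part of $v_{a^*,b^*}$ below $b^*$, which is exactly why the condition is nontrivial. The plan is to introduce, for each $b > b^*$, the auxiliary function $v_{a(b),b}$ associated with the pair $(a(b),b)$ where $a(b)$ is the unique level with $\Gamma(a(b),b)=0$ (defined just before Lemma \ref{lemma_monotonicity_b_a}). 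Since $\Gamma(a(b),b)=0$, equation \eqref{smoothness_condition1} holds for the pair $(a(b),b)$, so by Lemma \ref{lemma_smoothfit} the function $v_{a(b),b}$ is $C^1(\R)$ (resp. $C^2(\R)$ in the unbounded variation case), and by Lemma \ref{verification1}(1) applied to this pair, $(\mathcal{L}-q)v_{a(b),b}(x) + f(x) = 0$ for $a(b) < x < b$. In particular, evaluating at $x = b-$ and using smoothness to pass to $x = b$, we get $(\mathcal{L}-q)v_{a(b),b}(b) + f(b) = 0$.

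The key step is then to compare $v_{a^*,b^*}$ with $v_{a(b),b}$ and show that their difference is handled by the sign information in Lemmas \ref{lemma_monotonicity_b_a} and \ref{lemma_gamma_b_a_shape}. Concretely, I would fix $x > b^*$, set $b = x$, and write
\begin{align*}
(\mathcal{L}-q)v_{a^*,b^*}(x) + f(x) = (\mathcal{L}-q)\big(v_{a^*,b^*} - v_{a(x),x}\big)(x) + \big[(\mathcal{L}-q)v_{a(x),x}(x) + f(x)\big],
\end{align*}
where the bracketed term vanishes by the previous paragraph. So it remains to show $(\mathcal{L}-q)(v_{a^*,b^*} - v_{a(x),x})(x) \geq 0$. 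Using the explicit forms in \eqref{expression_v_a_b} and \eqref{v_derivative_general}, and the fact that both functions are linear above their respective upper barriers, the difference $g := v_{a^*,b^*} - v_{a(x),x}$ should be expressible, on $(-\infty, x]$, in a form controlled by $\Gamma(a^*,\cdot)$, $\Gamma(a(x),\cdot)$ and the difference of the $Z^{(q)}$ and $R^{(q)}$ terms; the point is that $(\mathcal{L}-q)Z^{(q)}(\cdot - a)$ and $(\mathcal{L}-q)R^{(q)}(\cdot-a)$ vanish on the relevant range (as in the proof of Lemma \ref{verification1}), so $(\mathcal{L}-q)g(x)$ reduces to boundary/jump contributions. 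The monotonicity $a(x) < a^* < \overline{a}$ from Lemma \ref{lemma_monotonicity_b_a}(i) and the sign $\Gamma(a(x),y) \leq 0$ for $b^* \leq y \leq x$ from Lemma \ref{lemma_gamma_b_a_shape}, together with $\Gamma(a^*,y)\geq 0$ on $[a^*,b^*]$, should pin down the sign of $g$ and of the integrand $g(x+z)$ for $z < 0$, yielding $(\mathcal{L}-q)g(x) \geq 0$.

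The main obstacle I anticipate is the bookkeeping in controlling $(\mathcal{L}-q)g(x)$: one must carefully track which pieces of $v_{a^*,b^*}$ and $v_{a(x),x}$ agree, where the kink/linear regimes of each sit relative to $x+z$ as $z$ ranges over $(-\infty,0)$, and ensure that the convexity of $v_{a^*,b^*}$ (from the lemma preceding Proposition \ref{proposition_under_the_condition_prop1}) and the corresponding structural properties of $v_{a(x),x}$ are invoked correctly — in particular that $g$ has a definite sign on $(-\infty, x]$ after the correct normalization, so that the nonlocal term $\int_{(-\infty,0)}[g(x+z) - g(x) - g'(x)z\mathbf{1}_{\{-1<z<0\}}]\nu(\diff z)$ combines with the local terms $c\,g'(x) + \tfrac12\sigma^2 g''(x) - q\,g(x)$ to the right sign. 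I would organize the argument so that $g$ vanishes to high order at $x$ (since both functions satisfy the same smooth-fit identities there by construction) and is monotone/convex below $x$, reducing everything to evaluating a single inequality at the boundary points $a^*$ and $b^*$. Once the sign of $g$ is established, the generator computation is routine.
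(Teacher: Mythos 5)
Your strategy matches the paper's: you introduce $v_{a(x),x}$ with $b=x$ and $a(x)$ defined by $\Gamma(a(x),x)=0$, note that $(\mathcal{L}-q)v_{a(x),x}(\cdot)+f(\cdot)=0$ on $(a(x),x)$, and reduce the claim to showing $(\mathcal{L}-q)(v_{a^*,b^*}-v_{a(x),x})(x-)\geq 0$, invoking Lemmas \ref{lemma_monotonicity_b_a} and \ref{lemma_gamma_b_a_shape}. So the comparison idea and the relevant lemmas are correctly identified.

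However, the proposal contains a misstatement and a genuine gap. The misstatement: you claim $g:=v_{a^*,b^*}-v_{a(x),x}$ ``vanishes to high order at $x$.'' Only $g'(x)=0$ holds (since $v_{a^*,b^*}'(x)=C_D$ above $b^*$ and $v_{a(x),x}'(x)=C_D$ by smooth fit at $x$); in general $g(x)\neq 0$ and $g''(x-)=\gamma(a(x),x-)\neq 0$. In fact the argument must \emph{prove} $g(x)\leq 0$ so that the $-q g(x)$ term in the generator is nonnegative, and separately invoke Lemma \ref{lemma_monotonicity_b_a}(ii) to control $-\tfrac12\sigma^2 v_{a(x),x}''(x-)=\tfrac12\sigma^2\gamma(a(x),x-)\geq 0$. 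The gap: the sign facts that actually drive the proof are never established, only asserted to ``pin down the sign of $g$.'' What is needed is (a) $v_{a(x),x}'(y)\geq v_{a^*,b^*}'(y)$ for all $y<x$, which requires a four-region case analysis over $y\in(-\infty,a(x))$, $[a(x),a^*)$, $[a^*,b^*)$, $[b^*,x)$ using \eqref{derivative_Gamma_a}, \eqref{Gamma_initial}, Lemma \ref{lemma_gamma_b_a_shape} and Assumption \ref{assump_dec_inc}; this makes $g$ nonincreasing on $(-\infty,x]$ and hence the jump integral $\int_{(-\infty,0)}[g(x+z)-g(x)]\nu(\diff z)\geq 0$; and (b) $v_{a(x),x}(x)\geq v_{a^*,b^*}(x)$, obtained by comparing values at $a(x)$ via \eqref{value_function_simplified} and Assumption \ref{assump_f_g}(2) and then integrating (a). Without these two steps, which are the substance of the proof, the proposal is an outline rather than a proof.
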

\begin{proof}
Fix any $x > b^*$.  
It is sufficient to prove
\begin{align}
(\mathcal{L}-q) (v_{a^*,b^*}-v_{a(x),x})(x-) := \lim_{y \uparrow x}(\mathcal{L}-q) (v_{a^*,b^*}-v_{a(x),x})(y) \geq 0. \label{generator_positive_hypothesis}
\end{align}
Indeed if both \eqref{generator_positive_hypothesis} and $(\mathcal{L}-q) v_{a^*,b^*}(x) + f(x)< 0$ hold simultaneously, then
\begin{align*}
0 > (\mathcal{L}-q) v_{a^*,b^*}(x) + f(x) \geq (\mathcal{L}-q) v_{a(x),x}(x-) + f(x),
\end{align*}
which leads to a contradiction because $(\mathcal{L}-q) v_{a(x),x}(y) + f(y) = 0$ for $a(x) < y < x$ that holds similarly to Lemma \ref{verification1}(1).  Notice that the function $v_{a(x),x}$ admits the same form as \eqref{expression_value_function} (with $a^*$ replaced with $a(x)$) because $\Gamma(a(x), x) = 0$.

Notice from \eqref{smooth_fit_all_b} that both $v_{a^*,b^*}$ and $v_{a(x),x}$ are differentiable on $\R$. 
Similarly to \eqref{derivative_value_function},  \label{generator_parts}
\begin{align}
v_{a(x),x}'(y)
=  -\Gamma(a(x),y)+C_D \quad \textrm{and} \quad
v_{a(x), x}''(y)  =  -\gamma(a(x),y), \quad a(x) < y < x. \label{derivatives_x_b_x}
\end{align}
The dominated convergence theorem gives
\begin{align*}
&(\mathcal{L}-q) (v_{a^*,b^*}-v_{a(x),x})(x-) = c (v_{a^*,b^*}' - v_{a(x),x}')(x) + \frac 1 2 \sigma^2 (v_{a^*,b^*}'' - v_{a(x),x}'')(x-) \\ &+ \int_{(-\infty, 0)} \left[(v_{a^*,b^*}-v_{a(x),x})(x+z) - (v_{a^*,b^*}-v_{a(x),x})(x) -  (v_{a^*,b^*}'- v_{a(x),x}')(x) z 1_{\{-1 < z < 0\}} \right] \nu(\diff z) \\ &- q(v_{a^*,b^*}-v_{a(x),x})(x).
\end{align*}
By the differentiability of $v_{a(x),x}$ and $v_{a^*,b^*}'(x) = C_D$ and $v_{a^*,b^*}''(x) = 0$ as $x > b^*$, this is simplified to
\begin{multline} \label{generator_parts}
(\mathcal{L}-q) (v_{a^*,b^*}-v_{a(x),x})(x-) = - \frac 1 2 \sigma^2  v_{a(x),x}''(x-) \\ + \int_{(-\infty, 0)} \left[(v_{a^*,b^*}-v_{a(x),x})(x+z) - (v_{a^*,b^*}-v_{a(x),x})(x)  \right] \nu(\diff z) - q(v_{a^*,b^*}-v_{a(x),x})(x).
\end{multline}
By taking limits in \eqref{derivatives_x_b_x} and by Lemma \ref{lemma_monotonicity_b_a}(ii),
\begin{align*}
v_{a(x), x}''(x-)  =  -\gamma(a(x),x-) \leq 0.
\end{align*}

In order to prove the positivity of the integral part of \eqref{generator_parts}, we shall prove that
\begin{align}
v_{a(x),x}'(y) \geq  v_{a^*,b^*}'(y), \quad y \in (-\infty,x). \label{derivative_dominant}
\end{align}
 Recall also that $a(x) < a^*$ by Lemma \ref{lemma_monotonicity_b_a}(i). 

(i) For $b^* \leq y < x $, by Lemma \ref{lemma_gamma_b_a_shape}, $v_{a(x),x}'(y)
= -\Gamma(a(x),y)+C_D  \geq C_D = v_{a^*,b^*}'(y)$.

(ii) For $a^* \leq y < b^*$,
\begin{align*}
\begin{split}
v_{a(x),x}'(y)
&= -\Gamma(a(x),y)+C_D \geq  -\Gamma(a^*,y)+C_D  = v_{a^*,b^*}'(y).
\end{split}
\end{align*}
Here the inequality holds because  $a(x) <  a^* < \overline{a}$ and $\Gamma(\cdot, y)$ is increasing by \eqref{derivative_Gamma_a}.

(iii) For $a(x) \leq y < a^*$, by Assumption \ref{assump_dec_inc}, \eqref{Gamma_initial}  and \eqref{derivative_Gamma_a}, $\Gamma(a(x),y) \leq \Gamma(a(x),a(x)) \vee \Gamma(a(x), a^*)  \leq \Gamma(a(x),a(x)) \vee \Gamma(a^*,a^*) =  C_D + C_U$. Hence
\begin{align*}
\begin{split}
v_{a(x),x}'(y)
= -\Gamma(a(x), y)+C_D  \geq -C_U = v_{a^*,b^*}'(y).
\end{split}
\end{align*}

(vi) For $y < a(x)$, we have that $v_{a(x),x}'(y) = v_{a^*,b^*}'(y)=-C_U$.  Hence \eqref{derivative_dominant} holds, and consequently the integral of \eqref{generator_parts} is positive.

Finally, we shall show that 
\begin{align}
v_{a(x),x}(x) \geq  v_{a^*,b^*}(x). \label{inequality_at_x}
\end{align}
By \eqref{value_function_simplified} (which also holds when $a^*$ is replaced with $a(x)$) and $a(x) < a^*$,
\begin{align*} 
v_{a(x),x} (a(x)) 
&= -C_U \frac {\psi'(0+)} q + \frac {\tilde{f}(a(x))} q  - C_U a(x), \\
v_{a^*,b^*} (a(x)) &= -C_U \frac {\psi'(0+)} q + \frac {\tilde{f}(a^*)} q - C_U a(x).
\end{align*}
Because $a(x) < a^* < \overline{a}$, we have $v_{a(x),x} (a(x))  \geq v_{a^*,b^*} (a(x))$ by Assumption \ref{assump_f_g}(2).  This together with \eqref{derivative_dominant} shows \eqref{inequality_at_x}.
Putting altogether, \eqref{generator_positive_hypothesis} indeed holds.  This completes the proof.
\end{proof}

Combining Propositions \ref{proposition_under_the_condition_prop1} and \ref{proof_subharmonic_spec_pos}, we have the following.

\begin{theorem} \label{theorem_for_convex_case} Under Assumption \ref{assump_dec_inc}, we have $v_{a^*,b^*}(x) = \inf_{\pi \in \Pi}v^\pi (x)$ for $x \in \R,$
and $\pi_{a^*,b^*}$ is the optimal strategy.
\end{theorem}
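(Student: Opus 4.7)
The plan is essentially to collect the pieces already assembled and feed them into the verification machinery of Theorem \ref{verification_lemma}. Since \textbf{Case 2} was already disposed of in the preceding section, I would restrict attention to \textbf{Case 1}, so that $(a^*,b^*)$ satisfies $\underline{a} < a^* < \overline{a} < b^* < \infty$, $\Gamma(a^*, b^*) = 0$, and (via Remark \ref{remark_continuity_gamma} under the continuity of $\gamma(a^*,\cdot)$ at $b^*$) also $\gamma(a^*,b^*) = 0$; note that Assumption \ref{assump_dec_inc} forces $\tilde{b}(a^*) = b^*$ automatically.

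First I would verify that the candidate value function $v_{a^*,b^*}$ defined by \eqref{expression_value_function} has the regularity required to apply the verification lemma: by Lemma \ref{lemma_smoothfit}, together with the smoothness properties of $W^{(q)}$ and $Z^{(q)}$ recorded in Remark \ref{remark_smoothness_zero}, the function $v_{a^*,b^*}$ is $C^1(\mathbb{R})$ in the bounded variation case and $C^2(\mathbb{R})$ in the unbounded variation case, and the integral part of $\mathcal{L} v_{a^*,b^*}$ is finite by Assumption \ref{assump_finiteness_mu} and the linearity of $v_{a^*,b^*}$ below $a^*$ and above $b^*$. So $v_{a^*,b^*}$ is an admissible candidate.

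Next I would check the two standing hypotheses of Theorem \ref{verification_lemma}. Hypothesis (1), namely $v_{a^*,b^*}'(x) \geq -C_U$ on $(a^*,b^*)$, is already established by Proposition \ref{proposition_under_the_condition_prop1}, whose proof relies on the convexity of $v_{a^*,b^*}$, which in turn follows from Assumption \ref{assump_dec_inc} via the sign analysis of $b \mapsto \Gamma(a^*,b)$ through \eqref{derivative_value_function}. Hypothesis (2), namely $(\mathcal{L}-q)v_{a^*,b^*}(x) + f(x) \geq 0$ for $x > b^*$, is exactly the content of Proposition \ref{proof_subharmonic_spec_pos}. Since both hypotheses are therefore in force under Assumption \ref{assump_dec_inc}, the Verification Lemma applies directly and yields
\[
v_{a^*,b^*}(x) = \inf_{\pi \in \Pi} v^\pi(x), \qquad x \in \mathbb{R},
\]
with the doubly reflected strategy $\pi_{a^*,b^*}$ attaining the infimum.

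There is essentially no real obstacle left in this final step: all the analytic difficulties (the delicate shape analysis of $\Gamma(a,\cdot)$ and $\gamma(a,\cdot)$, the use of Lemmas \ref{lemma_monotonicity_b_a} and \ref{lemma_gamma_b_a_shape}, and the comparison $v_{a(x),x}' \geq v_{a^*,b^*}'$) have been absorbed into the two propositions cited above. The only thing to keep an eye on is to confirm in writing that we are in \textbf{Case 1} (so that $b^* < \infty$ and the smooth-fit relations at $b^*$ that drive Proposition \ref{proof_subharmonic_spec_pos} are genuinely available), since otherwise the conclusion is given by the Case 2 theorem rather than by the verification argument.
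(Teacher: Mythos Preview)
Your proposal is correct and matches the paper's approach exactly: the paper's proof is a one-line appeal to Propositions \ref{proposition_under_the_condition_prop1} and \ref{proof_subharmonic_spec_pos} to verify conditions (1) and (2) of Theorem \ref{verification_lemma}, which then yields the conclusion. Your additional remarks on regularity and the restriction to \textbf{Case 1} are already embedded in the standing hypotheses of Section \ref{section_sufficient_condition} and the discussion preceding the verification lemma, so they are consistent with the paper's argument.
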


\section{Examples} \label{section_examples}

Recall from Theorems \ref{theorem_convex}  and  \ref{theorem_for_convex_case} that  whenever the running cost function $f$ is convex, the optimality of $v_{a^*, b^*}$ holds.  In this section we consider the following two special cases and study the criteria for \textbf{Case 1} and \textbf{Case 2} as in Lemma \ref{lemma_existence}. 
\subsection{Quadratic case} \label{example_quadratic}
Suppose the running cost function is $f \equiv f_Q$ where
\begin{align}
f_Q(x) := \alpha^- x^2 1_{\{x < 0\}} + \alpha^+ x^2 1_{\{x \geq 0\}}, \quad x \in \R, \label{def_f_Q}
\end{align}
 for some $\alpha^-, \alpha^+ > 0$. The quadratic cost function of this form is used in, e.g., \cite{baccarin2002optimal}.
 
We then have $f_Q'(x) =   2 [\alpha^- 1_{\{ x < 0\}} + \alpha^+ 1_{\{ x \geq 0\}}]  x$ and
$\tilde{f}_Q'(x) =  2 [\alpha^- 1_{\{ x < 0\}} + \alpha^+ 1_{\{ x \geq 0\}}]  x+ q C_U$.
Hence, Assumption \ref{assump_f_g}(2) holds with $\overline{a} = - q C_U / (2 \alpha^-)$ if $C_U \geq 0$ and  $\overline{a} = - q C_U / (2 \alpha^+)$ if $C_U < 0$.  Moreover, for $a \leq 0$,
\begin{align*}
\Psi(a; \tilde{f}') = \left\{ \begin{array}{ll}\frac {2 \alpha^- a + q C_U } {\Phi(q)} + \frac {2 (\alpha^+ - \alpha^-) e^{\Phi(q) a} + 2 \alpha^-} {\Phi(q)^2},  & a \leq 0, \\
\frac {2 \alpha^+ a + q C_U } {\Phi(q)} + \frac {2 \alpha^+} {\Phi(q)^2},
 & a > 0, \end{array} \right.
\end{align*}
and hence $\underline{a}$ is either $\underline{a} \leq 0$ satisfying
\begin{align} \label{a_bar_equation_quadratic}
  {2 \alpha^- \underline{a} + q C_U}  = - \frac 2 {\Phi(q)}  \Big( {(\alpha^+ - \alpha^-) e^{\Phi(q) \underline{a}}}   + {\alpha^-}  \Big),
\end{align}
or $\underline{a} > 0$ satisfying
\begin{align}2 \alpha^+ \underline{a} + q C_U = - \frac 2 {\Phi(q)} \alpha^+. \label{a_bar_equation_quadratic_positive}
\end{align}

In addition, direct computation gives for $a \leq 0$
\begin{align*}
\Gamma(a,b) = C_U + C_D + (2 \alpha^- a + q C_U) \overline{W}^{(q)} (b-a)  + 2 \alpha^- \int_{a}^{0 \wedge b} \overline{W}^{(q)} (b-y) \diff y  + 2 \alpha^+ \int_{0 \wedge b}^{b} \overline{W}^{(q)} (b-y) \diff y, 
\end{align*}
and for $a > 0$,
\begin{align*}
\Gamma(a,b) = C_U + C_D + (2 \alpha^+ a + q C_U) \overline{W}^{(q)} (b-a)+ 2 \alpha^+ \int_{a}^{b} \overline{W}^{(q)} (b-y) \diff y. 
\end{align*}

We first show the following.
\begin{lemma} \label{lemma_W_difference}For any $a \leq 0$, we have  $\lim_{b \uparrow \infty} [  \overline{W}^{(q)}(b) -  \overline{W}^{(q)}(b-a) e^{\Phi(q) a} ]  =  (e^{\Phi(q) a} -1)/q$.
\end{lemma}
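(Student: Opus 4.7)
The plan is to rewrite the expression inside the limit via the representation
\[
\overline{W}^{(q)}(x) = \frac{W^{(q)}(x)}{\Phi(q)} - \frac{1}{q} + \frac{1}{q}\E_x\bigl[e^{-q\tau_0^-}\bigr], \qquad x \in \R,
\]
which I obtain by combining $Z^{(q)}(x) = 1 + q\overline{W}^{(q)}(x)$ with the identity $\E_x[e^{-q\tau_0^-}] = Z^{(q)}(x) - (q/\Phi(q))W^{(q)}(x)$ derived from \eqref{two_sided_exit}. Evaluating at $x=b$ and $x=b-a$ (the latter weighted by $e^{\Phi(q)a}$) and subtracting produces
\[
\overline{W}^{(q)}(b) - e^{\Phi(q)a}\overline{W}^{(q)}(b-a) = \frac{W^{(q)}(b) - e^{\Phi(q)a}W^{(q)}(b-a)}{\Phi(q)} + \frac{e^{\Phi(q)a} - 1}{q} + \frac{\E_b[e^{-q\tau_0^-}] - e^{\Phi(q)a}\E_{b-a}[e^{-q\tau_0^-}]}{q}.
\]
The middle term is exactly the claimed limit, so the goal reduces to showing that the first and third summands vanish as $b \uparrow \infty$.

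The third summand will be the easy part: $\p_x(\tau_0^- > t) = \p(\inf_{s \le t}X_s > -x) \uparrow 1$ for every $t$ as $x \uparrow \infty$, so $\tau_0^- \to \infty$ in $\p_x$-probability, and dominated convergence with the bound $e^{-q\tau_0^-} \le 1$ gives $\E_x[e^{-q\tau_0^-}] \to 0$, applied at both $x = b$ and $x = b - a$.

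The first summand will be the main obstacle. The monotone convergence $W_{\Phi(q)}(x) \nearrow \psi'(\Phi(q))^{-1}$ from \eqref{W_q_limit} only delivers $W^{(q)}(x) = e^{\Phi(q)x}/\psi'(\Phi(q)) + o(e^{\Phi(q)x})$, which is too weak since I need $e^{\Phi(q)b}[W_{\Phi(q)}(b) - W_{\Phi(q)}(b-a)] \to 0$. To sharpen this, I plan to apply optional sampling at $\tau_0^- \wedge T$ to the exponential martingale $e^{-qt + \Phi(q)X_t}$ and let $T \uparrow \infty$; the contribution from $\{\tau_0^- > T\}$ will be handled via the change of measure $\diff\p^{\Phi(q)}/\diff\p|_{\F_T} = e^{-qT + \Phi(q)(X_T - x)}$ together with the standard identity $\p_x^{\Phi(q)}(\tau_0^- = \infty) = \psi'(\Phi(q)) W_{\Phi(q)}(x)$, which holds because $\psi_{\Phi(q)}'(0+) = \psi'(\Phi(q)) > 0$ means $X$ drifts to $+\infty$ under $\p^{\Phi(q)}$ (see \cite{Kyprianou_2006}). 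Together with $W^{(q)}(x) = e^{\Phi(q)x} W_{\Phi(q)}(x)$, this will yield
\[
\epsilon(x) := e^{\Phi(q)x} - \psi'(\Phi(q))W^{(q)}(x) = \E_x\!\left[e^{-q\tau_0^- + \Phi(q)X_{\tau_0^-}}\I{\tau_0^- < \infty}\right].
\]
Since $X$ has no positive jumps, $X_{\tau_0^-} \le 0$ on $\{\tau_0^- < \infty\}$, so the integrand is bounded by $1$ and vanishes in probability by the same argument as for the third summand; dominated convergence then gives $\epsilon(x) \to 0$ as $x \uparrow \infty$. Finally, using $e^{\Phi(q)b} = e^{\Phi(q)a} e^{\Phi(q)(b-a)}$ to pair up the exponential terms,
\[
\psi'(\Phi(q))\bigl[W^{(q)}(b) - e^{\Phi(q)a}W^{(q)}(b-a)\bigr] = e^{\Phi(q)a}\epsilon(b-a) - \epsilon(b) \longrightarrow 0,
\]
which disposes of the first summand and leaves the claimed limit $(e^{\Phi(q)a}-1)/q$.
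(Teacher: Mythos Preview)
Your proof is correct, but follows a different route from the paper's. The paper starts from the \emph{two-sided} exit identity $\E_b[e^{-q\tau_0^-}1_{\{\tau_{b-a}^+>\tau_0^-\}}] = Z^{(q)}(b) - Z^{(q)}(b-a)W^{(q)}(b)/W^{(q)}(b-a)$, expands $Z^{(q)}$ in terms of $\overline{W}^{(q)}$, adds and subtracts $e^{\Phi(q)a}\overline{W}^{(q)}(b-a)$, and then handles the residual terms by citing the decomposition $W^{(q)}(y)=e^{\Phi(q)y}/\psi'(\Phi(q))-\hat u^{(q)}(y)$ (with $\hat u^{(q)}$ bounded and vanishing at infinity) from Kuznetsov et al.\ \cite{Kuznetsov2013}. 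You instead use the \emph{one-sided} exit identity to write $\overline{W}^{(q)}(x)=W^{(q)}(x)/\Phi(q)-1/q+\E_x[e^{-q\tau_0^-}]/q$, which immediately isolates the claimed limit as one of three additive pieces and avoids the ratio manipulations the paper needs. For the crucial asymptotic $W^{(q)}(b)-e^{\Phi(q)a}W^{(q)}(b-a)\to 0$, rather than citing an external result you derive the probabilistic representation $e^{\Phi(q)x}-\psi'(\Phi(q))W^{(q)}(x)=\E_x[e^{-q\tau_0^-+\Phi(q)X_{\tau_0^-}}1_{\{\tau_0^-<\infty\}}]$ via optional sampling of the Wald martingale and the change of measure to $\p^{\Phi(q)}$; this makes your argument fully self-contained. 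The paper's version is shorter because it offloads this step to a reference, while yours is more transparent and requires nothing beyond the fluctuation identities already recalled in Section~\ref{section_model}.
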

\begin{proof}
The case $a = 0$ holds trivially and hence we assume $a < 0$.
By \eqref{two_sided_exit},  $\E_{b} \big[ e^{-q \tau_0^-} 1_{\left\{ \tau_{b-a}^+ > \tau_0^- \right\}} \big] = 1 + q \overline{W}^{(q)}(b) -  (1+ q\overline{W}^{(q)}(b-a))  {W^{(q)}(b)}  / {W^{(q)}(b-a)}$.
 Because this converges to $0$ as $b \uparrow \infty$, we have the convergence:
 \begin{align*}
-\frac  1 q
&= \lim_{b \uparrow \infty} \Big[  \overline{W}^{(q)}(b) -  e^{\Phi(q) a} \overline{W}^{(q)}(b-a) - \frac 1 q \frac {W^{(q)}(b)}  {W^{(q)}(b-a)} +    \frac {\overline{W}^{(q)}(b-a)} {{W^{(q)}(b-a)} } \Big( e^{\Phi(q) a} {W^{(q)}(b-a)}  -   {W^{(q)}(b)}   \Big) \Big]. 
\end{align*}
By equation (95) of Kuznetsov et al.\ \cite{Kuznetsov2013}, we can decompose the scale function so that $W^{(q)}(y) = e^{\Phi(q) y}/\psi'(\Phi(q)) - \hat{u}^{(q)} (y)$, $y \geq 0$, where $\hat{u}^{(q)} (z)$ is uniformly bounded and vanishes as $z \uparrow \infty$.  Hence,  $e^{\Phi(q) a} W^{(q)}(b-a)  -  {W^{(q)}(b)}  =  - \hat{u}^{(q)} (b-a) e^{\Phi(q) a} + \hat{u}^{(q)}(b) \xrightarrow{b \uparrow \infty} 0$.  On the other hand, by \eqref{scale_function_version} and \eqref{W_q_limit}, $W^{(q)}(b)/W^{(q)}(b-a) \xrightarrow{b \uparrow \infty} e^{\Phi(q) a}$ and ${\overline{W}^{(q)}(b-a)} / {W^{(q)}(b-a)} \xrightarrow{b \uparrow \infty} \Phi(q)^{-1}$.  This shows the claim. 
\end{proof}

\begin{proposition} \label{proposition_f_Q}Suppose  $f \equiv f_Q$ as in \eqref{def_f_Q}.   Then,  \textbf{Case 1}  always holds.
\end{proposition}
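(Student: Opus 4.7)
Since $f_Q$ is convex on $\R$, Theorem \ref{theorem_convex} guarantees Assumption \ref{assump_dec_inc} holds, so Lemma \ref{lemma_existence} places us in either \textbf{Case 1} or \textbf{Case 2}. My plan is to rule out \textbf{Case 2}, which would require $\Gamma(\underline{a}, b) \geq 0$ for every $b \geq \underline{a}$; in fact I will prove the stronger statement that $\Gamma(\underline{a}, b) \to -\infty$ as $b \to \infty$.

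The starting point is the decomposition $W^{(q)}(y) = e^{\Phi(q) y}/\psi'(\Phi(q)) - \hat{u}^{(q)}(y)$ from \cite{Kuznetsov2013} that was already employed in the proof of Lemma \ref{lemma_W_difference}, together with the defining identity $\Psi(\underline{a}; \tilde{f}_Q') = 0$. Substituting these into $\Gamma(\underline{a},b) = C_U + C_D + \varphi_{\underline{a}}(b;\tilde{f}_Q')$ rewrites
\[
\Gamma(\underline{a}, b) = C_U + C_D - \frac{e^{\Phi(q) b}}{\psi'(\Phi(q))} \int_b^\infty e^{-\Phi(q) y}\, \tilde{f}_Q'(y)\, \diff y - \int_0^{b-\underline{a}} \hat{u}^{(q)}(u)\, \tilde{f}_Q'(b-u)\, \diff u.
\]
For large $b>0$, $\tilde{f}_Q'(y) = 2\alpha^+ y + qC_U$ on the relevant range, and a routine integration by parts gives the second term as $-(2\alpha^+ b + qC_U)/[\psi'(\Phi(q))\Phi(q)] - 2\alpha^+/[\psi'(\Phi(q))\Phi(q)^2]$, with $b$-linear coefficient $-2\alpha^+/[\psi'(\Phi(q))\Phi(q)]$.

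The third term requires more care. I would split it at $u = b$: on $(b, b-\underline{a})$ the integration range has bounded length $-\underline{a}$ with $\hat{u}^{(q)}$ vanishing as $b\to\infty$ and $\tilde{f}_Q'(b-u)$ bounded, so this piece is $o(1)$; on $(0, b)$ the expansion $\tilde{f}_Q'(b-u) = 2\alpha^+(b-u) + qC_U$ yields
\[
-(2\alpha^+ b + qC_U)\int_0^b \hat{u}^{(q)}(u)\,\diff u + 2\alpha^+ \int_0^b u\, \hat{u}^{(q)}(u)\,\diff u.
\]
Lemma \ref{lemma_W_difference}, integrated once in $y$, encodes exactly the moment $\int_0^\infty \hat{u}^{(q)}(u)\,\diff u = 1/q - 1/[\psi'(\Phi(q))\Phi(q)]$; the first moment $\int_0^\infty u\,\hat{u}^{(q)}(u)\,\diff u$ can be obtained analogously by a second-order expansion (or by differentiating the Laplace transform identity $\int_0^\infty e^{-su}\hat{u}^{(q)}(u)\,\diff u = [\psi'(\Phi(q))(s-\Phi(q))]^{-1} - [\psi(s)-q]^{-1}$), and is finite by Assumption \ref{assump_finiteness_mu}. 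Summing the two $b$-linear coefficients produces the key cancellation
\[
-\frac{2\alpha^+}{\psi'(\Phi(q))\Phi(q)} - 2\alpha^+\left(\frac{1}{q} - \frac{1}{\psi'(\Phi(q))\Phi(q)}\right) = -\frac{2\alpha^+}{q},
\]
so $\Gamma(\underline{a}, b) \sim -(2\alpha^+/q)\,b \to -\infty$, contradicting \textbf{Case 2} and forcing \textbf{Case 1}.

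The main technical obstacle is justifying the moment identities and the integrability of $\hat{u}^{(q)}$ with its first moment; Lemma \ref{lemma_W_difference} supplies the zeroth moment almost for free by matching $O(1)$ terms in the limit, while the first moment requires a slightly finer asymptotic expansion or the Laplace-transform identity above. The case $\underline{a} > 0$ (cf.\ \eqref{a_bar_equation_quadratic_positive}) is handled identically since only the behaviour of $\tilde{f}_Q'(y) = 2\alpha^+ y + qC_U$ for large positive $y$ drives the asymptotics. Intuitively, the negative sign of the leading coefficient $-2\alpha^+/q$ reflects the unboundedness of $f_Q$ at $+\infty$, which is precisely what precludes a single-barrier optimum.
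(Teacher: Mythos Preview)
Your argument is correct and reaches the same endpoint as the paper, namely $\Gamma(\underline{a},b)/b \to -2\alpha^+/q$, but the route is genuinely different. The paper rewrites $\Gamma(\underline{a},b)$ in terms of $\overline{W}^{(q)}$ using the explicit equations \eqref{a_bar_equation_quadratic}--\eqref{a_bar_equation_quadratic_positive} for $\underline{a}$, and then extracts the linear-in-$b$ behaviour from two probabilistic ingredients: the expectation formula \eqref{U_for_limit} for the single-reflected process $U^{\underline{a},\infty}$ (yielding $\overline{W}^{(q)}(b-\underline{a})-\Phi(q)\int_0^{b-\underline{a}}\overline{W}^{(q)}(y)\,\diff y\sim \Phi(q)(b-\underline{a})/q$), and Lemma \ref{lemma_W_difference} plus l'H\^opital for the remaining combination. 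Your approach instead inserts the Kuznetsov--Kyprianou--Rivero decomposition $W^{(q)}=e^{\Phi(q)\cdot}/\psi'(\Phi(q))-\hat{u}^{(q)}$ directly into $\varphi_{\underline{a}}(b;\tilde{f}_Q')$, uses $\Psi(\underline{a};\tilde{f}_Q')=0$ to kill the exponentially growing piece, and reduces the question to elementary moments of $\hat{u}^{(q)}$. Your method is more analytic and avoids the separate treatment of the reflected-process identity; the paper's method is more probabilistic and keeps everything at the level of $\overline{W}^{(q)}$ and two-sided exit formulas.

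One small comment: you do not actually need the first moment $\int_0^\infty u\,\hat{u}^{(q)}(u)\,\diff u$ to be finite, only that $\int_0^b u\,\hat{u}^{(q)}(u)\,\diff u=o(b)$, and this already follows from the finiteness of the zeroth moment by dominated convergence (the integrand $\tfrac{u}{b}1_{[0,b]}\hat{u}^{(q)}(u)$ is bounded by the integrable $\hat{u}^{(q)}$ and tends to zero pointwise). So the appeal to Assumption \ref{assump_finiteness_mu} for the first moment is unnecessary, which slightly streamlines your argument. Your derivation of the zeroth moment from Lemma \ref{lemma_W_difference} is correct, though it comes out more directly from the identity $Z^{(q)}(x)-\tfrac{q}{\Phi(q)}W^{(q)}(x)=\E_x[e^{-q\tau_0^-}]\to 0$ than from ``integrating once in $y$''.
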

\begin{proof}
It is sufficient to show that $\Gamma(\underline{a}, b) \xrightarrow{b \uparrow \infty} -\infty$. Indeed, by the continuity of $\Gamma(a,b)$ in $a$, this means that there must exist $a' > \underline{a}$ such that $b \mapsto \Gamma(a',b)$ downcrosses the x-axis, which means \textbf{Case 1}.

(i) We shall first consider the case $\underline{a} \leq 0$. For $b > 0$, by \eqref{a_bar_equation_quadratic}, \begin{align*}
\Gamma(\underline{a},b) 
&= C_U + C_D + (2 \alpha^- \underline{a} + q C_U) \overline{W}^{(q)} (b-\underline{a})  + 2 \alpha^- \int_0^{b-\underline{a}} \overline{W}^{(q)} (y) \diff y  + 2 (\alpha^+ - \alpha^-) \int_0^{b} \overline{W}^{(q)} (y) \diff y  \\
&= C_U + C_D + (2 \alpha^- \underline{a} + q C_U) \Big( \overline{W}^{(q)} (b-\underline{a})  - \Phi(q) \int_0^{b-\underline{a}} \overline{W}^{(q)} (y) \diff y \Big)  \\ &-  2 (\alpha^+ - \alpha^-)  \Big(  {e^{\Phi(q) \underline{a}}}  \int_0^{b-\underline{a}} \overline{W}^{(q)} (y) \diff y  -  \int_0^{b} \overline{W}^{(q)} (y) \diff y \Big).
\end{align*}

By Lemma 7.1 of  \cite{Yamazaki_2013},  for the process $U^{\underline{a}, \infty}$ as defined in \eqref{single_reflected},
we have 
\begin{align}
\E_x \Big[ \int_{[0,\infty)} e^{-qt} \diff U^{\underline{a}, \infty}_{t} \Big]  = - (x-\underline{a})  +  \frac q {\Phi(q)}\Big( {\overline{W}^{(q)} (x-\underline{a})}-  \Phi(q) \int_0^{x-\underline{a}} \overline{W}^{(q)}(y) \diff y \Big) + \frac 1 {\Phi(q)}  - \frac {\psi'(0+)} q. \label{U_for_limit}
\end{align}
For any $x > 0 (\geq \underline{a})$, because $0 \leq U_t^{\underline{a}, \infty} \leq -\underline{X}_{t} \vee 0$, we have $0 \leq \int_{[0,\infty)} e^{-qt} \diff U^{\underline{a}, \infty}_{t} \leq \int_0^\infty q e^{-qt} U^{\underline{a}, \infty}_{t} \diff t \leq  \int_0^\infty q e^{-qt} [(-\underline{X}_t) \vee 0] \diff t$.
Moreover, 
\begin{align*}
\limsup_{x \rightarrow \infty}\E_x \Big[ \int_0^\infty q e^{-qt} [(-\underline{X}_t) \vee 0] \diff t \Big]= \limsup_{x \rightarrow \infty} \E \Big[ \int_0^\infty q e^{-qt} [(-(\underline{X}_t + x)) \vee 0] \diff t \Big] = 0,
\end{align*}
where the last equality holds by dominated convergence by noting that, under $\p$, $0 \leq \int_0^\infty q e^{-qt} [(-(\underline{X}_t + x)) \vee 0 ] \diff t \leq \int_0^\infty q e^{-qt} (-(\underline{X}_t)) \diff t$, which is integrable by  \eqref{expectation_inf_process}.   Hence, \eqref{U_for_limit} vanishes as  $x \rightarrow \infty$.  Therefore, 
\begin{align} \label{W_overline_difference_diverge}
\overline{W}^{(q)} (b-\underline{a})  - \Phi(q) \int_0^{b-\underline{a}} \overline{W}^{(q)} (y) \diff y \sim \frac {\Phi(q)} q (b-\underline{a}),
\end{align} 
 where $x \sim y$ means $x/y \rightarrow 1$ as $b \rightarrow \infty$.
On the other hand, by l'H\^opital's rule and Lemma \ref{lemma_W_difference}, 
\begin{align*}
 {e^{\Phi(q) \underline{a}}}   \int_0^{b-\underline{a}} \overline{W}^{(q)} (y) \diff y  -  \int_0^{b} \overline{W}^{(q)} (y) \diff y  \sim  b \Big( {e^{\Phi(q) \underline{a}}}  \overline{W}^{(q)} (b-\underline{a})   -  \overline{W}^{(q)} (b)  \Big) \sim b  \frac {1-e^{\Phi(q) \underline{a}}} q.
\end{align*}
Combining these and by \eqref{a_bar_equation_quadratic},
\begin{align*}
\lim_{b \uparrow \infty} \frac {\Gamma(\underline{a}, b)} b = (2 \alpha^- \underline{a} + q C_U)  \frac {\Phi(q)} q  -  2 (\alpha^+ - \alpha^-) \frac {1-e^{\Phi(q) \underline{a}}} q = -\frac {2 \alpha^+} q < 0.
\end{align*}
 This completes the proof for the case $\underline{a} \leq 0$.
 
 (ii) For the case $\underline{a} > 0$, by \eqref{a_bar_equation_quadratic_positive},
 \begin{align*}
\Gamma(a,b) = C_U + C_D - \frac {2 \alpha^+}{\Phi(q)}  \Big( \overline{W}^{(q)} (b-a) - \Phi(q) \int_{a}^{b} \overline{W}^{(q)} (b-y) \diff y \Big),
\end{align*}
which goes to $-\infty$ by \eqref{W_overline_difference_diverge}. This completes the proof.

\end{proof}

\subsection{Linear case} \label{example_linear}
Suppose the running cost function is $f \equiv f_L$ where
\begin{align}
f_L(x) := \alpha^- |x| 1_{\{x \leq 0\}} + \alpha^+ x1_{\{x > 0\}}, \quad x \in \R,  \label{def_f_L}
\end{align}
 for some $\alpha^+, \alpha^-\in \R$.  This linear cost is specifically assumed in related control problems such as \cite{constantinides1978existence, sulem1986solvable}.
For any $x \neq 0$,  $f_L'(x) =  -\alpha^- 1_{\{x < 0\}} + \alpha^+ 1_{\{x > 0\}}$  and
$\tilde{f}_L'(x) = -\alpha^- 1_{\{x < 0\}} + \alpha^+ 1_{\{x > 0\}} + q C_U$.
Hence, in order for Assumption \ref{assump_f_g} to be satisfied, we need to require that
\begin{align}
q C_U + \alpha^+ > 0 > q C_U - \alpha^-. \label{linear_case_assump}
\end{align}
Under \eqref{linear_case_assump}, we have $\overline{a} = 0$, and $\underline{a} < 0$ is the unique value such that 
\begin{align}
 (\alpha^-+\alpha^+)e^{\Phi(q) \underline{a}}  = \alpha^- -q C_U, \label{a_underline_linear}
 \end{align}
 which always exists because \eqref{linear_case_assump} guarantees that $\alpha^-+\alpha^+ > 0$ and $\alpha^- -q C_U \in (0, \alpha^- + \alpha^+)$.
In addition, direct computation gives, for any $a \in [\underline{a}, \overline{a}]$ and $b > a$,
\begin{align*}
\Gamma(a,b) &= C_U + C_D + ( q C_U- \alpha^- ) \overline{W}^{(q)} (b-a)  + ( \alpha^- + \alpha^+)\overline{W}^{(q)} (b), \\
\gamma(a,b) 
&=(q C_U - \alpha^-) W^{(q)}(b-a) + (\alpha^- + \alpha^+ ) W^{(q)}(b).
\end{align*}

\begin{proposition}  \label{proposition_linear} Suppose  $f \equiv f_L$ as in \eqref{def_f_Q} such that \eqref{linear_case_assump} holds.   Then, \textbf{Case 1} holds if $C_D < \alpha^+/q$ and  \textbf{Case 2} holds otherwise.
\end{proposition}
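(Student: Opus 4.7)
The plan is to analyze $\Gamma(\underline{a}, \cdot)$ directly using the closed form given just above the statement together with the characterization \eqref{a_underline_linear} of $\underline{a}$, and then separate into the two sign cases for $C_D - \alpha^+/q$.

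First I would substitute the identity $qC_U - \alpha^- = -(\alpha^- + \alpha^+) e^{\Phi(q)\underline{a}}$ (from \eqref{a_underline_linear}) into the formula for $\gamma(\underline{a},b)$. This yields
\begin{equation*}
\gamma(\underline{a}, b) = (\alpha^- + \alpha^+)\bigl[ W^{(q)}(b) - e^{\Phi(q)\underline{a}} W^{(q)}(b-\underline{a})\bigr] = (\alpha^- + \alpha^+) e^{\Phi(q) b}\bigl[W_{\Phi(q)}(b) - W_{\Phi(q)}(b-\underline{a})\bigr].
\end{equation*}
Since $\underline{a} < 0$, $\alpha^- + \alpha^+ > 0$, and $W_{\Phi(q)}$ is nondecreasing (cf.\ the remark after \eqref{scale_function_version}), this quantity is nonpositive. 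Hence $b \mapsto \Gamma(\underline{a}, b)$ is nonincreasing on $[\underline{a},\infty)$.

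Next I would compute the limit $\lim_{b \to \infty} \Gamma(\underline{a}, b)$. Applying the same substitution to the $\Gamma$ formula gives
\begin{equation*}
\Gamma(\underline{a}, b) = C_U + C_D + (\alpha^- + \alpha^+)\bigl[\overline{W}^{(q)}(b) - e^{\Phi(q)\underline{a}}\overline{W}^{(q)}(b-\underline{a})\bigr],
\end{equation*}
and Lemma \ref{lemma_W_difference} identifies the bracketed limit as $(e^{\Phi(q)\underline{a}} - 1)/q$. Using \eqref{a_underline_linear} once more to simplify $(\alpha^- + \alpha^+)e^{\Phi(q)\underline{a}} = \alpha^- - qC_U$, the limit collapses to
\begin{equation*}
\lim_{b \to \infty}\Gamma(\underline{a}, b) = C_D - \alpha^+/q.
\end{equation*}

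Combining the two steps, $\Gamma(\underline{a}, b)$ decreases monotonically from $\Gamma(\underline{a}, \underline{a}) = C_U + C_D > 0$ down to $C_D - \alpha^+/q$. If $C_D \geq \alpha^+/q$, then $\Gamma(\underline{a}, b) \geq 0$ for every $b$; since $a \mapsto \Gamma(a,b)$ is increasing on $(-\infty, \overline{a})$ by \eqref{derivative_Gamma_a}, it follows that $\underline{\Gamma}(a) \geq 0$ throughout $(\underline{a}, \overline{a})$, placing us in \textbf{Case 2}. Conversely, if $C_D < \alpha^+/q$, the limit is strictly negative, so there is $b_0$ with $\Gamma(\underline{a}, b_0) < 0$; by continuity of $\Gamma(\cdot, b_0)$, the same inequality persists for $a$ slightly greater than $\underline{a}$, giving $\underline{\Gamma}(a) < 0$ and placing us in \textbf{Case 1}.

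The main obstacle is the limit computation, since the two terms in $\Gamma(\underline{a}, b)$ each blow up exponentially and their leading exponentials cancel exactly by the defining equation of $\underline{a}$; I would have to rely on the refined cancellation provided by Lemma \ref{lemma_W_difference} rather than the cruder asymptotic $W^{(q)}(y) \sim e^{\Phi(q) y}/\psi'(\Phi(q))$. Everything else is a direct substitution argument.
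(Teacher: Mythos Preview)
Your proposal is correct and follows essentially the same route as the paper's proof: both establish that $\gamma(\underline{a},\cdot)\le 0$ (so $\Gamma(\underline{a},\cdot)$ is nonincreasing), compute $\lim_{b\to\infty}\Gamma(\underline{a},b)=C_D-\alpha^+/q$ via Lemma~\ref{lemma_W_difference} and \eqref{a_underline_linear}, and then split on the sign of this limit. The only cosmetic difference is your first step: the paper shows $\gamma(\underline{a},b)/W^{(q)}(b-\underline{a})$ increases to $0$ using the monotonicity of $W^{(q)}(b)/W^{(q)}(b-\underline{a})$ from \eqref{fraction_derivative}, whereas you factor $\gamma(\underline{a},b)=(\alpha^-+\alpha^+)e^{\Phi(q)b}[W_{\Phi(q)}(b)-W_{\Phi(q)}(b-\underline{a})]$ and read off the sign directly from the monotonicity of $W_{\Phi(q)}$---a slightly cleaner but equivalent argument.
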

\begin{proof}
Because $\underline{a} < \overline{a} = 0$,
the fraction $ {W^{(q)}(b)} / {W^{(q)}(b-\underline{a})}$ is increasing in $b$ (see \eqref{fraction_derivative}). Hence, 
for any $\underline{a} < b$, by \eqref{a_underline_linear} (together with  \eqref{scale_function_version} and \eqref{W_q_limit}),
\begin{align*}
\frac {\gamma(\underline{a},b)} {W^{(q)}(b-\underline{a})} \nearrow (\alpha^-+\alpha^+) \lim_{b \rightarrow \infty} \frac {W^{(q)}(b)} {W^{(q)}(b-\underline{a})} + (q C_U - \alpha^-) = (\alpha^-+\alpha^+) e^{\Phi(q) \underline{a}} + (q C_U - \alpha^-)  = 0.
\end{align*}
Hence,  $\gamma(\underline{a},b) \leq 0$ uniformly in $b$, or equivalently the function $b \mapsto \Gamma(\underline{a}, b)$ is uniformly decreasing and $\inf_{b \geq \underline{a}}\Gamma(\underline{a}, b) = \lim_{b \rightarrow \infty} \Gamma(\underline{a}, b)$.  By this, \textbf{Case 1} holds if $\lim_{b \rightarrow \infty} \Gamma(\underline{a}, b) < 0$ while \textbf{Case 2} holds if $\lim_{b \rightarrow \infty} \Gamma(\underline{a}, b) \geq 0$ .
Now the proof is complete because, by Lemma \ref{lemma_W_difference} and \eqref{a_underline_linear},
\begin{align*}
\lim_{b \rightarrow \infty}\Gamma(\underline{a},b) 
&= C_U + C_D +  (\alpha^-+\alpha^+)  \frac {e^{\Phi(q) \underline{a}} -1} q \\
&= C_U + C_D   - (\alpha^-+\alpha^+)/q +  \alpha^-/ q -C_U = C_D - \alpha^+/q.
\end{align*}
\end{proof}



\section{Numerical Results} \label{section_numerics}

In this section, we conduct numerical experiments using the spectrally negative \lev process in
 the $\beta$-family introduced by \cite{Kuznetsov_2010_2}. The following definition is due to Definition 4 of \cite{Kuznetsov_2010_2}.
\begin{definition}
A spectrally negative \lev process is said to be in the $\beta$-family if \eqref{laplace_spectrally_positive} is written
\begin{align*}
\psi(z) = \hat{\delta} z + \frac 1 2 \sigma^2 z^2 + \frac \varpi \beta \left\{ B(\alpha + \frac z \beta, 1 - \lambda) - B(\alpha, 1 - \lambda) \right\}
\end{align*}
for some $\hat{\delta} \in \R$, $\alpha > 0$, $\beta > 0$, $\varpi \geq 0$, $\lambda \in (0,3) \backslash \{1,2\}$ and the beta function $B(x,y)$. 

\end{definition}
This is a subclass of the meromorphic \lev process \cite{Kuznetsov_2010} and hence the \lev measure can be written
\begin{align*}
\nu(\diff z) = \sum_{j=1}^\infty p_j \eta_j e^{- \eta_j
|z|} 1_{\{z < 0\}} \diff z,  \quad z
\in \mathbb{R},
\end{align*}
for some $\{ p_k, \eta_k; k \geq 1 \}$.
The equation $\psi(\cdot)=q$  has countably many roots $\{ -\xi_{k,q}; k \geq 1 \}$ that are all negative real numbers and satisfy the interlacing condition: $\cdots < -\eta_k < -\xi_{k,q} < \cdots < -\eta_2 < -\xi_{2,q} < -\eta_1 < -\xi_{1,q} < 0$.
By using similar arguments as in \cite{Egami_Yamazaki_2010_2}, the scale function can be written as 
\begin{align*} 
W^{(q)}(x)   = \frac {e^{\Phi(q) x}} {\psi'(\Phi(q))} - \sum_{i=1}^\infty B_{i,q}e^{-\xi_{i,q} x}, \quad x \geq 0,
\end{align*}
where
\begin{align*}
B_{i,q} := \frac {\Phi(q)} q \frac {\xi_{i,q}A_{i,q}} {\Phi(q)+\xi_{i,q}} \quad \textrm{and} \quad 
A_{i,q} :=  \left( 1 - \frac {\xi_{i,q}} {\eta_i} \right) \prod_{j \neq i} \frac {1 - \frac {\xi_{i,q}} {\eta_j}} {1 - \frac {\xi_{i,q}} {\xi_{j,q}}}, \quad i \geq 1.
\end{align*}

Throughout this section, we suppose $\hat{\delta} = 0.1$, $\lambda = 1.5$, $\alpha=3$, $\beta=1$, $\varpi = 0.1$ and $\sigma = 0.2$.  This means that the process is of unbounded variation with jumps of infinite activity.  We also let $q = 0.03$.

\subsection{Quadratic case}  
We shall first consider the quadratic case with $f \equiv f_Q$ as in \eqref{def_f_Q} with $\alpha^- = \alpha^+ = 1$.   As is discussed in Proposition \ref{proposition_f_Q}, \textbf{Case 1} necessarily holds and $a^*$ must be in $(\underline{a}, \overline{a})$.  Due to the fact that $\underline{\Gamma} (\cdot)$ is monotone and $\gamma(a, \cdot)$ satisfies Assumption \ref{assump_dec_inc} by Theorem \ref{theorem_convex}, the values of  $a^*$ and  $b^*$ can be easily computed by a (nested) bisection-type method.

Figure \ref{Gamma_plot_quadratic} shows the functions $b \mapsto \Gamma(a, b)$ and $b \mapsto \gamma(a, b)$ for $a = \underline{a}, (\underline{a} + a^*)/2, a^*, (a^* + \overline{a})/2, \overline{a}$ with the common values $C_U = C_D = 10$.  As has been studied in Subsection \ref{subsection_existence_a_b}, $\Gamma(\overline{a}, \cdot)$ starts at a positive value and increases monotonically.  On the other hand, as in the proof of Proposition \ref{proposition_f_Q}, $\Gamma(\underline{a}, \cdot)$ goes to $-\infty$ (and hence \textbf{Case 1} always holds).  The desired value of $a^*$ is such that the function is tangent (at $b^*$) to the x-axis.  It can be confirmed by the graph on the right that Assumption \ref{assump_dec_inc} indeed holds for each $a$.

 \begin{figure}[htbp]
\begin{center}
\begin{minipage}{1.0\textwidth}
\centering
\begin{tabular}{cc}
 \includegraphics[scale=0.58]{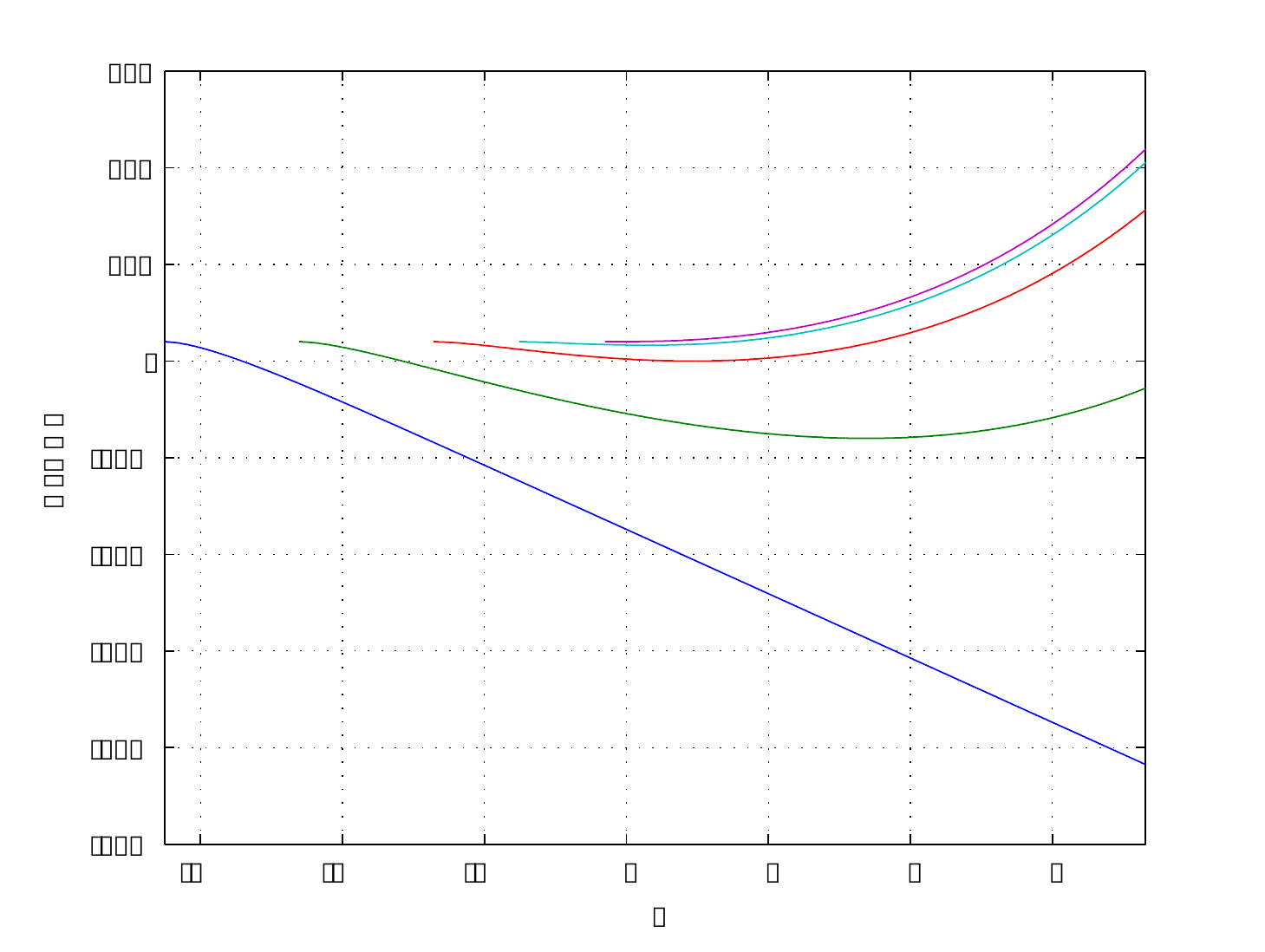} & \includegraphics[scale=0.58]{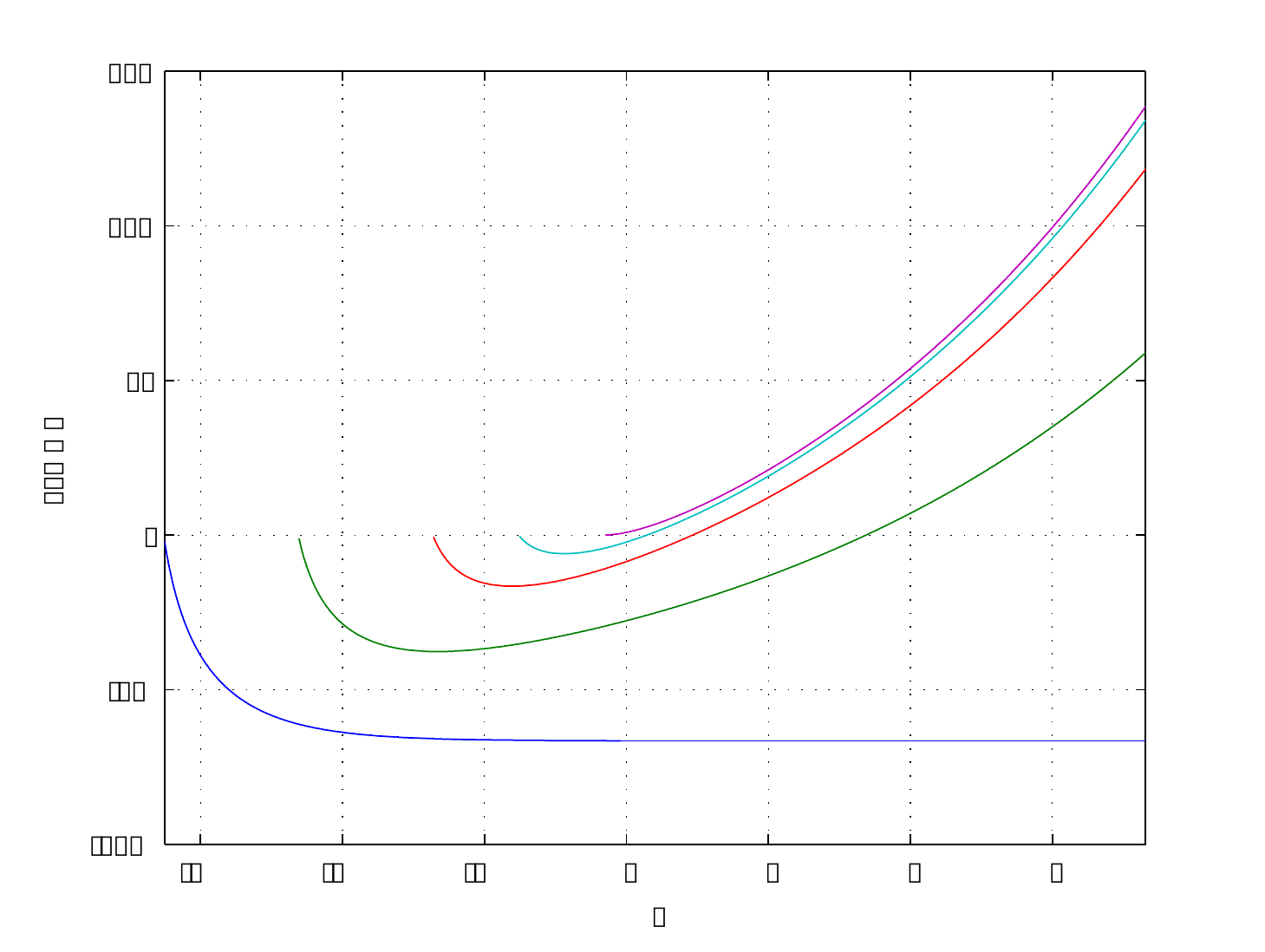}  \\
$\Gamma(a, \cdot)$ & $\gamma(a, \cdot)$
\end{tabular}
\end{minipage}
\caption{Quadratic case: the plots of $b \mapsto \Gamma(a, b)$ (left) and $b \mapsto \gamma(a,b)$ (right) for $a = \underline{a}, (\underline{a} + a^*)/2, a^*, (a^* + \overline{a})/2, \overline{a}$.  The line in red corresponds to the one for $a = a^*$; the point at which $\Gamma(a^*, \cdot)$ is tangent to the x-axis (or $\gamma(a^*, \cdot)$ vanishes) becomes $b^*$.}  \label{Gamma_plot_quadratic}
\end{center}
\end{figure}

In Figure \ref{value_function_plot_quadratic}, we show the value functions for  $C_U \in  E := \{35, 30, 25, 20, 15, 10, 5, 0, -5 \}$ with the common value of $C_D = 6$ (left) and also those for $C_D \in E$ with $C_U= 6$ (right).  With these selections of parameters, \eqref{assump_C_sum} is always satisfied. It is clear that the value function is monotone in both $C_U$ and $C_D$.   The distance between $a^*$ and $b^*$ tends to shrink as $C_U + C_D$ decreases.  In all cases, we can confirm that the optimal barrier levels $(a^*, b^*)$ are indeed finite.

 \begin{figure}[htbp]
\begin{center}
\begin{minipage}{1.0\textwidth}
\centering
\begin{tabular}{cc}
 \includegraphics[scale=0.58]{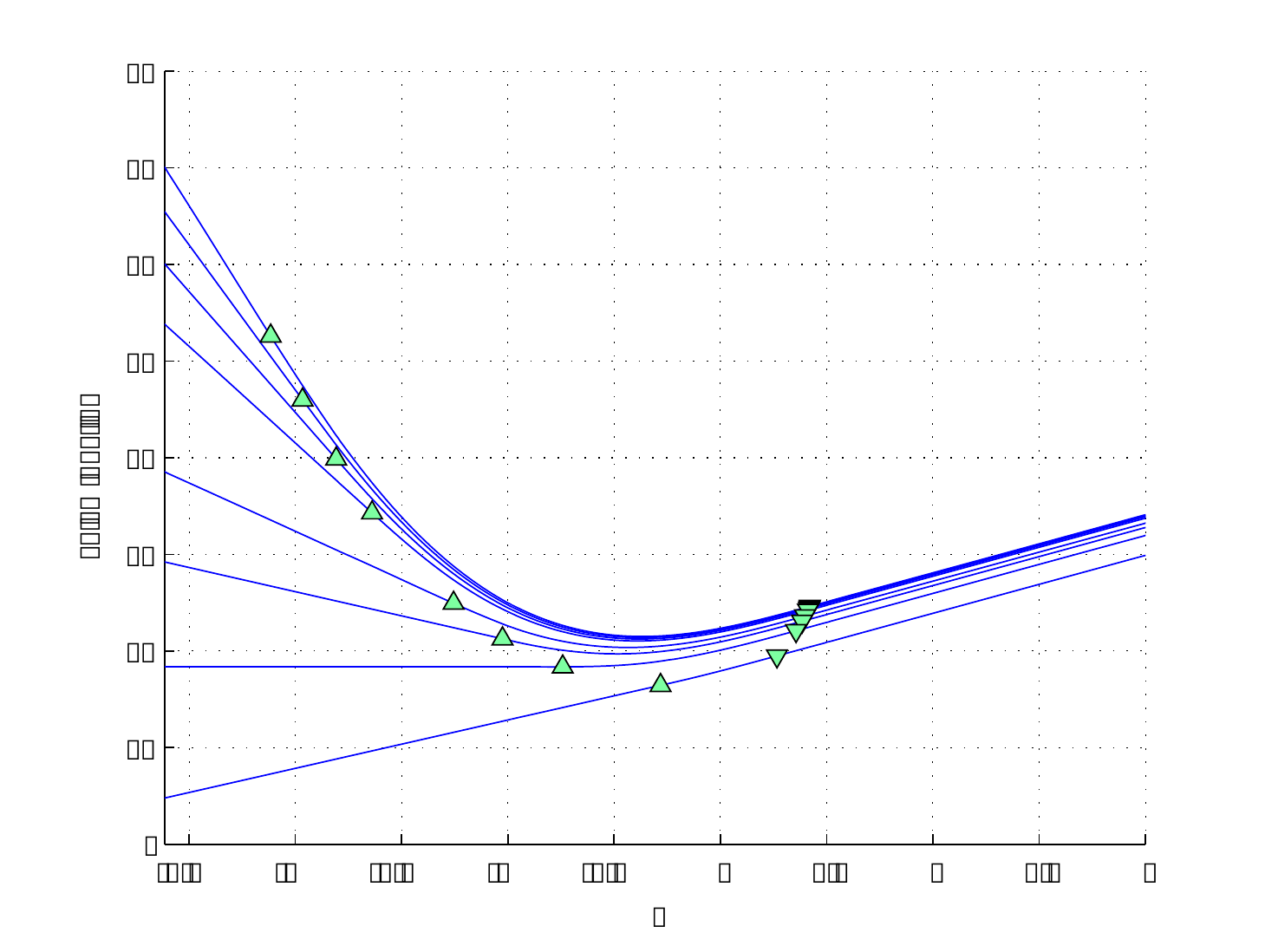} & \includegraphics[scale=0.58]{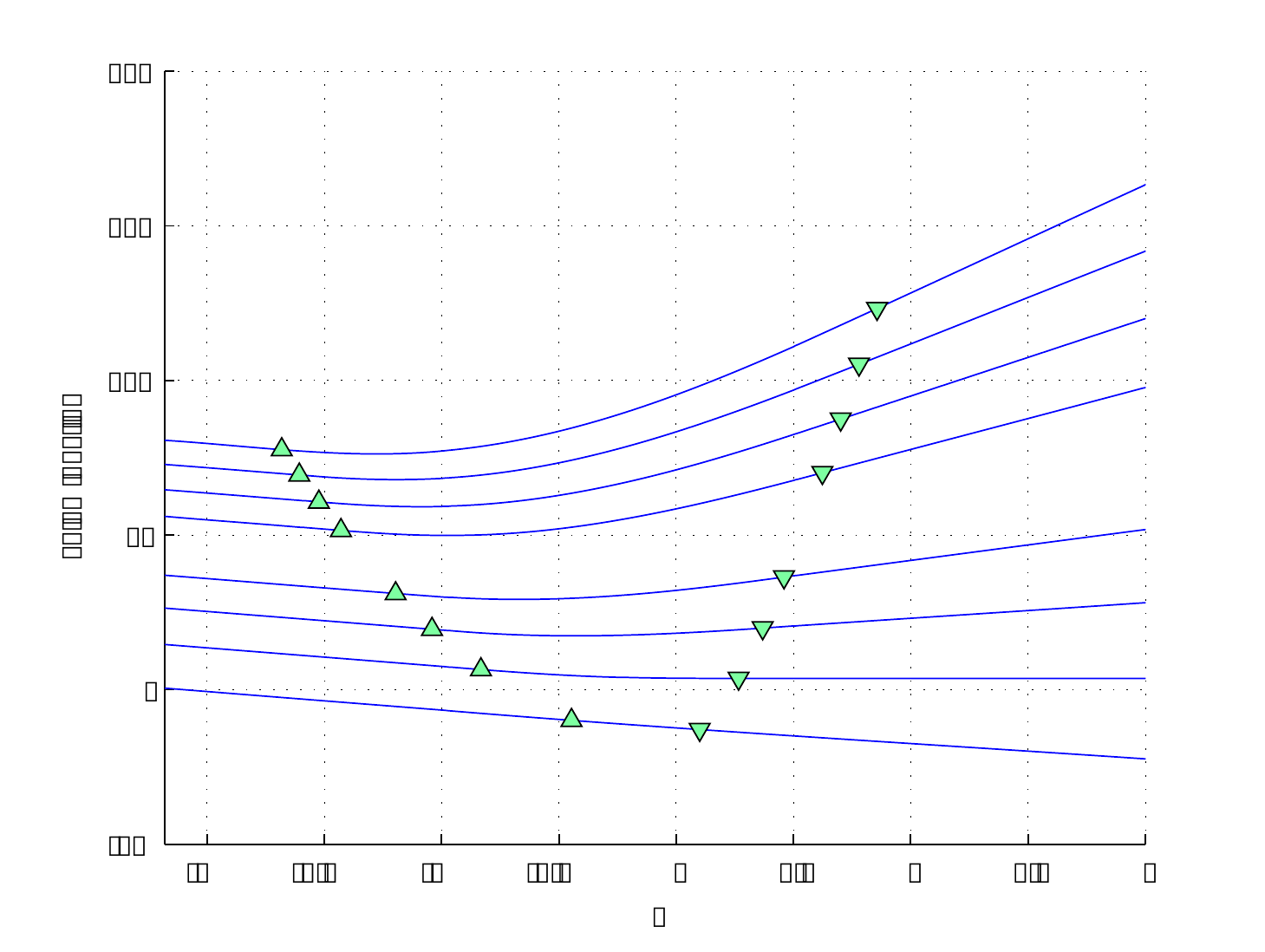}  \\
with respect to $C_U$ & with respect to $C_D$
\end{tabular}
\end{minipage}
\caption{Quadratic case: the plots of the value function for  $C_U \in  E$ with the common value of $C_D = 6$ (left) and also those for $C_D \in E$ with $C_U = 6$ (right).  The up-pointing and down-pointing triangles show the points at $a^*$ and $b^*$, respectively.}  \label{value_function_plot_quadratic}
\end{center}
\end{figure}

\subsection{Linear case}  
We shall then consider the linear case with $f \equiv f_L$ as in \eqref{def_f_L} with $\alpha^- = \alpha^+ = 1$.   Figure \ref{Gamma_plot_linear} shows the functions $b \mapsto \Gamma(a, b)$ and $b \mapsto \gamma(a, b)$ for $a = \underline{a}, (\underline{a} + a^*)/2, a^*, (a^* + \overline{a})/2, \overline{a}$ with the common values $C_U = C_D = 10$.  A noticeable difference with the quadratic case in Figure \ref{Gamma_plot_quadratic} is that the function $\gamma(a, \cdot)$ is not differentiable at $b = 0$. Moreover, $\Gamma(\underline{a}, \cdot)$ converges to a finite value.  Recall that the limit is positive if and only if $a^* = \underline{a}$ and $b^* = \infty$.   We can confirm that Assumption  \ref{assump_dec_inc} is indeed satisfied. 

Similarly to Figure \ref{value_function_plot_quadratic}, we show in Figure \ref{value_function_plot_linear} the value functions using the same parameter set for $C_U$ and $C_D$.  Here, we exclude the case $C_U = 35$ because it violates \eqref{linear_case_assump}.  Moreover, the case $C_D = 35$ is an example of \textbf{Case 2} because $C_D \geq \alpha^+ /q$ as in Proposition \ref{proposition_linear}.  We see that because the tail of the function $f$ grows/decreases slower than the quadratic case, the levels $(a^*, b^*)$ for this linear case are more sensitive to the values of $C_U$ and $C_D$. 


 \begin{figure}[htbp]
\begin{center}
\begin{minipage}{1.0\textwidth}
\centering
\begin{tabular}{cc}
 \includegraphics[scale=0.58]{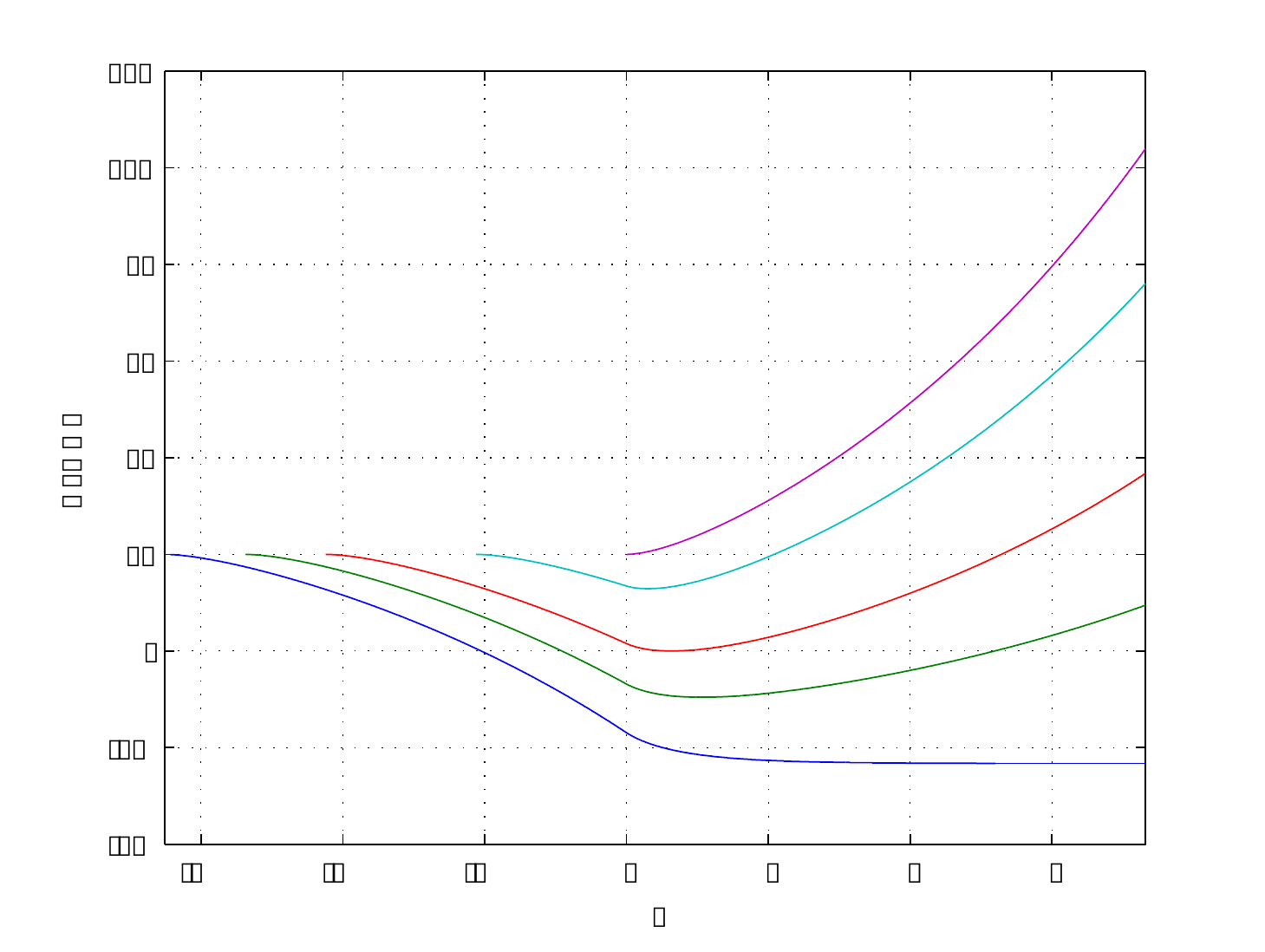} & \includegraphics[scale=0.58]{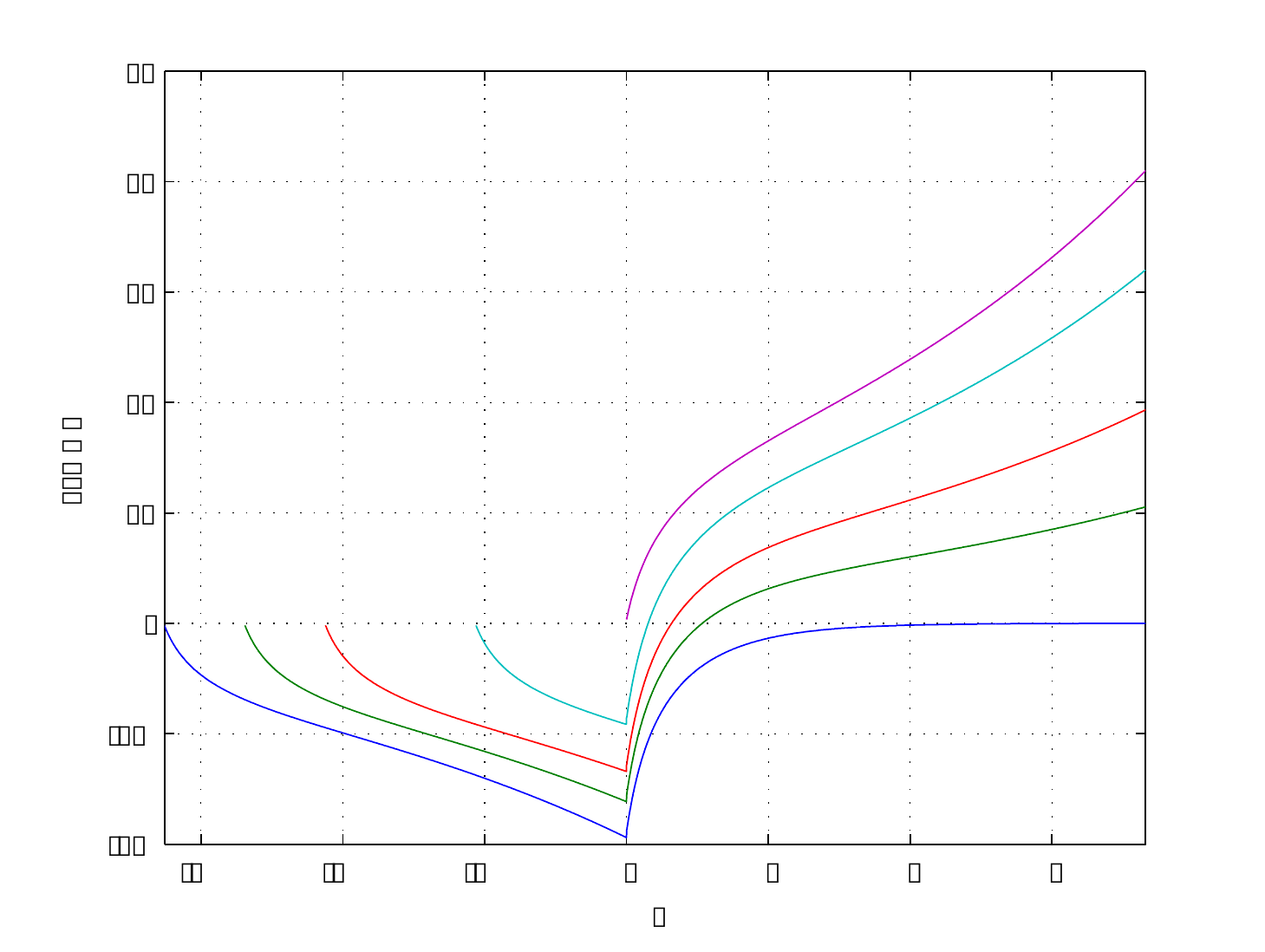}  \\
$\Gamma(a, \cdot)$ & $\gamma(a, \cdot)$  
\end{tabular}
\end{minipage}
\caption{Linear case: the plots of $b \mapsto \Gamma(a, b)$ (left) and $b \mapsto \gamma(a,b)$ (right) for $a = \underline{a}, (\underline{a} + a^*)/2, a^*, (a^* + \overline{a})/2, \overline{a}$.}  \label{Gamma_plot_linear}
\end{center}
\end{figure}

 \begin{figure}[htbp]
\begin{center}
\begin{minipage}{1.0\textwidth}
\centering
\begin{tabular}{cc}
 \includegraphics[scale=0.58]{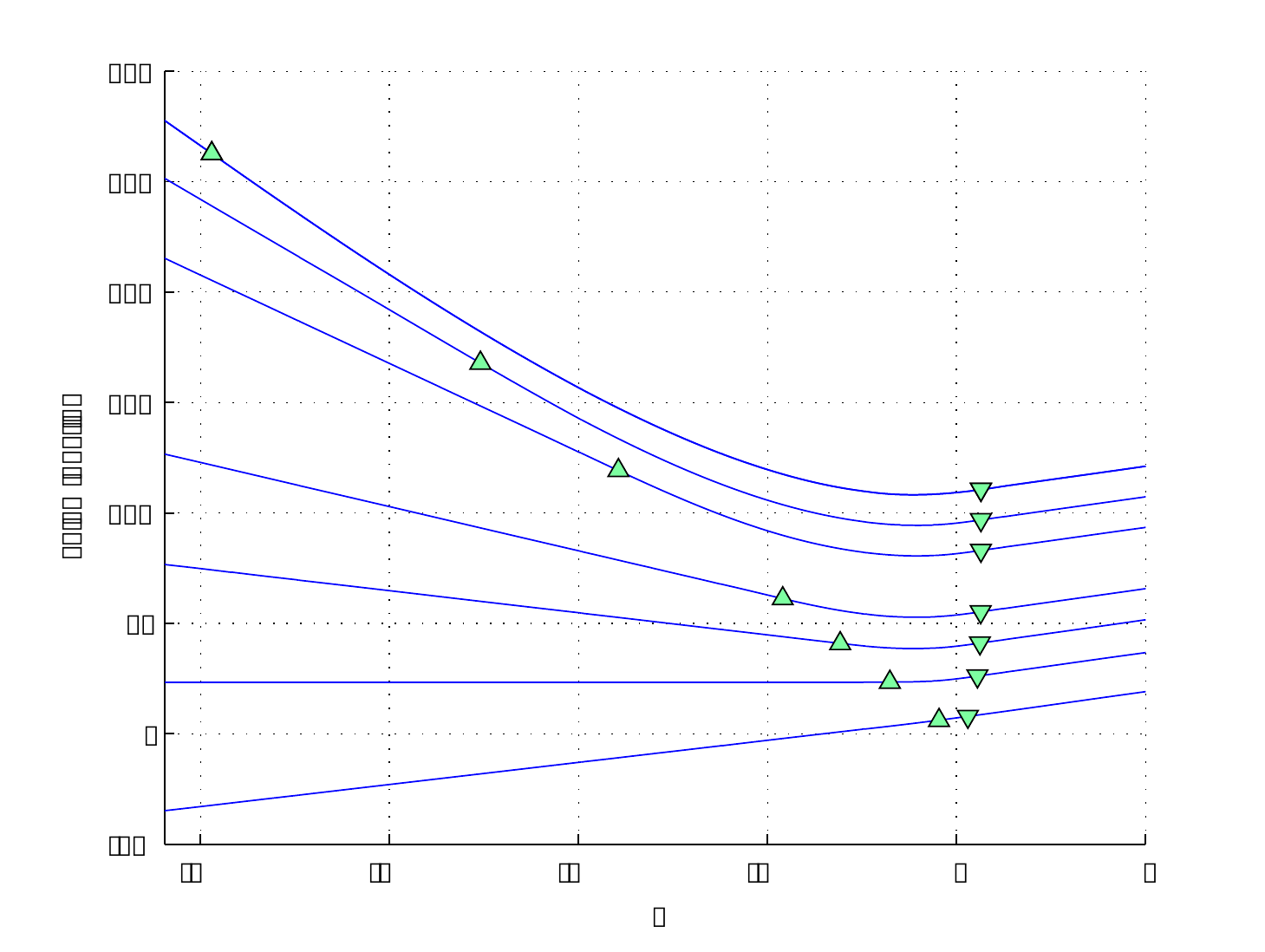} & \includegraphics[scale=0.58]{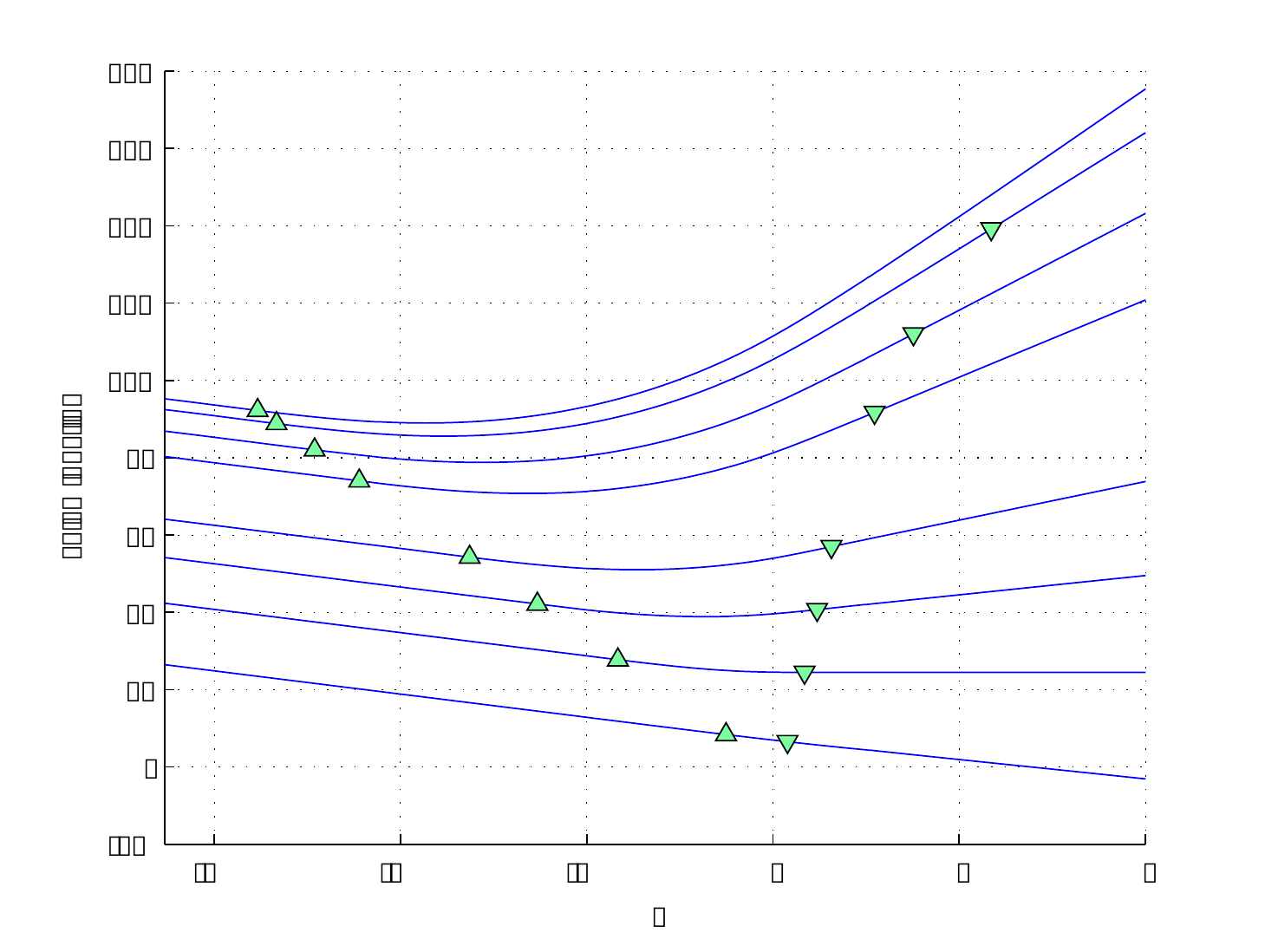}  \\
with respect to $C_U$ & with respect to $C_D$
\end{tabular}
\end{minipage}
\caption{Linear case: the plots of the value function for  $C_U \in  E \backslash \{35 \}$ with the common value of $C_D = 6$ (left) and also those for $C_D \in E$ with $C_U = 6$ (right).  The largest function in the right plot shows the case with $C_D = 35$ where the optimal barrier levels are given by $a^* = \underline{a}$ and $b^* = \infty$.}  \label{value_function_plot_linear}
\end{center}
\end{figure}

\section*{Acknowledgements}
We thank the anonymous referee for constructive comments and suggestions. E.\ J.\ Baurdoux was visiting CIMAT, Guanajuato when part of this work was carried out and he is grateful for their hospitality and support. K.\ Yamazaki is in part supported by MEXT KAKENHI grant numbers  22710143 and 26800092, JSPS KAKENHI grant number 23310103, the Inamori foundation research grant, and the Kansai University subsidy for supporting young scholars 2014.
\bibliographystyle{abbrv}
\bibliographystyle{apalike}

\bibliographystyle{agsm}
\bibliography{dual_model_bib}

\end{document}